\documentclass[11pt,reqno]{preprint}
\usepackage[full]{textcomp}
\usepackage[osf]{newtxtext}
\usepackage{comment}

\usepackage{amssymb}
\usepackage{mathtools}
\usepackage{hyperref}
\usepackage{breakurl}
\usepackage{mhenvs}
\usepackage{mhequ}
\usepackage{mhsymb}
\usepackage{booktabs}
\usepackage{tikz}
\usepackage{mathrsfs}
\usepackage{longtable}
\usepackage{halloweenmath}
\usepackage{scalerel}
\usepackage{cprotect}
\usepackage{multirow}
\usepackage{colortbl}
\usepackage{makecell}
\usepackage{resizegather}

\usepackage{wasysym}
\usepackage{centernot}
\usepackage{enumitem}
\usepackage{stackrel}

\usetikzlibrary{shapes.misc}
\usetikzlibrary{shapes.symbols}
\usetikzlibrary{shapes.geometric}
\usetikzlibrary{decorations}
\usetikzlibrary{decorations.markings}
\usetikzlibrary{patterns} 
\usetikzlibrary{snakes} 

\usepackage{subfig}
\usetikzlibrary{plotmarks}
\usetikzlibrary{decorations.pathreplacing}
\usetikzlibrary{decorations.pathmorphing}
\usetikzlibrary{calc}
\usetikzlibrary{external}
\usetikzlibrary{arrows.meta}

\newcommand{\Ceps}[1]{\mathcal{C}_{\eps}^{#1}}
\newcommand{\inner}[2]{\langle #1, #2 \rangle_{\eps}}
\newcommand{\rescaled}[3]{#1_{#2}^{#3}}

\newcommand\norm[1]{\left\lVert#1\right\rVert}

\let\oldskull\skull
\def\skull{\mathord{\oldskull}}

\DeclareMathAlphabet{\mathbbm}{U}{bbm}{m}{n}

\DeclareFontFamily{U}{BOONDOX-calo}{\skewchar\font=45 }
\DeclareFontShape{U}{BOONDOX-calo}{m}{n}{
  <-> s*[1.05] BOONDOX-r-calo}{}
\DeclareFontShape{U}{BOONDOX-calo}{b}{n}{
  <-> s*[1.05] BOONDOX-b-calo}{}
\DeclareMathAlphabet{\mcb}{U}{BOONDOX-calo}{m}{n}
\SetMathAlphabet{\mcb}{bold}{U}{BOONDOX-calo}{b}{n}
\DeclareMathAlphabet{\mathbcalboondox}{U}{BOONDOX-calo}{b}{n}

\setlist{noitemsep,topsep=4pt}

\makeatletter
\def\DeclareSymbol#1#2#3{%
	\expandafter\gdef\csname MH@symb@#1\endcsname{\tikzsetnextfilename{symbol#1}%
	\tikz[baseline=#2,scale=0.15,draw=symbols,line join=round,line cap=round]{#3}}%
	\expandafter\gdef\csname MH@symb@#1s\endcsname{\scalebox{0.75}{\tikzsetnextfilename{symbol#1}%
	\tikz[baseline=#2,scale=0.15,draw=symbols,line join=round,line cap=round]{#3}}}%
	\expandafter\gdef\csname MH@symb@#1ss\endcsname{\scalebox{0.65}{\tikzsetnextfilename{symbol#1}%
	\tikz[baseline=#2,scale=0.15,draw=symbols,line join=round,line cap=round]{#3}}}%
	}
\def\<#1>{\ifthenelse{\boolean{mmode}}{\mathchoice{\csname MH@symb@#1\endcsname}{\csname MH@symb@#1\endcsname}{\csname MH@symb@#1s\endcsname}{\csname MH@symb@#1ss\endcsname}}{\csname MH@symb@#1\endcsname}}
\makeatother

\makeatletter 
\newcommand*{\bigcdot}{}
\DeclareRobustCommand*{\bigcdot}{%
  \mathbin{\mathpalette\bigcdot@{}}%
}
\newcommand*{\bigcdot@scalefactor}{.5}
\newcommand*{\bigcdot@widthfactor}{1.15}
\newcommand*{\bigcdot@}[2]{%
  \sbox0{$#1\vcenter{}$}
  \sbox2{$#1\cdot\m@th$}%
  \hbox to \bigcdot@widthfactor\wd2{%
    \hfil
    \raise\ht0\hbox{%
      \scalebox{\bigcdot@scalefactor}{%
        \lower\ht0\hbox{$#1\bullet\m@th$}%
      }%
    }%
    \hfil
  }%
}
\makeatother

\newcommand{\cut}{\mathfrak{C}}

\newcommand{\mrd}{\mathop{}\!\mathrm{d}}

\newcommand{\mcC}{\mathcal{C}}

\newcommand{\mcS}{\mathcal{S}}

\newcommand{\T}{\mathbf{T}}

\newcommand{\bonds}{{\mathbf{B}}}
\newcommand{\obonds}{{\overline{\mathbf{B}}}}

\newcommand{\plaq}{{\mathbf{P}}}
\newcommand{\oplaq}{\overline{\mathbf{P}}}

\def\${|\!|\!|}

\def\id{\mathrm{id}}

\newcommand{\mfR}{\mathfrak{R}}

\newcommand{\mfp}{\mathfrak{p}}

\def\cC{\mathscr{C}}

\def\combplus[#1,#2,#3,#4]{\binom{#1\ {\scriptstyle #4} }{#2\ #3}}

\def\singlescalegenvert[#1,#2]{\hat{H}^{#2}_{#1}}
\def\multiscalegenvert[#1,#2]{H^{#2}_{#1}}

\def\nr[#1]{\tilde{N}[#1]} 
\def\inn[#1]{\mathring{N}[#1]}
\def\nrinn[#1]{\hat{N}_{#1}} 
\def\nrmod[#1,#2]{\tilde{N}_{#1}(#2)}
\def\nrinnmod[#1,#2]{\hat{N}_{#1}(#2)}

\def\ident[#1]{\underline{#1}}

\def\mylink#1#2{\mathrel{\vbox{\offinterlineskip\ialign{%
    \hfil##\hfil\cr
    $\scriptscriptstyle#1$\cr
    \noalign{\kern0.1ex}
    $#2$\cr
}}}}
\def\mysublink[#1]#2#3{\mathrel{\vbox{\offinterlineskip\ialign{%
    \hfil##\hfil\cr
    $\scriptscriptstyle#2$\cr
    \noalign{\kern0.1ex}
    $#3$\cr
    \noalign{\kern-0.2ex}
    \smash{\raisebox{-\height}{\hbox{$\scriptscriptstyle #1$}}}\cr
    \noalign{\kern0.2ex}
}}}}

\def\fon[#1]{\cC_{#1}}

\def\mincompproj[#1]{\mfp_{#1}}

\def\Proj_#1{\mathop{\mathrm{Proj}_{#1}}}

\def\negrenorm[#1]{\mfR_{#1}}
\def\topnegrenorm[#1]{\overline{\mfR}_{#1}}

\def\quotedge[#1]{E^{q}_{#1}}

\def\posrenorm[#1]{\mcC_{#1}}
\def\topposrenorm[#1]{\overline{\mcC_{#1}}}
\def\cutsmod[#1]{\mathbb{C}_{+,#1}}

\def\fullcutsmod[#1]{\cut_{#1}}

\colorlet{symbols}{blue!30!black!50}
\colorlet{testcolor}{green!60!black}
\colorlet{darkblue}{blue!60!black}
\colorlet{darkgreen}{green!60!black}
\definecolor{darkergreen}{rgb}{0.0, 0.5, 0.0}

\definecolor{purple}{rgb}{0.55,0.05,0.8}

\def\symbol#1{{\mathbf{#1}}}

\def\1{\mathbf{\symbol{1}}}

\makeatletter
\pgfdeclareshape{crosscircle}
{
 \inheritsavedanchors[from=circle] 
 \inheritanchorborder[from=circle]
 \inheritanchor[from=circle]{north}
 \inheritanchor[from=circle]{north west}
 \inheritanchor[from=circle]{north east}
 \inheritanchor[from=circle]{center}
 \inheritanchor[from=circle]{west}
 \inheritanchor[from=circle]{east}
 \inheritanchor[from=circle]{mid}
 \inheritanchor[from=circle]{mid west}
 \inheritanchor[from=circle]{mid east}
 \inheritanchor[from=circle]{base}
 \inheritanchor[from=circle]{base west}
 \inheritanchor[from=circle]{base east}
 \inheritanchor[from=circle]{south}
 \inheritanchor[from=circle]{south west}
 \inheritanchor[from=circle]{south east}
 \inheritbackgroundpath[from=circle]
 \foregroundpath{
   \centerpoint%
   \pgf@xc=\pgf@x%
   \pgf@yc=\pgf@y%
   \pgfutil@tempdima=\radius%
   \pgfmathsetlength{\pgf@xb}{\pgfkeysvalueof{/pgf/outer xsep}}%
   \pgfmathsetlength{\pgf@yb}{\pgfkeysvalueof{/pgf/outer ysep}}%
   \ifdim\pgf@xb<\pgf@yb%
     \advance\pgfutil@tempdima by-\pgf@yb%
   \else%
     \advance\pgfutil@tempdima by-\pgf@xb%
   \fi%
   \pgfpathmoveto{\pgfpointadd{\pgfqpoint{\pgf@xc}{\pgf@yc}}{\pgfqpoint{-0.707107\pgfutil@tempdima}{-0.707107\pgfutil@tempdima}}}
   \pgfpathlineto{\pgfpointadd{\pgfqpoint{\pgf@xc}{\pgf@yc}}{\pgfqpoint{0.707107\pgfutil@tempdima}{0.707107\pgfutil@tempdima}}}
   \pgfpathmoveto{\pgfpointadd{\pgfqpoint{\pgf@xc}{\pgf@yc}}{\pgfqpoint{-0.707107\pgfutil@tempdima}{0.707107\pgfutil@tempdima}}}
   \pgfpathlineto{\pgfpointadd{\pgfqpoint{\pgf@xc}{\pgf@yc}}{\pgfqpoint{0.707107\pgfutil@tempdima}{-0.707107\pgfutil@tempdima}}}
 }
}
\makeatother

\makeatletter
\pgfdeclareshape{crossrectangle}
{
 \inheritsavedanchors[from=rectangle] 
 \inheritanchorborder[from=rectangle]
 \inheritanchor[from=rectangle]{north}
 \inheritanchor[from=rectangle]{north west}
 \inheritanchor[from=rectangle]{north east}
 \inheritanchor[from=rectangle]{center}
 \inheritanchor[from=rectangle]{west}
 \inheritanchor[from=rectangle]{east}
 \inheritanchor[from=rectangle]{mid}
 \inheritanchor[from=rectangle]{mid west}
 \inheritanchor[from=rectangle]{mid east}
 \inheritanchor[from=rectangle]{base}
 \inheritanchor[from=rectangle]{base west}
 \inheritanchor[from=rectangle]{base east}
 \inheritanchor[from=rectangle]{south}
 \inheritanchor[from=rectangle]{south west}
 \inheritanchor[from=rectangle]{south east}
 \inheritbackgroundpath[from=rectangle]
 \foregroundpath{
   \centerpoint%
   \pgf@xc=\pgf@x%
   \pgf@yc=\pgf@y%
   \pgfutil@tempdima=\radius%
   \pgfmathsetlength{\pgf@xb}{\pgfkeysvalueof{/pgf/outer xsep}}%
   \pgfmathsetlength{\pgf@yb}{\pgfkeysvalueof{/pgf/outer ysep}}%
   \ifdim\pgf@xb<\pgf@yb%
     \advance\pgfutil@tempdima by-\pgf@yb%
   \else%
     \advance\pgfutil@tempdima by-\pgf@xb%
   \fi%
   \pgfpathmoveto{\pgfpointadd{\pgfqpoint{\pgf@xc}{\pgf@yc}}{\pgfqpoint{-0.707107\pgfutil@tempdima}{-0.707107\pgfutil@tempdima}}}
   \pgfpathlineto{\pgfpointadd{\pgfqpoint{\pgf@xc}{\pgf@yc}}{\pgfqpoint{0.707107\pgfutil@tempdima}{0.707107\pgfutil@tempdima}}}
   \pgfpathmoveto{\pgfpointadd{\pgfqpoint{\pgf@xc}{\pgf@yc}}{\pgfqpoint{-0.707107\pgfutil@tempdima}{0.707107\pgfutil@tempdima}}}
   \pgfpathlineto{\pgfpointadd{\pgfqpoint{\pgf@xc}{\pgf@yc}}{\pgfqpoint{0.707107\pgfutil@tempdima}{-0.707107\pgfutil@tempdima}}}
 }
}
\makeatother

\colorlet{greennode}{green!50!black}
\colorlet{rednode}{red!50!black}
\colorlet{lbluenode}{blue!25}
\colorlet{dbluenode}{blue}
\colorlet{orangenode}{orange}

\definecolor{connection}{rgb}{0.7,0.1,0.1}

\tikzset{
dot/.style={circle,fill=black,inner sep=0pt, minimum size=1mm},
root/.style={circle,fill=black!50,inner sep=0pt, minimum size=3mm},
       var/.style={circle,fill=black!10,draw=black,inner sep=0pt, minimum size=1.6mm},
       delta/.style={densely dotted},
       var1/.style={rectangle,fill=black!10,draw=black,inner sep=0pt, minimum size=1.6mm},
       var2/.style={diamond,fill=black!10,draw=black,inner sep=0pt, minimum size=2mm},
       kernel/.style={semithick,shorten >=2pt,shorten <=2pt},
      kernel1/.style={postaction={decorate,decoration={markings,mark=at position 0.45 with {\draw[-] (0,-0.08) -- (0,0.08);}}}},
       kernels/.style={snake=snake,segment amplitude=1pt,segment length=4pt},
       rho/.style={densely dashed,semithick,shorten >=2pt,shorten <=2pt},
          testfcn/.style={dotted,semithick,shorten >=2pt,shorten <=2pt},
          tau/.style={circle,inner sep=1pt,draw=black,fill=white,text=black,thin},
       renorm/.style={shape=circle,fill=white,inner sep=1pt},
       labl/.style={shape=rectangle,fill=white,inner sep=1pt},
       xi/.style={very thin,circle,fill=lbluenode,draw=symbols,inner sep=0pt,minimum size=1.2mm},
       xi1/.style={very thin,rectangle,fill=lbluenode,draw=symbols,inner sep=0pt,minimum size=1.2mm},
       xi2/.style={very thin,diamond,fill=lbluenode,draw=symbols,inner sep=0pt,minimum size=1.6mm},
       xigreen/.style={very thin,circle,fill=greennode,draw=symbols,inner sep=0pt,minimum size=1.2mm},
       xigreen1/.style={very thin,rectangle,fill=greennode,draw=symbols,inner sep=0pt,minimum size=1.2mm},
       xired/.style={very thin,circle,fill=rednode,draw=symbols,inner sep=0pt,minimum size=1.2mm},
       xilblue/.style={very thin,circle,fill=lbluenode,draw=symbols,inner sep=0pt,minimum size=1.2mm},
       xidblue/.style={very thin,circle,fill=dbluenode,draw=symbols,inner sep=0pt,minimum size=1.2mm},
       xiorange/.style={very thin,circle,fill=orangenode,draw=symbols,inner sep=0pt,minimum size=1.2mm},
       xix/.style={crosscircle,fill=lbluenode,draw=symbols,inner sep=0pt,minimum size=1.2mm},
xix-green-red/.style={circle, fill=greennode!70!white,draw=rednode,inner sep=0pt,minimum size=1.6mm,append after command={node [inner sep=0pt,minimum size=0.8mm,thick, draw = rednode, cross out]{}}},
xix-green-red1/.style={rectangle, fill=greennode!70!white,draw=rednode,inner sep=0pt,minimum size=1.5mm,append after command={node [inner sep=0pt,minimum size=1mm,thick, draw = rednode, cross out]{}}},
	xib/.style={very thin,circle,fill=lbluenode,draw=symbols,inner sep=0pt,minimum size=1.6mm},
	xib1/.style={very thin,rectangle,fill=lbluenode,draw=symbols,inner sep=0pt,minimum size=1.6mm},
	xie/.style={very thin,circle,fill=greennode,draw=symbols,inner sep=0pt,minimum size=1.6mm},
	xid/.style={very thin,circle,fill=lbluenode,draw=symbols,inner sep=0pt,minimum size=1.6mm},
	xibx/.style={crosscircle,fill=lbluenode,draw=symbols,inner sep=0pt,minimum size=1.6mm},
	kernels2/.style={ultra thick,draw=symbols,segment length=12pt},
	not/.style={thin,regular polygon, regular polygon sides=3,draw=connection,fill=connection,inner sep=0pt,minimum size=1.2mm},
	notlblue/.style={thin,regular polygon, regular polygon sides=3,draw=lbluenode,fill=lbluenode,inner sep=0pt,minimum size=1.2mm},
	notorange/.style={thin,regular polygon, regular polygon sides=3,draw=orangenode,fill=orangenode,inner sep=0pt,minimum size=1.2mm},
	notgreen/.style={thin,regular polygon, regular polygon sides=3,draw=greennode,fill=greennode,inner sep=0pt,minimum size=1.2mm},
	>=stealth,
 }

\newtheorem{assumption}[lemma]{Assumption}

\colorlet{darkblue}{blue!90!black}
\colorlet{darkred}{red!90!black}
\colorlet{darkgreen}{green!70!black}
\let\comm\comment

\def\hao#1{\comm[darkred]{HS: #1}}
\def\yahui#1{\comm[blue]{YQ: #1}}

\def\s{\mathfrak{s}}

\newcommand{\e}{\varepsilon}

\def\${|\!|\!|}
\def\Wick#1{\colon\!\! #1 \! \colon}

\def\?{{\color{red}?}}

\def\id{\mathrm{id}}

\def\id{\mathrm{id}}

\def\dash{\leavevmode\unskip\kern0.18em--\penalty\exhyphenpenalty\kern0.18em}
\def\slash{\leavevmode\unskip\kern0.15em/\penalty\exhyphenpenalty\kern0.15em}

\usepackage{stmaryrd}

\newtheorem{example}[lemma]{Example}

\long\def\yahuiText#1{{\color{darkgreen}Yahui:\ #1}}

\let\basepoint\logof
\def\logof{\mathord{{\basepoint}}} 

\title{Scaling limit of the 3D abelian Yang--Mills Langevin dynamics}
\author{Ilya Chevyrev$^1$, Yahui Qu$^2$, Hao Shen$^3$}

\institute{SISSA, Trieste, Italy \and
		University of Wisconsin-Madison, Madison, USA \and University of Chicago, Chicago, USA\\
		\email{ichevyrev@gmail.com, yqu45@wisc.edu, \\pkushenhao@gmail.com}}

\begin{document}
\maketitle
\begin{abstract}
We study the continuum scaling limit of the Langevin dynamics for three-dimensional U(1) lattice Yang--Mills theory. The model is defined on the discrete 3D torus   with a general class of plaquette actions that are suitably normalized, including Wilson, Manton, and Villain actions. Under the weak-coupling scaling and in the DeTurck gauge, we prove that, the rescaled logarithmic field converges to the solution of the one-form stochastic heat equation in probability. In particular, the limiting dynamics are universal and do not depend on the higher-order details of the plaquette action.
\\[.4em]
\noindent {\small\textit{MSC 2020 classification:} 60H15 (Primary), 60L30, 81T13, 81T27 (Secondary)}
\end{abstract}
\setcounter{tocdepth}{2}

\tableofcontents

\section{Introduction}
\label{sec:Intro}

In this paper, we study the scaling limit of the Langevin dynamics
associated with the three-dimensional $U(1)$ lattice Yang--Mills models.
In the continuum setting, given a Lie group $G$, the Euclidean Yang--Mills action functional for a connection $A$ is given by $S(A)=\frac12 \int |F_A|^2 dx$. Here, the connection $A$ is a $1$-form taking values in the associated Lie algebra, and $F_A=dA+A\wedge A$ is 
the curvature $2$-form. 

In the abelian case, where $G=U(1)$,  the connection $A$ is
 a real-valued $1$-form, and the curvature simplifies to 
$F_A=dA$.
The action then becomes quadratic $S(A)=\frac12 \int |dA|^2 dx$.
In this case, the formal Yang--Mills measure is $e^{-S(A)} DA$, which can be interpreted as a Gaussian measure under a suitable gauge. 
A fundamental property of this theory is its invariance under gauge transformations,
namely, mapping  $A\mapsto A+d\phi$
for a scalar function $\phi$ leaves the curvature $dA$, and thus the action, unchanged.

Lattice gauge theory is the discretization of the Yang-Mills model
which preserves the gauge symmetry at the discrete level. The connection $A$ is replaced by a field defined on the lattice edges, which takes values in the Lie group $G$.
On a finite lattice, the corresponding lattice Gibbs measure is always well-defined.

A dynamic approach to constructing the continuum limits of such theories is provided by stochastic quantization. This method defines a Langevin equation whose invariant measure formally coincides with the target Euclidean quantum field theory measure. In recent years, the stochastic quantization of the Yang--Mills measure has been successfully constructed in two and three dimensions
\cite{CCHS_2D,CCHS_3D}  (see also \cite{Chevyrev22YM}). Furthermore, the convergence of both the measures and the dynamics of a large class of discrete lattice gauge theories to their continuum limits has been established in the two-dimensional setting \cite{chevyrev2023invariant}.

It is an open problem to extend the convergence result \cite{chevyrev2023invariant} from two to three dimensions.
The three-dimensional setting presents profound analytical challenges since it is much more singular.
In this paper, we make the first step towards this goal by restricting our attention to the abelian $U(1)$ case. 

We now introduce the rigorous setup.
We consider lattice gauge theory with abelian Lie group $U(1)$, defined on a periodic lattice $\Lambda_L = \Z^d/(L\Z^d)$ of size $L = 2^{N}$, $N\geq 1$.
We denote by $\bonds$ the set of oriented edges; $\obonds$, the positively oriented edges; and $\plaq$ the plaquettes. We refer to Section~\ref{sec:Notation} for precise definitions of these notations.

For an oriented edge $e=(x,y)$, write $\bar e=(y,x)$ for the same edge with opposite orientation. A compact $U(1)$-connection is a map
\[
    U:\bonds\to U(1)
\]
satisfying the orientation convention
$ U(\bar e)=U(e)^{-1}$.
Equivalently, writing $U(e)=\exp(i\vartheta(e))$, we regard $\vartheta$ as an $\R/(2\pi\Z)$-valued $1$-form satisfying
\[
    \vartheta(\bar e)=-\vartheta(e)
    \qquad \text{in } \R/(2\pi\Z).
\]
Here $-\vartheta(e)$ denotes the additive inverse of the class $\vartheta(e)$ in the quotient group $\R/(2\pi\Z)$. When a real representative is needed, we choose the principal representative on $\obonds$ and extend to all oriented edges by the above antisymmetry convention.

We identify functions on the circle $U(1)$ with $2\pi$-periodic functions on the real line $\R$.
Let $s \colon U(1) \to \R\cup\{\infty\}$ be a measurable function such that $s(x)=s(x^{-1})$. 
Equivalently, we can write $s(e^{i\alpha})=h(\alpha)$ for a $2\pi$-periodic even function $h$.
(See Section~\ref{sec:theorem} for the precise conditions on $s$ and $h$.)
For a plaquette $p=(e^{(1)},e^{(2)},e^{(3)},e^{(4)})$, define
\[
 U(\partial p) = U(e^{(1)}) U(e^{(2)}) U(e^{(3)}) U(e^{(4)})
\]
and
\[
\vartheta(\partial p)
= \vartheta(e^{(1)})+\vartheta(e^{(2)})+\vartheta(e^{(3)})+\vartheta(e^{(4)})
\qquad \text{in } \R/(2\pi\Z).
\]
Since $h$ is $2\pi$-periodic, $h[\vartheta(\partial p)]$ is well-defined, independently of the choice of representatives.

Let $\mathfrak{G}=\mathrm{Map}(\Lambda,U(1))$ denote the group of gauge transformations. It acts on lattice connections by
\[
U^g(x,y) = g(x) U(x,y) g(y)^{-1},
\qquad g\in \mathfrak{G}.
\]
This action preserves the orientation convention $U(\bar e)=U(e)^{-1}$. For a lattice connection $U$, we write
\[
    [U]=\{U^g:g\in\mathfrak{G}\}
\]
for its gauge orbit. We say $\bar U$ and $U$ are gauge equivalent and write $\bar U\sim U$ if $\bar U\in[U]$.
If $g(x)=\exp(i\phi(x))$, then in compact angle variables
\[
    \vartheta^g(x,y)=\vartheta(x,y)+\phi(x)-\phi(y)
    \qquad \text{in } \R/(2\pi\Z).
\]
Notice that $U(\partial p)$, and thus the action functional, is invariant under gauge transformations:
\[
U^g(\partial p) = g(x)U(\partial p ) g(x)^{-1}.
\]
In the abelian case, this is simply $U^g(\partial p)=U(\partial p)$.

Define the action functional of the $U(1)$ lattice gauge theory by
\begin{equ}[e:def-CSU]
\mcS(U)
=
 \frac14 \sum_{p\in\plaq} s[U(\partial p)]
=
 \frac14\sum_{p\in\plaq} h[\vartheta(\partial p)]\;.
\end{equ}
Let $\beta>0$ denote the lattice inverse coupling. Consider the probability measure $\mu_{N,\beta}$ on compact lattice connections, identified with $U(1)^{\obonds}$ by restriction to positively oriented edges, defined by
\begin{equ}\label{eq:mu_N}
\mu_{N,\beta}(\mrd U) \eqdef \tilde{Z}_{\beta}^{-1} e^{-\beta \mcS(U)} \mrd U\;,
\end{equ}
where $\mrd U$ is the Haar (i.e., uniform) measure on $ U(1)^{\obonds} $ and
 $\tilde Z_{\beta}$ is the normalization constant which makes $\mu_{N,\beta}$ a probability measure
 ($\mu_{N,\beta}$ and $\tilde{Z}_{\beta}$ are well-defined whenever $s$ is bounded).
 
 The above $U(1)$ lattice gauge theory, in particular in 3D, has a rich history. Early foundational work by Gross \cite{MR728862} established the convergence of the model to a Gaussian continuum limit. 
 Specifically, for a class of models (see Remark~\ref{rem:Gross-assumptions}), Gross showed that the discrete curvature $h'(\vartheta(\partial p))$,
 which can be decomposed into an exact part (``electric sector'') and a co-exact part (``magnetic sector''), when projected to the first sector, converges to a free Gaussian field.
The scaling in \cite{MR728862} is the same as ours in \eqref{e:scaling-x} below, 
written as $\beta^{-1}=\eps g^2$, where $g$ is fixed.

Varying the single-plaquette function $s$, or equivalently $h$, in \eqref{eq:mu_N} defines a class of models.
We give some canonical examples  (see also \cite[Section~2]{MR728862}):
\begin{itemize}
\item
\textbf{Wilson model:} In general, for the $U(N)$ model, the Wilson model is defined by choosing the function $s$ on $U(N)$ as 
$s(x)=  \Re \mbox{Tr}(\id-x)$. 
In the $U(1)$ case, one has
\begin{equ}[e:Wilson-h]
s(x)= \Re (1-x)=  \Re (1-e^{i\alpha}),
\qquad
\mbox{thus}
\quad
 h(\alpha)= 1-\cos(\alpha).
\end{equ}
More generally, we can consider
\begin{equ}[e:Wilson-h-gen]
s(x)= m^{-2}\Re (1-x^m)=  m^{-2} \Re (1-e^{im \alpha}),
\end{equ}
namely,
 $h(\alpha) = m^{-2} (1-\cos(m\alpha))$ for $m=1,2,3,\cdots$, or any convex finite sum of these.
Note that the factor $m^{-2} $ is such that $h''(0)=1$.

\item
\textbf{Manton model: }
For general Lie group $G$ endowed with distance $|\,\cdot\,|_G$, the Manton model is defined, up to the normalization of the Yang--Mills action used here, by the squared distance to the identity. In the $U(1)$ case we take
\[
h(\alpha)
= \frac12 \min_{k\in \Z} |\alpha-2\pi k|^2\;.
\]
In particular, when $|\alpha|<\pi$, one has $h(\alpha)=\frac12\alpha^2$, and hence $h''(0)=1$. Note that in this case $h$ is not differentiable at $(2k+1)\pi$ for $k\in \Z$.
\item 
\textbf{Villain model:}
 We define  an even, $2\pi$-periodic smooth function
$h=h_\lambda$ for $\lambda \geq 1$ by
\begin{equ}[e:Villain-h]
\exp\{-c_\lambda\lambda h_\lambda(\alpha)\}
    =
    \sum_{n\in\Z}
    \exp\left\{
        -\frac{\lambda}{2}(\alpha-2\pi n)^2
    \right\},
\end{equ}
where $c_\lambda>0$ is chosen so that
\[
    h_\lambda''(0)=1.
\]
More explicitly, as $\lambda\to\infty$,
\[
    c_\lambda
   =
    1
    -
    \lambda
    \frac{
        \sum_{n\in\Z}
        (2\pi n)^2 e^{-2\pi^2\lambda n^2}
    }{
        \sum_{n\in\Z}
        e^{-2\pi^2\lambda n^2}
    } 
     =
    1-8\pi^2\lambda e^{-2\pi^2\lambda}
    +O\bigl(\lambda e^{-4\pi^2\lambda}\bigr).
\]
Thus, under the scaling \eqref{e:scaling-theta} below and taking
$ c_{\lambda}\lambda=2\beta$
in the Gibbs convention \eqref{eq:mu_N}, one has $\lambda =  2\beta + O(\beta^2 e^{-4\pi^2\beta})$.
\end{itemize}

\begin{remark}
Alternatively, if one takes the more naive choice $\exp \{-2\beta h_{\beta}(\alpha)\} = \sum_{n\in\Z}
    e^{ -\beta (\alpha-2\pi n)^2}$ in place of \eqref{e:Villain-h},
then the second derivative of $h_\beta$ at the origin is
$c_{2\beta}$ rather than exactly $1$. Since
$ c_{2\beta}-1  =  O(\beta e^{-4\pi^2\beta})$,
this only introduces exponentially small perturbations in the linear
coefficient and in the higher Taylor coefficients of the Langevin equation.
In dimension $3$, we will use the scaling
 $ \beta =\bar\beta\,\eps^{-1}$, so
these errors are smaller than any power of $\eps$. They could therefore
be absorbed in the estimates below and we would obtain the same limit.
\end{remark}

We study the scaling limits of the dynamics of these models.\footnote{Here we rescale the large lattice $\Lambda_L$ to $\T^d_\e$, which is close to the setup in \cite{MR728862}, but slightly different from  \cite{chevyrev2023invariant} which simply starts with discrete models on $\T^d_\e$.}
Assuming that the dimension $d$ is arbitrary for the moment, we rescale the underlying space into a unit torus of lattice spacing $\e>0$: 
\begin{equ}[e:scaling-x]
\Lambda_L \to \T^d_\e ,\qquad x\to x/L
\end{equ}
where $\e=L^{-1}$, see 
Section~\ref{sec:Notation} for precise definitions of these notations.
Moreover, we choose the canonical weak-coupling scaling of the lattice inverse coupling and work in a local chart near the identity:
\begin{equ}[e:scaling-theta]
\beta=\beta_\e \eqdef \bar\beta\,\e^{d-4},
\qquad
\vartheta=\e\theta\;,
\qquad
e^{i\vartheta}=e^{i\e\theta},
\end{equ}
where $\bar\beta>0$ is fixed. Here $\theta$ is a real-valued $1$-form in the chosen chart; on positively oriented edges it takes values in $(-\pi/\e, \pi/\e]$. \footnote{So on negatively oriented edges $\theta$ takes values in $[-\pi/\e, \pi/\e)$. The endpoints of this interval will play no role, since in the analysis of the dynamics we work up to the stopping time in a smaller interval.}
For a real-valued $1$-form $\theta$ on the oriented edges of $\T^d_\e$, we define the ``discrete exterior differential'' (see also Section~\ref{sec:Notation}) by
\[
\mrd_\e \theta(p) = \e^{-1} \theta(\partial p).
\]

Heuristically, under the above scaling, the Manton model has Gibbs weight
\[
\exp\Big\{
-\bar\beta\e^{d-4}
\sum_{p\in\oplaq_\e}
\e^2 (\theta(\partial p))^2
\Big\}
\approx
\exp\Big\{
-\bar\beta \int_{\T^d}
 |\mrd_\e \theta|^2 \,dx
\Big\},
\]
where $\oplaq_\e$ is the set of positively oriented plaquettes and
$\e^d\sum_{p\in\oplaq_\e}$ is replaced by the corresponding continuum integral. Similarly, the Wilson model has Gibbs weight
\begin{equs}{}
&\exp\Big\{
-2\bar\beta\e^{d-4} \sum_{p\in\oplaq_\e}
\Big(
\frac{\eps^2}2 (\theta(\partial p))^2
-\frac{\eps^4}{4!} (\theta(\partial p))^4
+\frac{\eps^6}{6!} (\theta(\partial p))^6
+\cdots
\Big)
\Big\}
\\
&
\approx
\exp\Big\{
-2\bar\beta
\sum_{1\leq i < j \leq d}\int_{\T^d}
\Big(
 \frac12 |(\mrd_\e \theta)_{ij}|^2
 - \frac{\e^4}{4!} |(\mrd_\e\theta)_{ij}|^4
+   \frac{\e^8}{6!} |(\mrd_\e\theta)_{ij}|^6
 +\cdots \Big) dx
\Big\} \;.
\end{equs}

The fixed constant $\bar\beta$ only changes the coefficient of the limiting gauge-fixed heat operator. 
To make the notation for the rest of the paper less heavy, we normalize
\begin{equation}\label{eq:barbeta1}
    \bar\beta=1.
\end{equation}

Note that the leading term is the quadratic term $|\mrd_\e\theta|^2$.
This indicates that the corresponding lattice dynamics should converge to 
the limit (after suitable gauge fixing) given by the
stochastic heat equation 
\begin{equ}[e:SHE]
\partial_t \theta = \Delta \theta + \xi \;.
\end{equ}

Our main result is that this is indeed true, see Theorem~\ref{thm:main}.
Let us emphasize that,
even though the stochastic heat equation is well-posed in all dimensions,
 the dimension $d$ still plays a crucial role.
Indeed, suppose that after suitable gauge fixing $\theta$ has the regularity of the Gaussian free field, namely $\CC^{-(d-2)/2-}$. Assigning (parabolic) homogeneity $-1$ to the derivative $\mrd_\e$ and $1$ to a factor of $\e$, the terms in the Wilson action have the formal homogeneities
\begin{equ}[e:scaling-meas]
| \mrd_\e \theta |^2 \sim -d,
\qquad
\e^4 |\mrd_\e \theta|^4 \sim -2d+4,
\qquad
\e^8 |\mrd_\e\theta|^6 \sim -3d+8, \quad \mbox{etc.}
\end{equ}
We immediately see that if $d<4$, the above numbers are increasing, 
and if $d=4$, they stay at $-4$,
and if $d>4$, they are decreasing.
In other words, we do have a ``subcritical'' regime $d<4$
for the scaling limit problem.

In the next section, we state our main theorem. To that end, we first introduce our notation precisely.

\section{Main result}

\subsection{Notation}
\label{sec:Notation}


Let $\T^d=\R^d/\Z^d$ denote the $d$-dimensional torus, which we identify as a set with $[0,1)^d$.
Let $\T^d_\e \subset \T^d$
denote the discrete torus with lattice spacing $\e=2^{-N}$ for some $N\ge 1$.

We denote $[d]=\{1,\ldots, d\}$ and write $\{e_j\}_{j\in [d]}$ for the
set of unit basis vectors in $\R^d$.
For $j\in [d]$, we will write 
$\eps_j$ as shorthand for $\eps e_j$.
Let
\[
\bonds_\e \eqdef \{(x,x\pm \e_j)\in \T^d_\e\times \T^d_\e \,:\, x\in\T^d_\e\,,\, j\in [d]\}
\]
denote the set of oriented edges on $\T^d_\e$.
For $e=(x,y)\in\bonds_\e$, write $\bar e=(y,x)$ for the same edge with the opposite orientation.
We also write 
$\obonds_{\e,j} \eqdef \{(x,x+\eps_j)\,:\, x\in\T^d_\e\}$
and $\obonds_\e = \cup_{j\in[d]} \obonds_{\e,j}$ 
for the set of positively oriented edges.

An oriented plaquette (or simply a plaquette) is a tuple $p=(x,a,b)$, where $x\in \T^d_\e$ is a lattice site, 
$a,b \in \{\pm\e_j\}_{j\in [d]}$
where $a\neq \pm b$. 
We will also use the alternative notation 
\begin{equ}
p=(x_1,x_2,x_3,x_4)\eqdef (x,x+a,x+a+b,x+b)
\end{equ}
or
$p=(e^{(1)},e^{(2)},e^{(3)},e^{(4)})$ where $e^{(i)}=(x_i,x_{i+1})$ for $i=1,\ldots, 4$ with $x_5\eqdef x_1$.
We will call $x$ the base point of $p$, and say that $\overleftarrow{p}\eqdef (x,b,a)$ is the plaquette with reverse orientation.
We denote by $\plaq\equiv \plaq_\e$ the set of all plaquettes.
We also define $\oplaq_\e \subset \plaq_\e$ to be  the set of plaquettes of the form 
$p=(x,\e_i,\e_j)$, where $x\in \T^d_\e$ and $1\le i < j \le d$.
We have $|\plaq_\e|=8|\oplaq_\e|$.

\medskip

Let us also define the pre-scaling lattice as follows.

For $L=2^N\geq 1$, let $\Lambda_L$ denote the discrete $d$-dimensional torus with unit spacing 
which we identify as a set with $\{0,\ldots, L-1\}^d$.
Let $\bonds_L$, $\obonds_{L,j}$, $\obonds_L$ be  
defined as above but with $\eps$ replaced by $1$ and $\T^d_\e$ replaced by 
$\Lambda_L$.
Let $\plaq_L$ be the set of all plaquettes for $\Lambda_L$, defined as above
but with $x\in \Lambda_L$ and $\e=1$, and let $\oplaq_L\subset\plaq_L$ be the corresponding set of positively oriented plaquettes.

\medskip

We define $0$-forms to be functions $f\colon \T^d_\e \to \R$, and refer to them simply as ``functions''. A real-valued $1$-form is a function $\theta\colon\bonds_\e\to\R$ satisfying
\[
    \theta(\bar e)=-\theta(e).
\]
Equivalently, it is determined by its components on positively oriented edges,
\[
    \theta_j(x)\eqdef \theta(x,x+\eps_j),
    \qquad j\in[d].
\]
A compact angle field is an $\R/(2\pi\Z)$-valued $1$-form $\vartheta$ satisfying the same antisymmetry convention in the quotient group,
\[
    \vartheta(\bar e)=-\vartheta(e)
    \qquad \text{in } \R/(2\pi\Z).
\]
A real-valued $2$-form is a function $F\colon \plaq_\e \to \R$ which is invariant
under cyclic permutations of the vertices of a plaquette and changes
sign under reversal of orientation, i.e., $F(\overleftarrow{p})=-F(p)$.
In particular, $F$ is uniquely
determined by its values on $\oplaq_\e$.

For functions $f_1,f_2$, $1$-forms $\theta_1,\theta_2$, and $2$-forms $F_1,F_2$,
define their $L^2$ inner products
\begin{align}
\label{def: innerProdDisct}
\langle f_1 ,f_2 \rangle_\e^{(0)}
=\e^d \sum_{x\in \T^d_\e} f_1(x)f_2(x),
\quad
\langle \theta_1 ,\theta_2 \rangle_\e^{(1)}
= \e^d\sum_{e\in \obonds_\e} \theta_1(e)\theta_2(e),
\end{align}
where the superscripts indicate that the inner product is for $k$-forms for $k=0,1,2$,
and
\[
\langle F_1 ,F_2 \rangle_\e^{(2)}
=\e^d \sum_{p\in \oplaq_\e} F_1(p)F_2(p).
\] 
In the following sections, we often drop the superscript for simplicity of notation. 

For a function $f$ and a $1$-form $\theta$,
define, for $e=(x,y)$ and $p=(e^{(1)},e^{(2)},e^{(3)},e^{(4)})$,
\[
\mrd_\e f (e)=\e^{-1} (f(y)-f(x)),
\qquad
\mrd_\e \theta (p) = \e^{-1} \theta(\partial p),
\]
where
\[
    \theta(\partial p)
    \eqdef \theta(e^{(1)})+\theta(e^{(2)})+\theta(e^{(3)})+\theta(e^{(4)}).
\]
These are a $1$-form and a $2$-form, respectively.
For a $1$-form $\theta$ and a $2$-form $F$,
define
\[
\mrd_\e^* \theta (x)\eqdef \e^{-1}   \sum_{j=1}^d
\Big(  \theta(x-\e_j,x)- \theta(x,x+\e_j) \Big),
\]
\begin{equ}[e:d*F]
\mrd_\e^* F (x,x+\e_i) = \e^{-1}  \sum_{j\in [d],j\neq i} 
\Big( F(\bar p_j) + F(p_j) \Big)
\end{equ}
where $\bar p_j= (x,x+\e_i,x+\e_i+\e_j,x+\e_j)$
and $p_j =(x,x+\e_i,x+\e_i-\e_j,x-\e_j)$.

One can check that $\mrd_\e \mrd_\e f=0$,
and $\mrd_\e^* \mrd_\e^* F=0$.
 Moreover, $\mrd_\e^*$ is the adjoint of $\mrd_\e$, namely,
\begin{equ}[e:adjoint-d]
\langle \mrd_\e A , B \rangle_\e^{(k)} = \langle A, \mrd_\e^* B\rangle_\e^{(k-1)}\;,
\qquad
(k=1,2)
\end{equ}
where $A$ is a $0$-form and $B$ is a $1$-form,
or $A$ is a $1$-form and $B$ is a $2$-form.


Finally, for a $1$-form $\theta$, we define 
\begin{equ}[e:def-Laplace]
\Delta_\e \theta = -(\mrd_\e \mrd_\e^*+\mrd_\e^*\mrd_\e)
\end{equ}
and one can check that on each component it is the usual finite difference Laplacian, namely,
for $e\in \obonds_{\e,i}$ with $i\in [d]$,
\[
\Delta_\e \theta(e) = \e^{-2} \sum_{j=1}^d \Big( \theta(e+\e_j)+\theta(e-\e_j) - 2\theta(e)\Big).
\]
In particular, standard heat kernel estimates for $e^{t\Delta_\e}$ hold.

\begin{remark}
One can define $\mrd_\e$ and $\mrd_\e^* $ on general $k$-forms
(see e.g., \cite{MR728862}). This is not necessary for our purposes, so we do not give 
this general definition.
\end{remark}

\subsection{Main theorem}
\label{sec:theorem}

Let $\xi=(\xi_1,\ldots,\xi_d)$ be a $d$-tuple of independent space-time white noises.
For $e=(x,x+\e_j)\in\obonds_{\e,j}$, denote by $m(e)=x+\frac12\e_j$ its midpoint and define the half-open cube
\[
B(e,\e)
=
 m(e)+[-\e/2,\e/2)^d
\subset \T^d.
\]

For each $j$, the cubes $\{B(e,\e)\,:\,e\in\obonds_{\e,j}\}$ form a partition of $\T^d$. We define the discrete white noise by
\begin{align}
\label{eq:Disc_white_noise}
\xi^\e(t,e) = \e^{-d} \langle \xi_j(t,\cdot), \mathbf{1}_{B(e,\e)}\rangle,
\qquad e\in\obonds_{\e,j}.
\end{align}
Here, the pairing is the usual distributional pairing on $\T^d$. This normalization preserves the Itô isometry for the inner product \eqref{def: innerProdDisct}:
\begin{align*}
\E[\inner{f_1}{\xi^\eps(t)}^{(1)} \inner{f_2}{\xi^\eps(s)}^{(1)}] = \delta(t-s)\inner{f_1}{f_2}^{(1)}
\end{align*}
for discrete $1$-forms $f_1,f_2$ on $\T^d_\e$.

Since the Haar measure on $U(1)$ corresponds to the uniform measure on $(-\pi,\pi]$ after choosing principal representatives on positively oriented edges, we rewrite \eqref{eq:mu_N} as
\begin{equation}
	\label{eq:mu_Nvartheta}
	\mu_{N,\beta}(d \vartheta) \eqdef
	Z_{\beta}^{-1} 
	e^{- 2\beta\sum_{p\in \oplaq_L} h(\vartheta(\partial p ))} \prod_{e\in \obonds_L} d \vartheta(e)\;,
\end{equation}
where $\vartheta$ is represented on $\obonds_L$ by principal angles in $(-\pi,\pi]$ and extended to all oriented edges by $\vartheta(\bar e)=-\vartheta(e)$ in $\R/(2\pi\Z)$.
(All eight orientations of each geometric plaquette occur in $\plaq$. Since $h$ is even, restricting the sum in \eqref{e:def-CSU} to one positively oriented plaquette per geometric square produces an overall factor $2$ in place of $\frac14$.)

We first specify the class of single-plaquette actions $h$ considered in this paper. Besides being $2\pi$-periodic and even as mentioned in Section~\ref{sec:Intro}, we impose the following local assumptions near the identity.


\begin{assumption}
\label{assmp:YMAction}
Let $I$ 
be a nonempty index set. 
For each $\lambda\in I$, let
$h_\lambda\colon\R\to\R$ be an even $2\pi$-periodic function.
We assume that there exist constants $\rho_0\in(0,\pi)$ and $C<\infty$,
independent of $\lambda$, such that $h_\lambda\in \CC^6([-\rho_0,\rho_0])$,
$  h_\lambda''(0)=1$
and
\[
    \sup_{\lambda\in I}\sup_{|\alpha|\le \rho_0}
    \max_{0\le k\le 6}
    |h_\lambda^{(k)}(\alpha)|
    \le C.
\]
\end{assumption}

For the rest of the paper, to lighten the notation, when $I$ is a singleton or when the dependence on $\lambda$ is unimportant, we simply write $h=h_\lambda$.

The above assumptions imply that $h'(0)=0$  and $h(0)$ is a local minimum.
In the three examples mentioned in Section \ref{sec:Intro}, we take $I$ to be an (arbitrary) singleton for the Wilson and Manton actions, and remark that Assumption~\ref{assmp:YMAction} is clearly satisfied;
a non-trivial family of single-plaquette actions $h_\lambda$
is needed only for the Villain action.
(For the Manton action, smoothness only holds in a neighborhood of $0$; globally it fails at $(2k+1)\pi$, $k\in\Z$.)

\begin{lemma}\label{lem:villain-assumption}
For the Villain model, there exists $\lambda_0>0$ sufficiently large such that, with $I=[\lambda_0,\infty)$, the family $\{h_\lambda\}_{\lambda\in I}$ defined in
\eqref{e:Villain-h} satisfies Assumption~\ref{assmp:YMAction}.
\end{lemma}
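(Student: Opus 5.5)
We write $h_\lambda$ explicitly and isolate the $n=0$ term. Set
\[
\Theta_\lambda(\alpha)\eqdef\sum_{n\in\Z}e^{-\frac{\lambda}{2}(\alpha-2\pi n)^2}
=e^{-\frac\lambda2\alpha^2}\bigl(1+R_\lambda(\alpha)\bigr),
\qquad
R_\lambda(\alpha)\eqdef\sum_{n\neq0}e^{-\frac\lambda2\left((\alpha-2\pi n)^2-\alpha^2\right)}
=\sum_{n\ne0}e^{-\lambda(2\pi^2n^2-2\pi n\alpha)} .
\]
Then
\[
h_\lambda(\alpha)=\frac{1}{c_\lambda}\Bigl(\frac{\alpha^2}{2}-\frac{1}{\lambda}\log(1+R_\lambda(\alpha))\Bigr),
\]
after normalizing so that $h_\lambda(0)$ is whatever additive constant one likes. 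An additive constant is irrelevant; if needed we subtract $h_\lambda(0)$, which is still bounded. Evenness and $2\pi$-periodicity of $h_\lambda$ are immediate from the corresponding properties of $\Theta_\lambda$, which is smooth and strictly positive. The normalization $h_\lambda''(0)=1$ holds by the definition of $c_\lambda$.

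We fix $\rho_0=\pi/2$; any $\rho_0<\pi$ would work. For $|\alpha|\le\rho_0$ and $n\neq0$,
\[
2\pi^2n^2-2\pi n\alpha\ \ge\ 2\pi^2n^2-\pi^2|n|\ \ge\ \pi^2n^2 .
\]
Hence $R_\lambda$ and all its $\alpha$-derivatives up to order $6$ satisfy
\[
|\partial_\alpha^kR_\lambda(\alpha)|\le\sum_{n\ne0}(2\pi|n|\lambda)^k e^{-\lambda\pi^2n^2}\le C_k\lambda^k e^{-\pi^2\lambda}
\]
uniformly on $[-\rho_0,\rho_0]$. For $\lambda\ge\lambda_0$ large this gives $R_\lambda\le 1/2$. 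By the chain rule (Faà di Bruno) applied to $\log(1+\cdot)$, each derivative $\partial^k_\alpha\log(1+R_\lambda)$ for $k\le6$ is then bounded by $C\lambda^6e^{-\pi^2\lambda}$. The full correction $\lambda^{-1}\partial^k\log(1+R_\lambda)$ is therefore uniformly bounded, and in fact exponentially small.

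It remains to control $c_\lambda$. Differentiating the formula for $h_\lambda$ twice at $0$ gives
\[
c_\lambda=1-\lambda^{-1}\partial_\alpha^2\log(1+R_\lambda)(0)=1+O(\lambda e^{-\pi^2\lambda}),
\]
which is consistent with the expansion stated in the introduction. Enlarging $\lambda_0$ if necessary, we get $c_\lambda\in[1/2,2]$ for $\lambda\ge\lambda_0$; in particular $c_\lambda>0$, so $h_\lambda$ is well defined.

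Combining these estimates:
\[
|h_\lambda^{(k)}(\alpha)|\le 2\Bigl(|\partial^k(\alpha^2/2)|+C\lambda^{5}e^{-\pi^2\lambda}\Bigr)\le C
\quad\text{on }[-\rho_0,\rho_0],\ 0\le k\le6,
\]
uniformly in $\lambda\ge\lambda_0$. The value $k=0$ is also bounded since $\alpha^2/2\le\pi^2/8$ there. Moreover $h_\lambda$ is $\CC^\infty$, hence in $\CC^6([-\rho_0,\rho_0])$.

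The only delicate step is ensuring that the polynomial-in-$\lambda$ factors coming from differentiating the exponentials are beaten by $e^{-\pi^2\lambda}$. This relies on keeping $|\alpha|$ uniformly away from $\pi$, which is why we restrict to $\rho_0<\pi$. That restriction is essential: near $\alpha=\pi$ the $n=1$ term is comparable to the $n=0$ term, and the derivatives of $h_\lambda$ are no longer uniformly bounded.
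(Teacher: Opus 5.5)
Your proof is correct and follows the paper's argument essentially step by step. Like the paper, you factor out the $n=0$ term, bound the derivatives of the remainder by $C_k\lambda^k e^{-c\lambda}$ using $|\alpha|\le\rho_0<\pi$, pass to $\log(1+R_\lambda)$, and read off $c_\lambda=1-\lambda^{-1}\partial_\alpha^2\log(1+R_\lambda)(0)\in[1/2,2]$; the only difference is that the paper keeps $\rho_0\in(0,\pi)$ arbitrary rather than fixing $\pi/2$. One small cleanup: the relation \eqref{e:Villain-h} determines $h_\lambda$ exactly and your displayed formula for it is already exact, so the remark about normalizing or subtracting an additive constant is unnecessary.
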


\begin{proof}
Fix $\rho_0\in(0,\pi)$. Write $W_\lambda(\alpha)$ for the right-hand side of \eqref{e:Villain-h}, namely,
\[
    h_\lambda(\alpha)
    =
    -\frac{1}{c_\lambda\lambda}\log W_\lambda(\alpha).
\]
The function $W_\lambda$ is smooth, even and $2\pi$-periodic, and hence the same is true for $h_\lambda$. Also by definition $h_\lambda''(0)=1$.
It remains to check that the local $\CC^6$ bounds can be chosen uniformly for all large $\lambda$.
On $|\alpha|\le \rho_0$, factor out the $n=0$ term in \eqref{e:Villain-h} and set
\[
    r_\lambda(\alpha)
    \eqdef
    e^{\lambda\alpha^2/2}W_\lambda(\alpha)-1
    =
    \sum_{n\neq 0}
    \exp\left\{
        -\frac{\lambda}{2}\bigl((\alpha-2\pi n)^2-\alpha^2\bigr)
    \right\}.
\]
For $|\alpha|\le \rho_0$ and $n\neq0$,
\[
    (\alpha-2\pi n)^2-\alpha^2
    =4\pi^2n^2-4\pi n\alpha
    \ge 4\pi(\pi-\rho_0)|n|.
\]
Therefore, for every $0\le k\le 6$, there exist constants $C_k,c_{\rho_0}>0$, independent of $\lambda$, such that
\begin{equ}\label{eq:villain-r-bound}
    \sup_{|\alpha|\le\rho_0}
    \left|\partial_\alpha^k r_\lambda(\alpha)\right|
    \le
    C_k\lambda^k e^{-c_{\rho_0}\lambda}.
\end{equ}
Indeed, differentiating the summands only produces polynomial factors in $\lambda$ and $n$, which are absorbed by the exponential decay.

Choose $\lambda_0$ large enough that $\|r_\lambda\|_{\CC^0([-\rho_0,\rho_0])}\le 1/2$ for all $\lambda\ge\lambda_0$, and define
$  \ell_\lambda(\alpha)\eqdef \log(1+r_\lambda(\alpha))$
so that 
\begin{equ}[e:h-and-ell]
    h_\lambda(\alpha)
    =
    \frac{\alpha^2}{2c_\lambda}
    -
    \frac{1}{c_\lambda\lambda}\ell_\lambda(\alpha).
\end{equ}
By \eqref{eq:villain-r-bound} and the elementary fact that 
every $k$-th derivative of
$\ell_\lambda$ is a finite sum of terms of the form
$ \frac{\prod_{i=1}^m \partial_\alpha^{j_i}r_\lambda}{(1+r_\lambda)^m}$ with $\sum_i j_i=k$,
we obtain, after adjusting the constant $C_k$, 
\begin{equation}\label{eq:villain-ell-bound}
    \sup_{|\alpha|\le\rho_0}
    \left|\partial_\alpha^k \ell_\lambda(\alpha)\right|
    \le
    C_k\lambda^k e^{-c_{\rho_0}\lambda},
    \qquad 0\le k\le 6,
    \quad \lambda\ge\lambda_0.
\end{equation}
Since
  $\log W_\lambda(\alpha)
    =
    -\frac{\lambda}{2}\alpha^2+\ell_\lambda(\alpha)$,
    we choose 
\[
    c_\lambda
    =
    -\frac{1}{\lambda}\partial_\alpha^2\log W_\lambda(0)
    =
    1-\frac{1}{\lambda}\ell_\lambda''(0)
\]
so that $h_\lambda''(0)=1$. 
By \eqref{eq:villain-ell-bound}, after increasing $\lambda_0$ if necessary,  for all $\lambda\ge\lambda_0$
\[
    c_\lambda=1+O(\lambda e^{-c_{\rho_0}\lambda}) \in [1/2,2]\;.
\]
Combining \eqref{e:h-and-ell}, \eqref{eq:villain-ell-bound} and the bound on $c_\lambda^{-1}$ gives
\[
    \sup_{\lambda\ge\lambda_0}
    \sup_{|\alpha|\le\rho_0}
    \max_{0\le k\le 6}
    \left|\partial_\alpha^k h_\lambda(\alpha)\right|
    <\infty.
\]
\end{proof}

\begin{remark}\label{rem:Gross-assumptions}
Comparing with   \cite{MR728862} where it was assumed that 
(1) $h:\R\to\R$ is even, $2\pi$-periodic, and has two continuous derivatives;
(2) $\min h = h(0) = 0$;
(3) $h''(x)=1$ whenever $h(x)=0$;
our Assumption~\ref{assmp:YMAction} is mainly concerned about the behavior of $h$ in a neighborhood of $0$.
\end{remark}

Upon scaling $x\mapsto \e x$ as in \eqref{e:scaling-x} and choosing
\[
    \beta=\beta_\e\eqdef \e^{d-4},
    \qquad
    \vartheta=\e\theta,
\]
as in \eqref{e:scaling-theta}-\eqref{eq:barbeta1},
we obtain from \eqref{eq:mu_Nvartheta} the measure
\[
Z_\e^{-1}e^{-2\mathcal S_\e(\theta)}\,d\theta,
\qquad
\mathcal S_\e (\theta)
=
\e^{d-4}
\sum_{p\in\oplaq_\e}
 h(\e^2\mrd_\e\theta(p)),
\]
on real-valued $1$-forms $\theta$, represented on $\obonds_\e$ by values in $(-\pi/\e,\pi/\e]$.

Before introducing the Langevin dynamics, 
let $\rho_0$ be as in Assumption~\ref{assmp:YMAction}, fix
$\rho\in(0,\rho_0)$. For instance, for Wilson, Manton and Villain models, we can take $\rho_0 = \pi/4$ and  $\rho=\pi/5$. The role of the parameter $\rho$ will be apparent in the discussion below the proof of Lemma \ref{lem:deriveSPDE}.
Set
\begin{equ}[e:def-tau]
    \tau_\eps
    \eqdef
    \inf\left\{
        t\ge0:
        \exists e\in \obonds_\e,\ 
        |\eps\theta_t(e)| \ge \frac{\rho}{4}
    \right\}.
\end{equ}

On the interval $t<\tau_\eps$, one has
\begin{equ}[e:d-theta-localized]
    |\eps^2 \mrd_\eps\theta_t(p)|
    =
    |\eps\theta_t(\partial p)|
    \le \rho
\end{equ}
for every plaquette $p \in \oplaq_\e$, so all Taylor expansions of $h$ used below are taken
inside the neighborhood from Assumption~\ref{assmp:YMAction}.

The DeTurck-gauge-fixed Langevin dynamics for the above measure are therefore driven by the $L^2$ gradient $-\nabla\mathcal S_\e$, together with the DeTurck term
$- \mrd_\e \mrd^*_\e \theta$:
\begin{equ}
	\label{eq:NParaDisc}
	\partial_t \theta^\e(e)  
	=  -\e^{d-4}\sum_{p\in \oplaq_\e} \nabla  h(\e^2\mrd_\e \theta^\e(p))(e)
	- \mrd_\e \mrd^*_\e \theta^\e(e)
	 + \xi^\e(e)
\end{equ} 
for $t<\tau_\e$ where 
$\tau_\e$ is defined in \eqref{e:def-tau}.
The DeTurck term does not change the gauge equivalence class,
since it is exact, i.e., is of the form $\mrd_\e (\cdots)$.

Our main result states that the above Langevin dynamics converge to the solution of the $1$-form-valued stochastic heat equation in the discrete-continuum H\"older distances $\CC^{-1/2-}_{\gamma,\e}$ defined in Section~\ref{sec:spaces}; see also Remark~\ref{rem:scalar-vector-convention} for our convention for norms on $1$-forms.
\begin{theorem}
\label{thm:main}
Assume that $d=3$ and $\bar\beta=1$. Let $\xi=(\xi_1,\xi_2,\xi_3)$, where $\xi_1, \xi_2,\xi_3$ are independent space-time white noises on $\R\times\T^3$, defined on a probability space $(\Omega,\mathcal{F},\mathbb{P})$. Denote by $\xi^\eps$ its lattice approximation defined in \eqref{eq:Disc_white_noise}.

Suppose $I$ and $\{h_\lambda\}_{\lambda\in I}$ satisfy Assumption~\ref{assmp:YMAction}.
For every sufficiently small $\eps$, choose $\lambda_\eps\in I$ and write $h_\eps = h_{\lambda_\eps}$. Let $\theta^\eps$ solve the DeTurck-gauge-fixed lattice Langevin equation \eqref{eq:NParaDisc}, with $h=h_\eps$ and possibly random initial condition $\theta^\eps(0)$, up to the stopping time $\tau_\eps$.

Let $\theta=(\theta_1,\theta_2,\theta_3)$ solve the continuum stochastic heat
equation
\[  \partial_t\theta_i
    =
    \Delta\theta_i+\xi_i
\]
 with possibly random initial condition 
 $ \theta_i(0)$ 
 for  $i=1,2,3$.
Fix $\kappa>0$ sufficiently small and suppose that 
\[
-\frac23+\frac53\kappa<\eta<0,
\qquad
\gamma=\bigl(\eta+\tfrac12\bigr)\wedge 0.
\]
Suppose further that there is a deterministic constant $M<\infty$ such that, almost surely,
\[
\sup_{0<\e\le1}\|\theta^\e(0)\|_{\mathcal C_\e^\eta}
+\|\theta(0)\|_{\mathcal C^\eta}
\le M.
\]
If
$\|\theta^\e(0);\theta(0)\|_{\mathcal C_\e^\eta}\to0$
in probability, then for every $T>0$ and $\delta>0$,
\begin{equ}
\lim_{\eps\to0}
\mathbb P\left(
\tau_\eps>T,
\ \|\theta^\eps;\theta\|_{\CC_{\gamma,\eps}^{-1/2-\kappa,T}}
\leq \delta
\right)=1.
\end{equ}
\end{theorem}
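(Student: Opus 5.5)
We first make the nonlinearity in \eqref{eq:NParaDisc} explicit by Taylor expanding $h_\eps$ around $0$, which is legitimate for $t<\tau_\eps$ by \eqref{e:d-theta-localized} and Assumption~\ref{assmp:YMAction}. Since $\nabla h(\eps^2\mrd_\eps\theta(p))(e)$ equals $h'(\eps^2\mrd_\eps\theta(p))$ times the incidence of $e$ in $p$ (with a factor $\eps$ from the chain rule), summing over plaquettes gives $-\eps^{d-4}\sum_p\nabla h(\cdots)=\mrd_\eps^*\bigl(\eps^{-2}h'(\eps^2\mrd_\eps\theta)\bigr)$ up to the normalisation of inner products. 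Writing $h'(\alpha)=\alpha+\frac{h'''(0)}{2}\alpha^2+\frac{h^{(4)}(0)}{6}\alpha^3+R(\alpha)$ and using that $h$ is even (so $h'''(0)=0$ and $h^{(5)}(0)=0$) with $|R(\alpha)|\lesssim|\alpha|^5$ uniformly in $\lambda$, the equation becomes
\[
\partial_t\theta^\eps=\Delta_\eps\theta^\eps+\xi^\eps+\mrd_\eps^*\Bigl(\tfrac{h^{(4)}_\eps(0)}{6}\eps^4(\mrd_\eps\theta^\eps)^3+\eps^{-2}R(\eps^2\mrd_\eps\theta^\eps)\Bigr),
\]
using \eqref{e:def-Laplace}. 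So the task reduces to showing that the cubic term with prefactor $\eps^4$, and the quintic-order remainder, vanish in the limit.

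We then use a Da Prato--Debussche decomposition $\theta^\eps=\Psi^\eps+v^\eps$, where $\Psi^\eps$ solves the linear discrete SHE with initial data $\theta^\eps(0)$. Standard discrete stochastic estimates via Kolmogorov and discrete heat-kernel bounds give that $\Psi^\eps$ is bounded in $\CC^{-1/2-\kappa}_{\eps}$ and converges to the continuum SHE solution in the discrete-continuum norm, where the initial-data convergence enters through the heat-semigroup term with blow-up weight $\gamma$. The remainder $v^\eps$ solves
\[
\partial_t v=\Delta_\eps v+\mrd_\eps^*\bigl(c\,\eps^4(\mrd_\eps(\Psi+v))^3+\cdots\bigr),\qquad v(0)=0.
\]

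The key point is that $\eps^4(\mrd_\eps\Psi)^3$ is not small pointwise: $\mrd_\eps\Psi$ is of size $\eps^{-3/2}$, so the product is of size $\eps^{-1/2}$. Here we renormalize: Wick-expand $(\mrd_\eps\Psi)^3$ into $\Wick{(\mrd_\eps\Psi)^3}+3C_\eps\mrd_\eps\Psi$, with $C_\eps\sim\eps^{-3}$ the variance of the plaquette variable. The Wick cube multiplied by $\eps^4$ is a third-chaos field; computing its second moment tested against smooth functions shows it tends to $0$ in $\CC^{-3/2-\kappa}_\eps$, with gain roughly $\eps^{4}\cdot\eps^{-9/2+3/2}=\eps$, after which $\mrd_\eps^*$ and Schauder estimates bring it into a norm of positive regularity. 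The linear term $3c\,\eps^4C_\eps\,\mrd_\eps^*\mrd_\eps\Psi\sim\eps\,\mrd^*_\eps\mrd_\eps\Psi$ vanishes as $\eps\to0$ in the relevant norm. Cross terms such as $\eps^4(\mrd\Psi)^2\mrd v$ and $\eps^4\mrd\Psi(\mrd v)^2$ are handled by similar means: $\eps^4\Wick{(\mrd\Psi)^2}$ is small, and $v$ has positive regularity, giving powers of $\eps$. The remainder satisfies $\eps^{-2}R(\eps^2\mrd\theta)\lesssim\eps^8|\mrd\theta|^5$, which is of order $\eps^{8-15/2}=\eps^{1/2}$ pointwise using only the a priori bound $|\eps\theta|\le\rho/4$; with the factor $\mrd_\eps^*$ this requires care, but on $\{t<\tau_\eps\}$ a crude $\eps^{1/2-}$ bound in a negative Hölder norm should suffice. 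The constraint $\eta>-2/3+5\kappa/3$ should come exactly from controlling the quintic or cubic terms near $t=0$ with the weight $\gamma$.

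A fixed-point or Gronwall argument in $\CC^{\cdot}_{\gamma,\eps}$, run up to the stopping time where $\|v\|$ or the stochastic objects exceed a large level $K$, gives $\|v^\eps\|\to0$ in probability. The last step, which I expect to be the main obstacle, is showing $\mathbb{P}(\tau_\eps>T)\to1$. This requires a sup-norm bound $\sup_e|\eps\theta^\eps_t(e)|<\rho/4$, even though $\theta$ is a distribution. I plan to obtain it by proving $\eps\sup|\Psi^\eps|\lesssim\eps^{1/2-\kappa}$ with high probability, via a union bound over the $\eps^{-3}$ sites and a time grid with Gaussian tails, together with $\eps\|v^\eps\|_\infty\to0$. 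The estimate for $\Psi$ from the initial condition needs $\eps\cdot\eps^{\eta}\to0$, which holds since $\eta>-1$. Combining these with the convergence of $\Psi^\eps$ yields the theorem.
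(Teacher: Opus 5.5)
Your architecture is the paper's. You Taylor expand to $a_3\e^4\mrd_\e^*(\mrd_\e\theta)^3$ plus a quintic remainder, split $\theta=\Psi+v$, and Wick-decompose the pure cubic term so that both chaoses are killed by the explicit powers of $\e$. The first chaos dies because $\e^4 c^\e_t\,\mrd_\e^*\mrd_\e\tilde\Psi\approx\e\,\mrd_\e^*\mrd_\e\tilde\Psi$ is a derivative of a field bounded in $\mathcal C^{-1/2-\kappa}_\e$. You then bootstrap $v$ and use $\e\|\theta\|_{L^\infty_\e}\to0$ for the exit time. However, your treatment of the quintic remainder is wrong as justified. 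The bound $|\e\theta|\le\rho/4$ only gives $|\e^2\mrd_\e\theta|\le\rho$. Hence it gives only $\e^{-2}|R(\e^2\mrd_\e\theta)|\lesssim\rho^4|\mrd_\e\theta|\lesssim\e^{-2}$, which blows up after $\mrd_\e^*$ and Schauder. Your count $\e^{8-15/2}$ tacitly assumes $|\mrd_\e\theta|\lesssim\e^{-3/2}$. That is not implied by the a priori bound. It even fails near $t=0$, where the initial-data part alone can be of size $\e^{\eta-1}\gg\e^{-3/2}$ when $\eta<-\frac12$. What is needed is the decomposition inside the bootstrap. Expand $\e^8(\mrd_\e\tilde\Psi+\mrd_\e P^\e_t\theta^\e(0)+\mrd_\e v)^5$ and use $\|\e^{3/2+\kappa}\mrd_\e\tilde\Psi\|_{L^\infty_\e}\lesssim r$, $\|\e^{1-\alpha}\mrd_\e v\|_{L^\infty_\e}\lesssim\|v\|_{\mathcal C^\alpha_\e}$, and $\|\e^{1+\nu}\mrd_\e P^\e_t\theta^\e(0)\|_{L^\infty_\e}\lesssim(1\wedge t)^{\frac{\eta+\nu}{2}\wedge0}M$. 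This yields $\e^\kappa$ times an integrable singularity in $t$. The same pure $L^\infty_\e$ power counting handles all the cubic cross terms, provided $\alpha>3\kappa$, so your Wick square is unnecessary. The binding constraint $\eta>-\frac23+\frac53\kappa$ comes from the term $\e^4(\mrd_\e P^\e_t\theta^\e(0))^3$.

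Second, the expanded equation, and hence the equation for $v$, is only valid before $\tau_\e$. So $\mathbb P(\tau_\e>T)\to1$ cannot be a separate last step after proving $v\to0$ on $[0,T]$; as ordered, your argument is circular. One fix is to put $\tau_\e$ into the bootstrap stopping time and close everything by a single continuity argument. The other is what the paper does. Replace $h'$ by a globally defined odd $H$ equal to $h'$ on $[-\rho,\rho]$, with $|z-a_3z^3-H(z)|\lesssim|z|^5$ for all $z$. Run the bootstrap for the globally well-posed $\widehat\theta^\e$ and show $\e\|\widehat\theta^\e\|_{L^\infty_\e}<\rho/4$ on $[0,T]$ on the good event. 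Then conclude $\widehat\theta^\e=\theta^\e$ and $\tau_\e>T$ there. Once this is set up, the exit time is not the main obstacle. The bound $\e\|\tilde\Psi^\e\|_{L^\infty_\e}\lesssim\e^{1/2-\kappa}\|\tilde\Psi^\e\|_{\mathcal C^{-1/2-\kappa}_\e}$ is immediate from Lemma~\ref{lem:eps-improve-reg}\ref{pt:wo_D}, so no union bound over sites is needed.
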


The proof of Theorem~\ref{thm:main} will be given at the end of Section~\ref{sec:solution}.

\section{Derivation of the discrete SPDE}
\label{sec:derivation}

We now identify the leading order polynomial nonlinear term \eqref{eq:NParaDisc}.
Below, for an odd function $R$, we interpret $R(\mrd_\e \theta)$
 plaquette-wise, namely, for each plaquette $p$,
\[
(R(\mrd_\e \theta)) (p) = R(\mrd_\e \theta (p)),\qquad \mbox{e.g.} \quad 
((\mrd_\e \theta)^3)(p) = (\mrd_\e \theta(p))^3\;.
\]
These are well-defined discrete $2$-forms, satisfying the requirement  $F(\overleftarrow{p})=-F(p)$ as in 
Section~\ref{sec:Notation}.
In particular, one can apply $\mrd_\e^*$ on them.

As mentioned above, we will drop the dependence of $h$ on $\lambda_\eps$ in our notation.

\begin{lemma}\label{lem:deriveSPDE}
Let $\tau_\e$ be a stopping time defined as in \eqref{e:def-tau}. On the time interval $t<\tau_\e$,  the DeTurck-gauge-fixed Langevin dynamics \eqref{eq:NParaDisc} can be written as
\begin{equ}
\label{eq:NParaDisc2}
\partial_t \theta^\e(e)  
 = \Delta_\e \theta^\e
+ a_3 \, \e^4 \mrd_\e^* \,(\mrd_\e \theta^\e)^3(e)
+ \e^{-2} \mrd_\e^* (\mathfrak{R}(\e^2 \mrd_\e \theta^\e))(e)
+\xi^\e
\end{equ}
where  $a_3 = -\frac16 h^{(4)}(0)$ and $\mathfrak{R}(z)\eqdef
    z-a_{3}z^3-h'(z)$.

Moreover, there exists $C>0$, independent of $\eps$, such that, for all $z\in [-\rho_0,\rho_0]$,
\begin{equ}[e:coeff-bounds]
|a_3|\le C,
\qquad
|\mathfrak R(z)|\le C|z|^5.
\end{equ}
\end{lemma}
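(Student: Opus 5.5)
The computation has two parts: identify the gradient of $\mathcal S_\e$ as $\mrd_\e^*$ applied to a plaquette-wise function of $\mrd_\e\theta$, and then Taylor expand $h'$ around $0$ inside the window guaranteed by \eqref{e:d-theta-localized}.

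\textbf{Step 1: the gradient.} We identify $\nabla \mathcal S_\e$ with respect to the inner product $\langle\cdot,\cdot\rangle_\e^{(1)}$; the factor $\e^{-d}$ relating this to the coordinatewise derivative is precisely what matches the normalization of $\xi^\e$. For a perturbation $\theta+s\phi$ we compute
\[
\frac{d}{ds}\Big|_{s=0}\mathcal S_\e(\theta+s\phi)
=\e^{d-4}\sum_{p\in\oplaq_\e} h'(\e^2\mrd_\e\theta(p))\,\e^2\mrd_\e\phi(p)
=\e^{-2}\big\langle h'(\e^2\mrd_\e\theta),\mrd_\e\phi\big\rangle_\e^{(2)} .
\]
Since $h$ is even, $h'$ is odd, so $h'(\e^2\mrd_\e\theta)$ is a genuine $2$-form in the sense of Section~\ref{sec:Notation}. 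By \eqref{e:adjoint-d} this equals $\langle \e^{-2}\mrd_\e^* h'(\e^2\mrd_\e\theta),\phi\rangle_\e^{(1)}$. Hence the drift term in \eqref{eq:NParaDisc} is $-\e^{-2}\mrd_\e^*h'(\e^2\mrd_\e\theta)$. Here I read $\nabla$ in \eqref{eq:NParaDisc} as this $L^2_\e$-gradient, with the factor $2$ in $e^{-2\mathcal S_\e}$ absorbed into the Langevin convention $\partial_t\theta=-\nabla(2\mathcal S_\e)/2+\dots$. The main obstacle is exactly this bookkeeping of conventions and constants, in particular checking that one obtains $\Delta_\e$ with coefficient $1$ rather than $2$. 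I would settle it by checking the quadratic (Manton) case directly, where the drift must be $-\mrd_\e^*\mrd_\e\theta$.

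\textbf{Step 2: insert the definition of $\mathfrak R$.} By definition, $h'(z)=z-a_3z^3-\mathfrak R(z)$. Substituting $z=\e^2\mrd_\e\theta(p)$ plaquette-wise gives
\[
-\e^{-2}\mrd_\e^*h'(\e^2\mrd_\e\theta)=-\mrd_\e^*\mrd_\e\theta+a_3\e^4\mrd_\e^*(\mrd_\e\theta)^3+\e^{-2}\mrd_\e^*\mathfrak R(\e^2\mrd_\e\theta).
\]
This uses the linearity of $\mrd_\e^*$ and the fact that each of the three pieces is an odd function of $z$, so each defines a $2$-form. Adding the DeTurck term $-\mrd_\e\mrd_\e^*\theta$ and using \eqref{e:def-Laplace} turns $-\mrd_\e^*\mrd_\e\theta-\mrd_\e\mrd_\e^*\theta$ into $\Delta_\e\theta$, which yields \eqref{eq:NParaDisc2}. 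For $t<\tau_\e$, \eqref{e:d-theta-localized} guarantees that every argument satisfies $|z|\le\rho<\rho_0$, so $h$ is only ever evaluated in the $\CC^6$ region.

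\textbf{Step 3: the bounds \eqref{e:coeff-bounds}.} The bound $|a_3|\le C/6$ is immediate from Assumption~\ref{assmp:YMAction}. For $\mathfrak R$, note that $h'$ is odd and $\CC^5$ on $[-\rho_0,\rho_0]$, so $h'(0)=h'''(0)=0$ and $h^{(5)}$ is odd as well. Taylor's theorem with integral remainder at order $5$ then gives
\[
h'(z)=h''(0)z+\tfrac{1}{6}h^{(4)}(0)z^3+\int_0^z\tfrac{(z-s)^4}{4!}h^{(6)}(s)\,ds .
\]
Since $h''(0)=1$ and $a_3=-\frac16h^{(4)}(0)$, the first two terms are exactly $z-a_3z^3$. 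Therefore $|\mathfrak R(z)|\le \frac{C}{5!}|z|^5$, uniformly in $\lambda$ by the uniform $\CC^6$ bound in Assumption~\ref{assmp:YMAction}.
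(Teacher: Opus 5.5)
Your proposal is correct and follows essentially the paper's route. You compute the $L^2$ gradient via the adjoint relation \eqref{e:adjoint-d}, combine $-\mrd_\e^*\mrd_\e\theta$ with the DeTurck term to obtain $\Delta_\e\theta$, and bound $\mathfrak R$ by Taylor's integral remainder using evenness of $h$ and the uniform $\CC^6$ bound. The only cosmetic difference is that you substitute the tautological identity $h'(z)=z-a_3z^3-\mathfrak R(z)$ before Taylor expanding, whereas the paper expands first and then identifies the remainder; the two are equivalent.
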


\begin{example}
For the Wilson model with $h(x) = 1-\cos x$ as in \eqref{e:Wilson-h}, Taylor expanding $h'(x)$ gives
\[
a_3 = \frac{1}{6},
\qquad
\mathfrak{R}(z)=z-\frac1{6} z^3-\sin(z)\;.
\]
For the  Manton action, $h(z)=z^2/2$ on $|z|<\pi$.
Therefore, on the localized time interval, $h'(z)=z$,
$a_3=0$ and $\mathfrak R\equiv 0$.
The equation is exactly the linear discrete stochastic heat equation.
\end{example}	
\begin{proof}[of Lemma~\ref{lem:deriveSPDE}]
The directional derivative of $\mathcal S_\e$ with respect to an arbitrary $1$-form $v$ is given by
\begin{equs}
\frac{d}{ds}\bigg|_{s=0} 
\mathcal S_\e (\theta^\e + s\, v) 
&= \eps^{d-4} \sum_{p \in \oplaq_\eps} h'(\eps^2 \mrd_\e \theta^\e(p)) \,\eps^2 \mrd_\e v(p) \\
&= \eps^{-2} \langle h'(\eps^2 \mrd_\e \theta^\e), \mrd_\e v \rangle_\e^{(2)}
= \eps^{-2} \langle \mrd_\e^* h'(\eps^2 \mrd_\e \theta^\e), v \rangle_\e^{(1)}
\end{equs}
where we used \eqref{e:adjoint-d} in the last step.
Therefore, \eqref{eq:NParaDisc} can be written as
\[
\partial_t \theta^\e =- \eps^{-2}  \mrd_\e^* h'(\eps^2 \mrd_\e \theta^\e) 
- \mrd_\e \mrd^*_\e \theta^\e
	 + \xi^\e \;.
\]
Recall from \eqref{e:d-theta-localized} that before $\tau_\eps$,
$|\eps^2 \mrd_\e \theta^\e|$ is bounded by $\rho$,
and $h\in \CC^6$ in $(-\rho,\rho)$. 
This allows us to Taylor expand $h'$:
\begin{equs}
h'(\eps^2 \mrd_\e \theta^\e)  
= h'(0) &+ h''(0) \eps^2 \mrd_\e \theta^\e + \frac12 h'''(0) (\eps^2 \mrd_\e \theta^\e)^2
\\
& + \frac16 h^{(4)}(0) (\eps^2 \mrd_\e \theta^\e)^3 + \frac{1}{24} h^{(5)}(0) (\eps^2 \mrd_\e \theta^\e)^4
- \mathfrak{R}(\eps^2 \mrd_\e \theta^\e)\;.
\end{equs}
By Assumption~\ref{assmp:YMAction},
$h''(0)=1$, and 
 $h'(0)=h'''(0)=h^{(5)}(0) =0$.
Observe that for the linear term, following \eqref{e:def-Laplace}, we have
\[
-\eps^{-2} \mrd_\e^* (\eps^2 \mrd_\e \theta^\e )- \mrd_\e \mrd^*_\e \theta^\e = \Delta_\e \theta^\e\;,
\]
which yields \eqref{eq:NParaDisc2}.

By Taylor's theorem with integral remainder,
\[
    \mathfrak R(z)
    =
    -\frac1{24}
    \int_0^z
        (z-y)^4 h^{(6)}(y)\,dy \,.
\]
The uniform $\CC^6$ bound in Assumption~\ref{assmp:YMAction} gives
\[
    |\mathfrak R(z)|
    \le
    C
    \int_0^{|z|}
        (|z|-y)^4\,dy
    \le
    C|z|^5 .
\]
\end{proof}

Note that, even to derive the equation \eqref{eq:NParaDisc2}
we need to assume $t<\tau_\eps$. 
Eventually, we will prove that over an arbitrary {\it fixed} interval $[0,T]$ the equation \eqref{eq:NParaDisc2} converges to SHE. To this end we need to argue that $\tau_\eps>T$ with high probability;
this argument will rely on PDE estimates in Section~\ref{sec:solution}, in particular, a decomposition of $\theta$ into the linear and remainder part.
However, these arguments depend on the fact that 
\eqref{eq:NParaDisc2} makes sense in the first place,
which is only known before $\tau_\eps$ as in Lemma~\ref{lem:deriveSPDE}. To avoid a circular argument, recall $\rho\in (0,\rho_0)$ introduced  above \eqref{e:def-tau}, define 
\footnote{For instance, the interpolation can be chosen as $H(z)=z+\chi(z)(h'(z)-z)$ 
where $\chi\in \CC^\infty_c([-\rho_0,\rho_0])$ is even with $\chi(z)=1$ for $z\in[-\rho,\rho]$.}
\[
H(z)=\begin{cases*}
h'(z), \qquad  |z|\le \rho,
\\
\mbox{arbitrary $\CC^5$ interpolation},
\\
z , \qquad |z| \mbox{ sufficiently large},
\end{cases*}
\]
which is odd, satisfying $H(-z)=-H(z)$.
We then define $\widehat{\mathfrak{R}}(z)\eqdef  z-a_{3}z^3-H(z)$ which coincides with $\mathfrak{R}$ on $[-\rho,\rho]$,
and satisfies
$ |\widehat{\mathfrak{R}}(z)|\le C |z|^5 $ globally, i.e., for all $z\in \R$ and $\eps$. 
We study
\begin{equ}
\label{eq:NParaDisc2'}
\partial_t \widehat\theta^\e(e)  
 = \Delta_\e \widehat\theta^\e
+ a_3 \, \e^4 \mrd_\e^* \,(\mrd_\e \widehat\theta^\e)^3(e)
+ \e^{-2} \mrd_\e^* (\widehat{\mathfrak{R}}(\e^2 \mrd_\e \widehat\theta^\e))(e)
+\xi^\e
\end{equ}
with initial condition $\widehat\theta^\e(0)=\theta^\e(0)$. This equation is well-defined
on any interval $[0,T]$,
whereas \eqref{eq:NParaDisc2} only makes sense on $[0,\tau_\eps]$.
Since  $\widehat{\mathfrak{R}}$ is odd, 
$\widehat{\mathfrak{R}}(\e^2 \mrd_\e \widehat\theta)$ is a well-defined $2$-form, 
so $ \mrd_\e^* $ can be applied to it as discussed before 
Lemma~\ref{lem:deriveSPDE}.
Moreover, for any {\it fixed} $\e>0$ , \eqref{eq:NParaDisc2'} is well-posed on $[0,T]$. Indeed,
it is a stochastic ODE for any {\it fixed} $\e>0$.
By definition, 
\[
a_3 z^3 
+\widehat{\mathfrak{R}}(z) 
=
z-H(z).
\]
The assumption on $H$ implies that $z \mapsto a_3 z^3 
+\widehat{\mathfrak{R}}(z)  $ is 
compactly supported and globally Lipschitz.
Therefore, well-posedness on $[0,T]$ follows from standard stochastic ODE theory.

We will show that 
$\sup_{t\le T}  \eps \|\widehat\theta^\e(t)\|_{L^\infty}<\rho/4 $
on a good event $\Omega_{r,\eps}(T)$, 
defined in \eqref{eq:GoodEvent}. This then implies that over $[0,T]$, for every plaquette $p$, 
one has $ |\eps^2 \mrd_\eps\widehat\theta^\e(p)|\le 4\eps \|\widehat\theta^\e\|_{L^\infty}  < \rho$
and therefore $H=h'$; namely \eqref{eq:NParaDisc2'} and \eqref{eq:NParaDisc2} are identical.

\begin{remark} 
  \label{Rmk:Scaling_vague}
Informally speaking, we have derived a discrete SPDE of the following form:
\begin{equ}[e:vague-form]
\partial_t \theta =\Delta \theta 
+ \eps^4 \partial^4 \theta^3 
+ \eps^8 \partial^6 \theta^5 + \cdots + \xi^\eps .
\end{equ}
By power counting as in \eqref{e:scaling-meas}, 
we have $\Delta\theta\sim -(d+2)/2$ and
$\eps^4 \partial^4 \theta^3  \sim -\frac32 (d-2)$, etc.
The ``sub-criticality'' condition $-(d+2)/2 <  -\frac32 (d-2)$ 
is therefore again equivalent to$d<4$. When $d=3$, we have the following (formal) scalings 
\[
\Delta \theta \sim -5/2, \quad 
\eps^4 \partial^4 \theta^3 \sim -3/2,\quad
\eps^8 \partial^6 \theta^5 \sim -1/2, \quad \mbox{etc.}
\]
In fact, this is precisely the same scaling as dynamical $\Phi^4_3$.
\end{remark}

\begin{remark}
The above discrete equation is considerably simpler than the Lie algebra valued equation obtained for non-abelian lattice gauge theories
in \cite[Section~3]{chevyrev2023invariant} (note that the discrete equation in \cite[Proposition~3.13]{chevyrev2023invariant} also holds in 3D). For $U(1)$, the above simple derivation is effectively equivalent to the geometric derivation in \cite[Section~3]{chevyrev2023invariant};
 the Baker--Campbell--Hausdorff formula therein reduces to the trivial identity $e^a e^b = e^{a+b}$, and the
differential of the exponential map is the identity.
Thus, in the abelian case, pulling the group-valued Langevin dynamics back to the Lie algebra produces exactly an additive white noise; whereas in the non-abelian
case, the same pullback yields field-dependent coefficients multiplying the noise, as well as additional commutator terms.

Moreover, in the abelian case considered here, the only nonlinearities in the equation
arise from the non-quadratic part of the single-plaquette
action. They are suppressed by explicit positive powers of $\eps$. This contrasts with the non-abelian continuum
equation, where commutator nonlinearities of the form $A\partial A$ and $A^3$ remain at order one. Below we show that in the present abelian setting all nonlinear terms vanish as $\eps\to 0$.
\end{remark}

\section{Discrete function spaces}
\label{sec:spaces}

Let $O_T = [0,T]\times \T^d$ and $O_T^\eps = [0,T]\times \T^d_\eps$, and write $\bar O_T = [-1,T+1]\times \T^d$ and $\bar O_T^\eps = [-1,T+1]\times \T^d_\eps$. Let $\| \; \|_\s$ denote the parabolic distance on $\R\times\T^d$.
We follow the definitions of discrete H\"older spaces $\CC_\e^\alpha$ and $\CC_\e^{\alpha,T}$ as in
\cite[Section~4.1]{chevyrev2023invariant}, but allowing $\alpha\in \R$.

Let $\alpha > 0$.
For functions $f\in\CC_\s^\alpha (O_T)$ 
 and $f^\e\colon  O_T^\eps \to \R$, we define
\begin{equation}\label{e:discHolderT}
    \begin{aligned}
\|f^\e;f\|_{\CC_\e^{\alpha,T}}
&\eqdef 
\sup_{z\in O_T^\eps}\sum_{|k|=0}^{\lfloor\alpha\rfloor}|D^k f(z)-D^{k}_\e f^\e(z)|
+ \sup_{\substack{z, w\in O_T \\ \|z-w\|_\s < \e}}
\sum_{|k|=\lfloor\alpha\rfloor}
\tfrac{|D^{k}f(z)-D^{k}f(w)|}{\|z-w\|_\s^{\alpha-\lfloor\alpha\rfloor}}
\\
& + \sup_{\substack{z, w\in O_T^\eps \\ \|z-w\|_\s \geq \e}} 
\sum_{|k|=\lfloor\alpha\rfloor}
	\frac{|(D^{k}f(z)-D^{k}f(w))-(D^{k}_\e f^\e(z)-D^{k}_\e f^\e(w))|}{\|z-w\|_\s^{\alpha-\lfloor\alpha\rfloor}}\;.
    \end{aligned}
\end{equation}
Above, we write $\lfloor\alpha\rfloor$ for the floor of $\alpha$.
The sums in \eqref{e:discHolderT} are over all multi-indices $k=(k_0,\cdots,k_d)$ with $|k|\eqdef 2k_0+k_1+\cdots +k_d$ of the specified value, and we write
$D^k= \partial_t^{k_0} \partial_{x_1}^{k_1}\cdots \partial_{x_d}^{k_d} $.
Furthermore, for $j\in[d]$, let
\[
D_{\e,j}f(x)=\e^{-1}\bigl(f(x+\e_j)-f(x)\bigr)
\]
and define $D_\eps^k=\partial_t^{k_0} D_{\e,1}^{k_1}\cdots D_{\e,d}^{k_d} $. 


Now let $\alpha\leq 0$.
For discrete distributions $f^\e$ on $\bar O_T^\eps$
and distributions $f$ on $\bar O_T$ 
we define 
\begin{equ}\label{eq:f_e_f_alpha_neg}
\Vert f^\e ;f \Vert_{\CC^{\alpha,T}_\e} 
\eqdef 
\sup_{\varphi \in \CB^r_0} 
\sup_{z \in O_T^\eps} 
\sup_{\lambda \in [\e,1]} 
\lambda^{-\alpha} 
|\langle f^\e, \varphi_z^\lambda \rangle_\e
-\langle f, \varphi_z^\lambda \rangle | \;,
\end{equ}
where $r$ is the smallest integer such that $r>-\alpha$,
 and $\CB^r_0$ is the set of space-time smooth functions $\phi\colon \R\times\R^d \to\R$ with $\|\phi\|_{\CC^r}\leq 1$
supported in a ball of radius $\frac14$ centered at $0$, with $\phi^\lambda_z$ being its parabolic rescaling around $z$,
and $\langle \; , \; \rangle_\e$ is the discrete approximation of the inner product 
$\langle \; , \; \rangle$. One can find more details in \cite[Section~4.1]{chevyrev2023invariant}.

We then 
define $\Vert f^\e \Vert_{\CC^{\alpha,T}_\e} =\Vert f^\e;0 \Vert_{\CC^{\alpha,T}_\e} $
where the right-hand side is as in 
\eqref{e:discHolderT} or \eqref{eq:f_e_f_alpha_neg}.

Again as in \cite[Section~4.1]{chevyrev2023invariant}, for $\alpha\in \R$, discrete functions $f^\eps$ on $\T_\e^d$ and distributions $f$ on $\T^d$,
we define 
\begin{align}
\label{def:Holder_disc_cts}
\|f^\e ; f\|_{\CC^\alpha_\e}\;,
\qquad
\|f^\e \|_{\CC^\alpha_\e} \eqdef \|f^\e ; 0\|_{\CC^\alpha_\e}
\end{align}
as in 
\eqref{e:discHolderT} or \eqref{eq:f_e_f_alpha_neg}, but with $O_T$, $O_T^\e$ replaced by 
$\T^d$, $\T^d_\e$,  space-time derivatives or finite differences replaced by the spatial derivatives or finite differences, and space-time test functions $\phi$ replaced by the spatial ones.
We frequently use the suggestive notation $\CC^0_\e = L^\infty_\e$ since $\|f\|_{\CC^0_
\e}$ is equivalent to $\sup_{x\in\T^d_\e} |f(x)|$ uniformly in $\e$.

We will also need ``inhomogeneous'' norms which distinguish time and space and allow possible blow-up at $t=0$.
For $f^\e\colon O_T^\e \to \R$, $f \colon O_T \to \R$, the space-time Hölder norms with parameters $\alpha \in \R$,  $\eta \le 0$ are defined as
\begin{equ}
  \| f^\e;f\|_{\CC^{\alpha,T}_{\eta, \e}} \eqdef \sup_{t \in (0, T]}
(t^{\frac12} \wedge 1)^{-\eta} \|f^\e(t) ;f (t)\|_{\CC^\alpha_\e} \;,
\qquad
    \| f^\e\|_{\CC^{\alpha,T}_{\eta, \e}} \eqdef \| f^\e;0\|_{\CC^{\alpha,T}_{\eta, \e}} 
    \;.
\end{equ}



The following lemma will be useful. 
When the direction is not important, we write $D_\e$ for any one of the operators $D_{\e,j}$; all estimates below are uniform in $j$.
\begin{lemma}
\label{lem:eps-improve-reg}
There exists $C>0$ independent of $\eps$ such that the following hold.
\begin{enumerate}[label=(\roman*)]
\item \label{pt:wo_D} 
For all $\alpha\in \R$ and $\beta \geq 0$, one has
 $\norm{ f}_{\CC^{\alpha+\beta}_\eps} \leq C \e^{-\beta} \norm{f}_{\CC^{\alpha}_\eps}$.
 \item \label{pt:w_D}
 For all $\alpha \in \R$, one has $\norm{ D_\e f}_{\CC^{\alpha-1}_\eps} \leq C  \norm{f}_{\CC^{\alpha}_\eps}$.
  \item \label{pt:w_Deps}
 For all  $\alpha \leq \bar\alpha+1$, one has $\norm{\e D_\e f}_{\CC^{\bar \alpha}_\eps} \le C \e^{\alpha-\bar\alpha} \norm{ f}_{\CC^{ \alpha}_\eps} $. 
\end{enumerate}
\end{lemma}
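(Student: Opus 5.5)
The plan is to prove all three statements by testing against the rescaled test functions $\varphi^\lambda_x$ when the regularity index is negative. For nonnegative indices we use the pointwise/Hölder definition \eqref{e:discHolderT} (spatial version). In both cases the key elementary fact is that discrete functions on $\T^d_\e$ only have to be tested at scales $\lambda\in[\e,1]$, and $\lambda\ge\e$ is what converts lost regularity into powers of $\e$.

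For \ref{pt:wo_D}, first take $\alpha+\beta\le 0$. For $\lambda\in[\e,1]$ one has
\[
\lambda^{-(\alpha+\beta)}|\langle f,\varphi^\lambda_x\rangle_\e|=\lambda^{-\beta}\lambda^{-\alpha}|\langle f,\varphi^\lambda_x\rangle_\e|\le \e^{-\beta}\|f\|_{\CC^\alpha_\e}.
\]
The only care needed is that the test class $\CB^r_0$ for $\alpha+\beta$ uses a possibly smaller $r$ than for $\alpha$. This is handled by the standard equivalence of negative Hölder norms under changes of $r$ (in the discrete setting, \cite[Section~4.1]{chevyrev2023invariant}), or directly by noting that a $\CC^{r'}$ test function at scale $\lambda\ge\e$ can be decomposed à la Hairer via a wavelet/smoothing argument. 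If instead $\alpha+\beta>0$, first treat the case $\alpha\le0$. Bound $|f(x)|$ by testing against an averaging function at scale $\e$, which gives $\|f\|_{L^\infty}\lesssim \e^{\alpha}\|f\|_{\CC^\alpha_\e}$. Then bound the Hölder seminorm in \eqref{e:discHolderT}, where only increments with $|x-y|\ge\e$ occur on the lattice: use $|f(x)-f(y)|\le 2\|f\|_\infty$ and $|x-y|^{-(\alpha+\beta)}\le\e^{-(\alpha+\beta)}$. Derivatives $D_\e^k$ are handled the same way, using $\|D_\e g\|_\infty\le 2\e^{-1}\|g\|_\infty$. The case $\alpha>0$ is the same reasoning at the level of finite differences, by interpolation between $\|f\|_{\CC^\alpha_\e}$ and the trivial bounds above.

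For \ref{pt:w_D}, the key step is summation by parts:
\[
\langle D_{\e,j} f,\varphi^\lambda_x\rangle_\e=-\langle f,D_{\e,j}^-\varphi^\lambda_x\rangle_\e,\qquad D^-_{\e,j}g(y)=\e^{-1}(g(y)-g(y-\e_j)).
\]
Since $\lambda\ge\e$, the function $\lambda D^-_{\e,j}\varphi^\lambda_x$ equals $\psi^\lambda_x$, where $\psi=\int_0^1\partial_j\varphi(\cdot - s\e\lambda^{-1} e_j)\,ds$. This $\psi$ is a test function with $\CC^{r-1}$ norm $\lesssim1$ and support in a slightly larger ball. The support issue is fixed by covering/rescaling. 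Therefore
\[
\lambda^{-(\alpha-1)}|\langle D_\e f,\varphi^\lambda_x\rangle_\e|\lesssim \lambda^{-\alpha}|\langle f,\psi^\lambda_x\rangle_\e|\lesssim\|f\|_{\CC^\alpha_\e}.
\]
The needed regularity order matches, since $r(\alpha-1)-1\ge$ what is needed for $\alpha$. When $\alpha-1\ge0$ the statement is immediate from the definition \eqref{e:discHolderT}, because $D_\e^k D_\e f=D_\e^{k+1}f$. When $\alpha>0>\alpha-1$, combine the two arguments, using that $\|f\|_{\CC^0_\e}$ controls the negative-norm pairing.

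Finally, \ref{pt:w_Deps} follows by combining the previous two. Put $\beta=\bar\alpha+1-\alpha\ge0$. Then
\[
\|\e D_\e f\|_{\CC^{\bar\alpha}_\e}=\e\|D_\e f\|_{\CC^{(\alpha-1)+\beta}_\e}\le C\e\,\e^{-\beta}\|D_\e f\|_{\CC^{\alpha-1}_\e}\le C\e^{\alpha-\bar\alpha}\|f\|_{\CC^\alpha_\e},
\]
using \ref{pt:wo_D} and then \ref{pt:w_D}. I expect the main obstacle to be purely technical: the bookkeeping of test-function classes (the integer $r$ and support radius $\frac14$) in the negative-norm definition \eqref{eq:f_e_f_alpha_neg}. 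For this I would rely on the norm-equivalence results referenced from \cite[Section~4.1]{chevyrev2023invariant} rather than redo them.
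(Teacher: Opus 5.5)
The gap is in the case $0<\alpha<1$ of (ii). Apart from that, your plan is the paper's proof. You use the $\lambda\ge\e$ trade-off and the $L^\infty_\e$ route in (i), summation by parts with the differenced test function $\psi=\Psi^{h,j}$, $h=\e/\lambda$, in (ii), and composition for (iii). Your remark about the integer $r$ in $\CB^r_0$ in (i) is a point the paper passes over silently.

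In the case $0<\alpha<1$ of (ii), after summation by parts you must show
\[
\lambda^{1-\alpha}\bigl|\langle D_{\e,j}f,\varphi^\lambda_x\rangle_\e\bigr|
=\lambda^{-\alpha}\bigl|\langle f,\psi^\lambda_x\rangle_\e\bigr|
\lesssim\|f\|_{\CC^\alpha_\e}
\]
uniformly in $\lambda\in[\e,1]$. Your stated justification is that $\|f\|_{\CC^0_\e}$ controls the negative-norm pairing. That only yields $\lambda^{-\alpha}\|f\|_{L^\infty_\e}$, which is of order $\e^{-\alpha}\|f\|_{\CC^\alpha_\e}$ at $\lambda=\e$. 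In effect it proves $\|D_\e f\|_{\CC^{-1}_\e}\lesssim\|f\|_{\CC^0_\e}$, not the $\CC^{\alpha-1}_\e$ estimate.

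The missing idea is cancellation. By periodicity, $\e^d\sum_{y}D^-_{\e,j}\varphi^\lambda_x(y)=0$, so you may replace $f$ by $f-f(x)$. Distinct lattice points are at distance at least $\e$, so the Hölder part of $\|f\|_{\CC^\alpha_\e}$ gives $|f(y)-f(x)|\le\|f\|_{\CC^\alpha_\e}|y-x|^\alpha\lesssim\lambda^\alpha\|f\|_{\CC^\alpha_\e}$ on the support of $\psi^\lambda_x$. Combined with $\e^d\sum_y|\psi^\lambda_x(y)|\lesssim 1$, this gives the required bound; it is exactly the paper's argument.

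This case is not cosmetic for your proof. You prove (iii) by applying (i) to $D_\e f$ and then (ii) at the index $\alpha$, the reverse of the paper's order. So the bound \eqref{assmp:bd_v}, with $\alpha=4\kappa$ and $\bar\alpha=0$, runs precisely through (ii) at $\alpha\in(0,1)$. The paper's order (first (ii) at index $\bar\alpha+1=1$, then (i)) avoids it in that application. Even so, (ii) as stated requires this case.
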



\begin{proof}

 \ref{pt:wo_D}. When $\alpha \leq 0$ and $\alpha+\beta\leq 0$, the bound follows from (the spatial version of) definition \eqref{eq:f_e_f_alpha_neg}: 
 \begin{equs}
\Vert f^\e  \Vert_{\CC^{\alpha+\beta}_\e} 
&= 
\sup_{\varphi ,z} 
\sup_{\lambda \in [\e,1]} 
\lambda^{-\alpha-\beta} 
|\langle f^\e, \varphi_z^\lambda \rangle_\e | 
\\
&
\le
\eps^{-\beta} 
\sup_{\varphi ,z} 
\sup_{\lambda \in [\e,1]} 
\lambda^{-\alpha} 
|\langle f^\e, \varphi_z^\lambda \rangle_\e | 
=
\e^{-\beta} \norm{f}_{\CC^{\alpha}_\eps}\;.
\end{equs}

When $\alpha\leq 0$, $\alpha+\beta>0$, we proceed via the  $L^\infty_\eps$ norm
\begin{equ}[e:passing-L-inf]
\norm{ f}_{\CC^{\alpha+\beta}_\eps} \leq C\e^{-\alpha-\beta} \norm{ f}_{L^\infty_\eps} \leq \tilde{C}\e^{-\beta} \norm{ f}_{\CC^{\alpha}_\eps}.
\end{equ}
Indeed, for the first inequality, we need to bound the  $D_\e^k f$ term and the H\"older term 
on the right-hand side of \eqref{e:discHolderT}.
By definition of the finite difference one has $\|D_\e f\|_{L^\infty_\eps} \le 2 \eps^{-1} \|f\|_{L^\infty_\eps} $
and therefore
\[
\|D_\e^k f\|_{L^\infty_\eps} 
\le C \eps^{-|k|} \|f\|_{L^\infty_\eps} 
\le C \eps^{-\alpha-\beta} \|f\|_{L^\infty_\eps} 
\]
 for every $k$ such that $0 \le |k|  \le \lfloor\alpha+\beta \rfloor$.
For the H\"older term, writing $k=\lfloor\alpha+\beta\rfloor$,
\[
\sup_{ |x-y|\geq \e}
	\frac{|D^k_\e f^\e(z)-D^k_\e f^\e(w)|}{|x-y|^{\alpha+\beta-k}}
\le 
2 \eps^{-(\alpha+\beta-k)}\|D_\e^k f\|_{L^\infty_\eps} 
\le
C \eps^{-(\alpha+\beta)} \|f\|_{L^\infty_\eps}
\]
where we used the bound on $\|D_\e^k f\|_{L^\infty_\eps} $ we just proved.

For the second inequality in \eqref{e:passing-L-inf},
recall that  for $\alpha \leq 0$
\[
    \|f\|_{\mathcal{C}_\e^\alpha} 
    = \sup_{\varphi ,x} \sup_{\lambda \in [\e, 1]} \lambda^{-\alpha} |\langle f, \varphi_x^\lambda \rangle_\e|
    \ge 
    \sup_x
    \eps^{-\alpha} |\langle f, \psi_x^\eps \rangle_\eps| 
    \gtrsim 
    \sup_x
    \eps^{-\alpha} |f(x)| \;.
\]
Here, the first inequality follows from choosing $\lambda=\eps$ and any
$\psi \in \mathcal{B}_0^r$  such that $\psi(0) > 0$. 
Since the support size of $\psi$ is less than $\frac14$, we know that $\psi^\eps_x$
(which is $\psi$ rescaled by $\eps$ around $x$)
 is equal to zero on all the lattice sites of $\T^d_\eps$
except for $x$ at which $\psi^\eps_x \gtrsim \eps^{-d}$, so the last step above follows from the definition
\[
    \langle f, \psi_x^\eps \rangle_\eps 
    = \eps^d \sum_{y \in \T_\eps^d} f(y)
    \psi^\eps_x(y) \;.
\]

Finally, when  $\alpha > 0$, which implies $\alpha+\beta> 0$, the proof follows similarly to the first inequality of \eqref{e:passing-L-inf}.

\ref{pt:w_D}. 
For $\alpha\geq1$, the claim follows from the definition of the corresponding Hölder norm. For $\alpha< 1$, recall the definition of the discrete negative H\"older norm
\[ 
\| g\|_{\Ceps{\beta}} = \sup_{\varphi \in \mathcal{B}_0^r} \sup_{x \in \T_\eps^d} \sup_{\lambda \in [\eps, 1]} \lambda^{-\beta} \big| \inner{g}{\rescaled{\varphi}{x}{\lambda}} \big|, \quad \rescaled{\varphi}{x}{\lambda}(y) = \lambda^{-d} \varphi\left(\tfrac{y-x}{\lambda}\right)\;.
\]
Fix $j\in[d]$. To bound $\|D_{\e,j}f\|_{\Ceps{\alpha-1}}$, consider $\lambda \in [\eps,1]$, $h\in (0,1]$, and let $D_{-\lambda h,j} g = \frac{g(\cdot) - g(\cdot - \lambda he_j)}{\lambda h}$ denote the backward difference in direction $e_j$ with step length $\lambda h$, and define
\begin{align}
\label{eq:Psi_newtestF}
\Psi^{h,j}(u)
&\eqdef \frac{\varphi(u)-\varphi(u-he_j)}{h},
\\
D_{-\lambda h,j}(\rescaled{\varphi}{x}{\lambda})(y)
&=\lambda^{-1}\rescaled{(\Psi^{h,j})}{x}{\lambda}(y)\;.
\end{align}
The family $\Psi^{h,j}$ is uniformly admissible as a family of test functions, since
$\|D^k\Psi^{h,j}\|_{L^\infty}\lesssim\|\varphi\|_{\CC^{k+1}}$
and the support of $\Psi^{h,j}$ is contained in a ball of radius $O(1)$
uniformly in $h\in(0,1]$, $j\in[d]$, and $\varphi \in \mathcal{B}^{k+1}_0$. Taking $h=\e/\lambda$ and using summation by parts yields
\begin{equation}
\label{eq:Inner_w_Dtest}
\left|\inner{D_{\e,j} f}{\rescaled{\varphi}{x}{\lambda}}\right|
=\left|\inner{f}{D_{-\e,j}(\rescaled{\varphi}{x}{\lambda})}\right|
=\lambda^{-1}\left|\inner{f}{\rescaled{(\Psi^{h,j})}{x}{\lambda}}\right|\;.
\end{equation}
When $\alpha \leq 0$, we obtain directly from the definition of $\Ceps{\alpha}$ that 
\eqref{eq:Inner_w_Dtest} $\lesssim 
\| f\|_{\Ceps{\alpha}} \lambda^{\alpha-1}$.
(For this, remark that the norm on $\Ceps{\alpha}$ remains equivalent, uniformly in $\e$, if the fixed
support radius $1/4$ in the definition of $\CB^r_0$ is replaced by
any other fixed positive radius.)

When $0<\alpha<1$, we subtract $f(x)$ from $f$, which does not change $D_{\e,j}f$, and obtain
\begin{align*}
\lambda^{-1}\left| \inner{f}{\rescaled{(\Psi^{h,j})}{x}{\lambda}}\right|  \lesssim  \lambda^{-1}\left| \inner{\norm{f}_{\Ceps{\alpha}}(\cdot -x)^\alpha}{\rescaled{(\Psi^{h,j})}{x}{\lambda}}\right| \leq\lambda^{-1} \left| \inner{\norm{f}_{\Ceps{\alpha}}\lambda^\alpha}{\rescaled{(\Psi^{h,j})}{x}{\lambda}}\right|.
\end{align*} 
Multiplying both sides of \eqref{eq:Inner_w_Dtest} by $\lambda^{-(\alpha-1)}$ and taking the supremum over $\varphi, x, \lambda$ gives the desired bound.

\ref{pt:w_Deps}. This follows from a combination of \ref{pt:wo_D} and \ref{pt:w_D}:
\begin{align*}
\norm{\e D_\e f}_{\Ceps{\bar\alpha}} 
\overset{\text{\ref{pt:w_D}}}{\lesssim} 
\norm{\e f}_{\Ceps{\bar\alpha+1}} 
\overset{\text{\ref{pt:wo_D}}}{\lesssim}
\e \e^{\alpha-(\bar\alpha+1)} \norm{ f}_{\Ceps{\alpha}} 
 =  \e^{\alpha-\bar\alpha} \norm{f}_{\Ceps{\alpha}}.
\end{align*}
Here, $\alpha \leq \bar\alpha+1$ is used to verify the condition $\beta = \bar\alpha+1-\alpha\geq 0 $ in \ref{pt:wo_D}.
\end{proof}

\begin{remark}\label{rem:scalar-vector-convention}
For the rest of the paper,  we often use the above spaces and apply Lemma~\ref{lem:eps-improve-reg}
not only to discrete functions on $ \T^d_\e$, but also 
to a {\it fixed} component of a discrete $1$-form or $2$-form.
Since $\obonds_{\e,j}$ for a fixed $j$ can be identified with a lattice $\T^d_\e$ (by viewing  midpoints of the bonds as lattice sites), Lemma~\ref{lem:eps-improve-reg} holds for the $j$-th component of a discrete $1$-form which is a function on $\obonds_{\e,j}$. Likewise for $2$-forms.

Moreover, let us write $\theta_j$ for the $j$-th component of a $1$-form $\theta$,
and $F_{ij}$ with $i<j$ for  the $(i,j)$-th component of a $2$-form $F$
(so $F_{ij}$ is a function on the plaquettes of the form $p=(x,\e_i,\e_j)$ with $1\le i < j \le d$ fixed). We extend the definitions of the above spaces and distances to forms, for instance, for $1$-forms $\theta^\eps$ and $\theta$ we define
\begin{equ}[e:norm-forms]
    \|\theta^\eps;\theta\|_{\mathcal C_{\gamma,\eps}^{\eta,T}}
    \eqdef
    \max_{i \in [d]}
    \|\theta^\eps_i;\theta_i\|_{\mathcal C_{\gamma,\eps}^{\eta,T}},
\end{equ}
and similarly for $2$-forms.
Furthermore, 
Lemma~\ref{lem:eps-improve-reg} also holds with $D_\e$ replaced by $\mrd_\e$ and $\mrd_\e^*$
in the following sense. 
By definition  of $\mrd_\e$ and Lemma~\ref{lem:eps-improve-reg}\ref{pt:w_D},
\begin{equs}
\| (\mrd_\e \theta)_{ij}\|_{\CC_\e^{\alpha}}
&=
\| D_{\e,i} (\theta_j)- D_{\e,j} (\theta_i) \|_{\CC_\e^{\alpha}}
\le
\| D_{\e,i} (\theta_j)\|_{\CC_\e^{\alpha}} +  \| D_{\e,j} (\theta_i) \|_{\CC_\e^{\alpha}}
\\
&\lesssim
\| \theta_i \|_{\CC_\e^{\alpha+1}} +  \| \theta_j \|_{\CC_\e^{\alpha+1}}.
\end{equs}
So we have $\| \mrd_\e \theta\|_{\CC_\e^{\alpha}} \lesssim \| \theta \|_{\CC_\e^{\alpha+1}} $.
Likewise for the differentiation operator $\mrd_\e^*$.
\end{remark}

\section{Solution theory}
\label{sec:solution}

In this section, we follow the setting of Theorem~\ref{thm:main}.
 For $i\in\{1,2,3\}$, let $\Psi^\e_i$ be the solution to the following stochastic heat equation on $\obonds_i$
\begin{align}\label{eq:SHE}
\partial_t \Psi^\e_i  
 = \Delta_\e \Psi^\e_i +\xi_i^\e\;, \qquad \Psi^\e_i(0)=\theta_i^\e(0) 
\end{align}
where $\theta^\e(0)$ is  the initial condition as in Theorem~\ref{thm:main}.

Recall that in Section~\ref{sec:derivation}
we plan to analyze $\widehat\theta^\e$ with equation \eqref{eq:NParaDisc2'},
for which we already know well-posedness on $[0,T]$
for fixed $\eps>0$,
instead of \eqref{eq:NParaDisc2}.
Our strategy will be to first show the desired convergence of $\widehat\theta^\e$ to $\theta$ over any interval $[0,T]$, and then prove that $\widehat\theta^\e = \theta^\e$ on $[0,T]$ with high probability.

To this end, we decompose $\widehat\theta^\e(t) = \Psi^\e(t) + \widehat v^\e(t)$
(these are all $1$-forms,
 and we write their components as $\widehat\theta^\e(t) = (\widehat\theta_1^\e(t), \widehat\theta_2^\e(t), \widehat\theta_3^\e(t))$, etc.). 
By \eqref{eq:SHE} and \eqref{eq:NParaDisc2'},
$\widehat v^\e$ solves the following equation with $0$ initial condition:
\begin{equ}
\label{eq:v_i}
\partial_t \widehat v^\e
 = \Delta_\e \widehat v^\e
+ a_3 \, \e^4 \mrd_\e^* \,(\mrd_\e \Psi^\e+\mrd_\e \widehat v^\e)^3  +  \e^{-2} \mrd_\e^* (\widehat{\mathfrak{R}}(\e^2 (\mrd_\e \Psi^\e + \mrd_\e \widehat v^\e)))\;.
\end{equ}

As discussed in Remark \ref{Rmk:Scaling_vague}, the nonlinear terms in our equation 
have the same scaling as  in $\Phi^4_3$.
Recall that for the $\Phi^4_3$ equation, the Da Prato--Debussche decomposition is not sufficient: the products appearing in the analogous nonlinearities to those in \eqref{eq:v_i}  are not well-defined in the classical sense, and the standard ``Young--Schauder'' argument does not close.
Therefore, one typically needs additional machinery, such as regularity structures \cite{Hairer14} or paracontrolled calculus \cite{GIP15,CC18}, to give meaning to the $\Phi^4_3$ equation (though see also \cite{JP23_Phi43}), and the equation needs renormalization to obtain a convergent limiting object.
However, in our case,
these terms involve explicit powers of $\eps$. This allows us to invoke Lemma~\ref{lem:eps-improve-reg} to gain sufficient regularity,
and as in \eqref{eq:bd_Psi_i} and \eqref{e:bound-Cpq} below, we can avoid Young's inequality and 
bound the products in $L^\infty$.
Although Wick renormalization is still needed, the renormalization constants are offset by the positive powers of $\e$ (see Lemma~\ref{lm:renom_cst}).

\subsection{Deterministic estimates for  the nonlinear terms}

For the rest of this section, 
recall the convention \eqref{e:norm-forms} from Remark~\ref{rem:scalar-vector-convention}:
 the norm of $k$-form is understood as 
the maximum of the norms of its components.

Below $T>0$ is fixed, and $\kappa>0$ will be chosen small enough.
\begin{assumption}
  \label{asmpt:v}
 (1) There exists $M>0$ independent of $\e$   such that
 \[
\| \theta^\e(0)\|_{\CC^{\eta}_\e} \le M, 
\qquad  \mbox{for some} \quad  -\frac{2}{3}+\frac{5\kappa}{3}\leq\eta\leq0 .
 \]
 (2)  
There exists $R_v>0$ independent of $\e$  such that 
\begin{equ}[eq:v_bound]
\|\widehat v^\e\|_{\CC^{\alpha,T}_{0, \eps}}\leq R_v\;, \qquad \mbox{where} \quad  3\kappa<\alpha\leq 1.
\end{equ}
\end{assumption}



Below, we write $\tilde \Psi^\e_i$ for the solution to the stochastic heat equation \eqref{eq:SHE} with $0$ initial condition,
and \[
\Psi^\e_i(t)=\tilde \Psi^\e_i (t)+ P_t^\e \theta_i^\e(0)\;,
\]
where $ P_t^\e$ is the discrete semigroup for $\Delta_\eps$.

\begin{assumption}\label{as:models_abstract}
There exists $r>0$ independent of $\e$ 
 such that for all  $\e>0$,
\begin{align}\label{e:assump-Psi-1}
 \sup_{t\in [0,T]}\|\tilde \Psi^\e(t)\|_{\CC^{-1/2-\kappa}_\e} 
 &\leq r \;,
 \\
 \label{e:assump-Psi-2}
  \sup_{t\in [0,T]}  \|\e^4 \mrd_\e^* ( \mrd_\e \tilde \Psi^\e)^3(t)\|_{\CC^{-3/2-2\kappa}_\e}
  & \leq r \e^{\frac{\kappa}{2} }\;.
\end{align}
\end{assumption}


By  Lemma \ref{lem:eps-improve-reg}\ref{pt:w_Deps}, \eqref{eq:v_bound} implies for $t\in [0,T]$,
\begin{equ}
  \label{assmp:bd_v}
\norm{ \e^{1-\alpha} \mrd_\e \widehat v^\e(t)} _{L^\infty_\e} 
\lesssim \norm{\widehat v^\e(t)} _{\CC_\eps^\alpha} 
\le R_v,
\end{equ}
and \eqref{e:assump-Psi-1} implies for $t\in[0,T]$,
\begin{equ}
  \label{eq:bd_Psi_i}
  \| \e^{3/2+\kappa}\mrd_\e \tilde \Psi^\e(t) \|_{L^\infty_\e}
  \lesssim
 \| \tilde \Psi^\e(t)\|_{\CC^{-1/2-\kappa}_\e} \leq r.
\end{equ}

For the cubic term in \eqref{eq:v_i}, we rewrite it as 
\begin{align}
\label{eq:cubic_expansion}
&\eps^{4}\mrd_\e^* \left(\mrd_\e\tilde \Psi^\e (t) + \mrd_\e P^\e_t \theta^\e(0) + \mrd_\e \widehat v^\e(t)\right)^3\nonumber 
=
\sum_{p=0}^3 \sum_{q=0}^{3-p}
 \binom{3}{p} \binom{3-p}{q}\mathfrak{C}^{p,q} (t),
\end{align}
where, for $p,q\ge 0$ with $p+q\leq 3$,
\begin{equ} 
  \label{eq:Cpq} 
  \mathfrak{C}^{p,q}(t)\eqdef\eps^{4}\mrd_\e^* \left((\mrd_\eps \tilde \Psi^\e)^p(t) (\mrd_\e P^\e_t \theta^\e(0))^q (\mrd_\e \widehat v^\e)^{3-p-q}(t)\right).
\end{equ}

The  term $\mathfrak{C}^{3,0}$ is bounded by assumption \eqref{e:assump-Psi-2}. For the other terms, we have the following bound.
\begin{lemma}
\label{lm:cubic_bd}
Under Assumptions \ref{asmpt:v} and  \ref{as:models_abstract}, 
there exists $C>0$
such that for nonnegative integers $p, q$ satisfying $p<3$ and $ p+q\leq 3$, we have
\begin{equ}[e:cubic_bd]
\norm{\mathfrak{C}^{p,q}(t)}_{\CC^{-1}_{\e}}\leq 
C\e^{\kappa}  
r^p \,M^q\,R_v^{3-p-q} \,
(1\wedge t)^{
\frac{1+3\eta-\kappa}{2}
\wedge
\frac{\eta-3\kappa}{2} \wedge 0 
}.
\end{equ}
\end{lemma}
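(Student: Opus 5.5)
The strategy is to avoid products of distributions altogether and bound everything in $L^\infty_\e$, paying with powers of $\e$. First, remove the outer $\mrd_\e^*$. By Remark~\ref{rem:scalar-vector-convention} and Lemma~\ref{lem:eps-improve-reg}\ref{pt:w_D}, $\|\mrd_\e^* F\|_{\CC^{-1}_\e}\lesssim \|F\|_{\CC^0_\e}\simeq\|F\|_{L^\infty_\e}$. It therefore suffices to show
\[
\e^4\big\|(\mrd_\e\tilde\Psi^\e)^p(\mrd_\e P^\e_t\theta^\e(0))^q(\mrd_\e\widehat v^\e)^{3-p-q}(t)\big\|_{L^\infty_\e}
\lesssim \e^{\kappa} r^pM^qR_v^{3-p-q}(1\wedge t)^{\cdots}.
\]
The left side is at most $\e^4\|\mrd_\e\tilde\Psi^\e\|_{L^\infty_\e}^p\|\mrd_\e P^\e_t\theta^\e(0)\|_{L^\infty_\e}^q\|\mrd_\e\widehat v^\e\|_{L^\infty_\e}^{3-p-q}$. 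Each factor is then estimated separately:
\begin{itemize}
\item by \eqref{eq:bd_Psi_i}, $\|\mrd_\e\tilde\Psi^\e\|_{L^\infty_\e}\lesssim r\,\e^{-3/2-\kappa}$;
\item by \eqref{assmp:bd_v}, $\|\mrd_\e\widehat v^\e\|_{L^\infty_\e}\lesssim R_v\,\e^{\alpha-1}$.
\end{itemize}

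For the initial-condition factor, interpolate between two estimates.
\begin{itemize}
\item \emph{Crude bound.} Lemma~\ref{lem:eps-improve-reg}\ref{pt:w_Deps} and boundedness of $P^\e_t$ on $\CC^\eta_\e$ give $\|\mrd_\e P^\e_t\theta^\e(0)\|_{L^\infty_\e}\lesssim \e^{\eta-1}M$, with no time singularity.
\item \emph{Smoothing bound.} Standard discrete heat kernel estimates give $\|P^\e_t f\|_{\CC^{1}_\e}\lesssim t^{(\eta-1)/2}\|f\|_{\CC^\eta_\e}$, hence $\|\mrd_\e P^\e_t\theta^\e(0)\|_{L^\infty_\e}\lesssim M\,t^{(\eta-1)/2}$.
\end{itemize}
Taking a geometric mean yields, for any $\theta\in[0,1]$,
\[
\|\mrd_\e P^\e_t\theta^\e(0)\|_{L^\infty_\e}\lesssim M\,\e^{(\eta-1)(1-\theta)}t^{\theta(\eta-1)/2}.
\]
This freedom lets us trade powers of $\e$ for time blow-up.

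Next comes the exponent bookkeeping, case by case over $(p,q)$ with $p\le2$. The total power of $\e$ is
\[
4-p(3/2+\kappa)+q(\eta-1)(1-\theta)+(3-p-q)(\alpha-1),
\]
and it must be at least $\kappa$.
\begin{itemize}
\item \emph{Case $q=0$.} Here $\alpha>3\kappa$ gives the claim for all $p\le 2$. For instance, $p=2$ yields $4-3-2\kappa+\alpha-1>\kappa$.
\item \emph{Case $q\ge1$.} Choose $\theta$ so that the $\e$-power equals exactly $\kappa$, or take $\theta=0$ when it is already large enough. The worst cases are $p=2,q=1$ and $p=0,q=3$.
\end{itemize}
For $(p,q)=(2,1)$ the available surplus before using the initial data is $1-2\kappa$. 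We need $(1-\eta)(1-\theta)\le 1-3\kappa$, giving a time power $\theta(\eta-1)/2$ which, for the admissible $\eta$, should land at or above $\frac{\eta-3\kappa}{2}$. For $(0,3)$ the surplus is $4$. We need $3(1-\eta)(1-\theta)\le 4-\kappa$, giving time power $\frac{3\theta(\eta-1)}{2}$, which equals $\frac{1+3\eta-\kappa}{2}$ after solving. This explains the first exponent in \eqref{e:cubic_bd}. The lower bound $\eta\ge-\frac23+\frac53\kappa$ is exactly what keeps $\theta\in[0,1]$ and the resulting powers integrable-compatible. The mixed cases $(1,1),(1,2),(0,1),(0,2),(2,0)$, etc.\ are intermediate and are bounded by the minimum of the two exponents (and $0$), using $1\wedge t\le1$.

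The main obstacle is this exponent optimisation. One must check that a single choice of $\theta$ per case yields both an $\e$-power of at least $\kappa$ and a time exponent no worse than $\frac{1+3\eta-\kappa}{2}\wedge\frac{\eta-3\kappa}{2}\wedge0$, uniformly over the range of $\eta$ and $\alpha$. Where a direct choice fails to match the stated exponent, I would first try also interpolating the $\widehat v^\e$ factor, using $\alpha\le1$ versus the $\CC^\alpha$ bound. Failing that, I would accept a slightly worse $\e$-power and absorb it, since only $\e^\kappa$ is needed.
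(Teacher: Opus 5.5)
Your proposal is correct and is essentially the paper's proof. The paper also moves $\mrd_\e^*$ into $L^\infty_\e$, bounds each factor separately, and treats the initial-data factor via $\|\e^{1+\nu}\mrd_\e P^\e_t\theta^\e(0)\|_{L^\infty_\e}\le\|P^\e_t\theta^\e(0)\|_{\CC^{-\nu}_\e}\lesssim(1\wedge t)^{\frac{\eta+\nu}{2}\wedge0}M$, choosing $\nu$ so that the $\e$-power is exactly $\kappa$; this is your $\theta$-interpolation with $1+\nu=(1-\eta)(1-\theta)$, where $(2,1)$ and $(0,3)$ give exactly the two stated exponents and the mixed case $(1,2)$ gives their average. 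One side remark of yours is off: the lower bound $\eta\ge-\frac23+\frac53\kappa$ plays no role in this lemma, since your $\theta$ stays in $[0,1]$ after clipping at $0$ for any $\eta\le0$; it is only needed later for time-integrability in Proposition~\ref{prop:remainder}.
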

\begin{proof}

By Lemma \ref{lem:eps-improve-reg}\ref{pt:w_Deps}, for $p<3$,
\begin{equs}[e:bound-Cpq]
{}&\norm{\mathfrak{C}^{p,q}(t)}_{\CC^{-1}_{\e}} 
= \norm{\eps^{4}\mrd_\e^* \left((\mrd_\eps \tilde \Psi^\e)^p (t)(\mrd_\e P^\e_t \theta^\e(0))^q (\mrd_\e \widehat v^\e)^{3-p-q}(t)\right)}_{\CC^{-1}_{\e}}
\\
&\quad \lesssim \norm{\eps^{4} (\mrd_\eps \tilde \Psi^\e)^p (t)(\mrd_\e P^\e_t \theta^\e(0))^q (\mrd_\e \widehat v^\e)^{3-p-q}(t)}_{L^\infty_{\e}}
\\
&\quad \leq \e^{\kappa} \|\e^{3/2+\kappa} \mrd_\e \tilde \Psi^\e\|^p_{L^\infty_\e}  \| \e^{1+\nu} \mrd_\e P^\e_t \theta^\e(0) \|_{L^\infty}^q  \| \e^{1-\alpha} \mrd_\e \widehat v^\e\| _{L^\infty_\e}^{3-p-q},
\end{equs}
where
 for $q\neq0$,
 we introduce $\nu$ such that
\[
\eps^4 = \eps^\kappa \left(\eps^{3/2+\kappa}\right)^p \left(\eps^{1+\nu}\right)^q \left(\eps^{1-\alpha}\right)^{3-p-q}.
\]
Note that for all $p, q$ satisfying $p<3$ and $ p+q\leq 3$, we have
\begin{equ}[e:cubic-4]
\kappa+ p(3/2+\kappa) +(3-p-q)(1-\alpha) <4
\end{equ}
because the maximum of the left-hand side occurs at $(p,q)=(2,0)$ in which case the left-hand side 
is $4+3\kappa-\alpha<4$ using the assumption $ 3\kappa<\alpha$.
This justifies the last inequality in \eqref{e:bound-Cpq} and ensures $\nu\ge -1$ when $q>0$.

Notice that for $t\in[0,T]$, following Lemma \ref{lem:eps-improve-reg}\ref{pt:w_Deps} (the condition of which is verified since $\nu\ge -1$)
and the heat kernel estimate, we have 
\begin{align}
\label{eq:bd_initial}
\norm{\e^{1+\nu} \mrd_\e P^\e_t \theta^\e(0)}_{L^\infty_\e} \leq \norm{P^\e_t \theta^\e(0)}_{\CC^{-\nu}_\e} \leq  (1\wedge t)^{\frac{\eta+\nu}{2}\wedge 0 }\norm{\theta^\e(0)}_{\CC^\eta_\e}.
\end{align} 
Applying  \eqref{assmp:bd_v} for $\norm{ \e^{1-\alpha} \mrd_\e \widehat v^\e(t)} _{L^\infty_\e}$, \eqref {eq:bd_Psi_i} for $\| \e^{3/2+\kappa}\mrd_\e \tilde \Psi^\e \|_{L^\infty_\e}$, together with \eqref{eq:bd_initial} yields \eqref{e:cubic_bd}. 
\end{proof}

Note that for $(p,q)=(3,0)$, \eqref{e:cubic-4} does not hold, so we need to exclude the term  $\mathfrak{C}^{3,0}$; however,
for the remainder term, this is not necessary and we have the following 
\begin{lemma}\label{lm:fifth_bd}
 Under Assumptions \ref{asmpt:v} and  \ref{as:models_abstract},
there exists a constant $C_M>0$ that only depends on initial data,  such that 
 \begin{equ}
  \label{eq:RemainderTermFifth}
 \norm{\e^{-2} \mrd_\e^*\widehat{\mathfrak{R}}(\e^2 \mrd_\e \widehat\theta^\e(t)) }_{\CC^{-1}_{\e}}
 \leq 
 C_M 
 \e^\kappa 
  (1+r + R_v)^5\,
  (1\wedge t)^{
 \frac{1+\eta-5\kappa}{2} \wedge \frac{5 \eta+3 -\kappa }{2} \wedge 0}.
\end{equ}
 \end{lemma}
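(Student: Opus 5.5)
The plan is to mimic the proof of Lemma~\ref{lm:cubic_bd}, using $|\widehat{\mathfrak R}(z)|\le C|z|^5$ globally to avoid any stochastic input. First, by Lemma~\ref{lem:eps-improve-reg}\ref{pt:w_D} (in the form for $\mrd_\e^*$ from Remark~\ref{rem:scalar-vector-convention}) and the embedding $L^\infty_\e\hookrightarrow\CC^0_\e\hookrightarrow \CC^{-1}_\e$, I bound
\[
\norm{\e^{-2} \mrd_\e^*\widehat{\mathfrak{R}}(\e^2 \mrd_\e \widehat\theta^\e(t)) }_{\CC^{-1}_{\e}}
\lesssim \e^{-2}\norm{\widehat{\mathfrak R}(\e^2\mrd_\e\widehat\theta^\e(t))}_{L^\infty_\e}
\lesssim \e^{8}\norm{\mrd_\e\widehat\theta^\e(t)}_{L^\infty_\e}^5 .
\]
Next I split $\mrd_\e\widehat\theta^\e=\mrd_\e\tilde\Psi^\e+\mrd_\e P^\e_t\theta^\e(0)+\mrd_\e\widehat v^\e$ and use $(a+b+c)^5\lesssim a^5+b^5+c^5$, so it suffices to bound $\e^8$ times the fifth power of each piece in $L^\infty_\e$.

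For the noise piece, \eqref{eq:bd_Psi_i} gives $\e^8\|\mrd_\e\tilde\Psi^\e\|^5_{L^\infty_\e}\lesssim \e^{8-5(3/2+\kappa)}r^5=\e^{1/2-5\kappa}r^5\le \e^\kappa r^5$ for $\kappa$ small. For the remainder, \eqref{assmp:bd_v} gives $\e^{8-5(1-\alpha)}R_v^5=\e^{3+5\alpha}R_v^5$, which is harmless. The initial-data piece is where the time weight comes from. Here I would write $\e^8=\e^\kappa(\e^{1+\nu})^5$, i.e.\ $\nu=(7-\kappa)/5-1=(2-\kappa)/5$, which is $\ge-1$ as needed. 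Then \eqref{eq:bd_initial} yields a factor $M^5(1\wedge t)^{5(\eta+\nu)/2\wedge0}=M^5(1\wedge t)^{\frac{5\eta+2-\kappa}{2}\wedge 0}$.

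The main obstacle is matching the exact exponent stated, $\frac{1+\eta-5\kappa}{2}\wedge\frac{5\eta+3-\kappa}{2}\wedge 0$. The product structure suggests the authors did not expand fully but interpolated: they used a less wasteful distribution of the $\e$-powers among the five factors, possibly keeping mixed terms. If the crude choice above gives a worse exponent than claimed, I would expand $(a+b+c)^5$ into monomials $a^pb^qc^{5-p-q}$. For each monomial I would choose $\nu$ via $\e^8=\e^\kappa(\e^{3/2+\kappa})^p(\e^{1+\nu})^q(\e^{1-\alpha})^{5-p-q}$, which is feasible since $p(3/2+\kappa)+\kappa<8$ for $p\le5$. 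I would then bound each monomial as in \eqref{e:bound-Cpq}, giving the weight $(1\wedge t)^{q(\eta+\nu)/2\wedge0}$. Finally I would check that the worst case over $(p,q)$ is no worse than the stated exponent, using $\eta>-2/3+5\kappa/3$. Since we may freely spend positive powers of $\e$ (up to leaving $\e^\kappa$), I can also increase $\nu$ in the $q=5$ term to improve the time exponent to at least $\frac{5\eta+3-\kappa}{2}$. The constant $C_M$ collects powers of $M$ and is bounded by $(1+r+R_v)^5$ times a constant depending only on $M$.
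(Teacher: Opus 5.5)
Your route is correct once one arithmetic slip is fixed, and it is simpler than the paper's. From $\e^8=\e^\kappa(\e^{1+\nu})^5$ you get $5\nu=3-\kappa$, i.e.\ $\nu=(3-\kappa)/5$, not $(2-\kappa)/5$. This is also the largest admissible $\nu$, since the $\e$-budget is then exactly used up, so your phrase ``improve to at least'' should read ``exactly''.

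With this value, \eqref{eq:bd_initial} gives precisely $M^5(1\wedge t)^{\frac{5\eta+3-\kappa}{2}\wedge 0}$. The ``main obstacle'' you describe is therefore only an artifact of the slip, and you do not need the fallback of expanding into monomials. The first entry of the stated minimum is also inactive. Under Assumption~\ref{asmpt:v} one has $\frac{1+\eta-5\kappa}{2}\ge\frac16-\frac{5\kappa}{3}>0$, so your exponent coincides with the stated one. The pure noise piece ($\e^{1/2-5\kappa}r^5\le\e^\kappa r^5$ for $\kappa\le 1/12$) and the $\widehat v^\e$ piece carry no time weight. They are absorbed because the exponent is $\le 0$, so $(1\wedge t)^{\cdots}\ge1$.

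The paper proceeds differently. It expands $(\mrd_\e\widehat\theta^\e)^5$ fully into the monomials $R_{p,q}$ and bounds each one as in Lemma~\ref{lm:cubic_bd}, with a separate $\nu$ for each monomial. The mixed monomial $(p,q)=(4,1)$ is what produces the extra (inactive) exponent $\frac{1+\eta-5\kappa}{2}$, and $(0,5)$ produces $\frac{5\eta+3-\kappa}{2}$.

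The full expansion is essential for the cubic term. There the pure-noise cube $\mathfrak C^{3,0}$ cannot be bounded in $L^\infty$ with only $\e^4$ available, and must be treated stochastically. In the quintic term, every pure fifth power already fits inside the $\e^8$ budget. Hence the convexity bound $|a+b+c|^5\le 3^4(|a|^5+|b|^5+|c|^5)$ suffices, and it avoids all bookkeeping of mixed terms. The paper's version buys only parallelism with the cubic lemma.
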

\begin{proof}
Following Lemma \ref{lem:eps-improve-reg}\ref{pt:w_Deps} and the global bound $|\widehat{\mathfrak R}(x)|\lesssim |x|^5$ from Section~\ref{sec:derivation}, we have
\begin{align*}
\norm{\e^{-2} \mrd_\e^*\widehat{\mathfrak{R}}(\e^2 \mrd_\e \widehat\theta^\e(t)) }_{\CC^{-1}_\e} 
&\leq \norm{\e^{-2}\widehat{\mathfrak{R}}(\e^2 \mrd_\e \widehat\theta^\e(t)) }_{L^\infty_\e}
\\
&
 \leq \norm{\e^{8} (\mrd_\e \widehat\theta^\e)^5(t) }_{L^\infty_\e} \lesssim \sum_{p,q} \norm{R_{p,q}(t)}_{L^\infty_\e},
\end{align*}
where $p,q$ are nonnegative integers such that $0\leq p+q\leq 5$, and 
\[
R_{p,q} \eqdef \eps^{8} (\mrd_\eps \tilde \Psi^\e)^p (t)(\mrd_\e P^\e_t \theta^\e(0))^q (\mrd_\e \widehat v^\e)^{5-p-q}(t)\;.
\]
As in the proof of the last lemma, for $q\neq 0$, we introduce $\nu$ such that 
\begin{align*}
  \e^8 = \e^\kappa \left(\e^{3/2+\kappa}\right)^p \left(\e^{1+\nu}\right)^q \left(\e^{1-\alpha}\right)^{5-p-q}.
\end{align*}
For all $p \geq 0$, $q>0 $ satisfying $p+q\leq 5$, we have 
\begin{align*}
  \kappa+ p(3/2+\kappa) +(5-p-q)(1-\alpha) <8.
\end{align*}
Indeed, the left-hand side is increasing in $p$ and decreasing in $q$,
so even at $(p,q)=(5,0)$  the left-hand side is $6 \kappa +\frac{15}{2}<8$. This ensures $\nu\ge -1$ when $q>0$.

Applying  \eqref{assmp:bd_v}  for $\norm{ \e^{1-\alpha} \mrd_\e \widehat v^\e(t)} _{L^\infty_\e}$, \eqref {eq:bd_Psi_i} for $\| \e^{3/2+\kappa}\mrd_\e \tilde \Psi^\e \|_{L^\infty_\e}$, and \eqref{eq:bd_initial} for $ \norm{\mrd_\e P^\e_t \theta^\e_i(0)}^q_{L^\infty_\e}$, we have 
\begin{align*}
&\norm{R_{p,q}(t)}_{L^\infty_{\e}} \lesssim
 \e^\kappa r^p M^q R_v^{5-p-q} 
\begin{cases*}
(1\wedge t)^{\frac{1+\eta-5\kappa}{2} \wedge \frac{5 \eta+3 -\kappa }{2} \wedge 0} , & $q\geq 1$
\\
 (1\wedge t)^0, 
 & $q=0$.
\end{cases*}
\end{align*}
Indeed, applying \eqref{eq:bd_initial} on $ \norm{\mrd_\e P^\e_t \theta^\e_i(0)}^q_{L^\infty_\e}$ yields $(1\wedge t)^{q(\eta+\nu)/2 \wedge 0}$.
For each $q\ge 1$, this exponent is decreasing in $p$,
so in the worst case $p=5-q$, one has
\[
q(\eta+\nu)
\ge
\frac12-6\kappa
+
q\Big(\eta+\frac12+\kappa\Big).
\]
The minimum occurs at $q=1$ or $q=5$, which gives the exponent in \eqref{eq:RemainderTermFifth}. This concludes the proof.
\end{proof}

\subsection{Stochastic estimates}

Below we will write 
\[
\int_{\T_\eps^d} \; dy = \eps^d \sum_{y \in \T_\eps^d} \;,
\qquad 
\int_{\obonds_{\e,i}} \;dy=\eps^d \sum_{y \in \obonds_{\e,i}} \]
and we often identify $\obonds_{\e,i}$ for a fixed $i$ with $\T_\eps^d$.

Recall that for the solution of the  stochastic equation with zero initial condition, $\tilde\Psi$, we have 
\begin{equ}[e:Psi-formula]
\tilde\Psi_i^\e (t,x)
= \int_0^t \int_{\obonds_{\e,i}} P^\eps_{t-s} (x-y)\, 
 \xi^\eps_i(s,y)\,dy\,ds
\end{equ}
 where $P^\eps$ is the discrete heat kernel on $\obonds_{\eps,i}$. It is a singular kernel of order $-3$ in the sense that, uniformly in $t>0$, $\e>0$, and $|k|\geq 1$,
\begin{equ}[e:P-bound]
\begin{aligned}
|P^\e(t,x)|
&\lesssim 1+(|x|+\sqrt t+\e)^{-3},
\\
|(D^\e)^kP^\e(t,x)|
&\lesssim (|x|+\sqrt t+\e)^{-3-k},
\end{aligned}
\end{equ}
where $(D^\e)^k$ is a $k$-th spatial derivative (see \cite[Lemma~5.3]{HM18} and \cite[(6.12) and Appendix~B]{SSSX21}).
The first estimate includes the spatially constant mode of the torus heat kernel, while all the positive-order differences annihilate that mode and the remaining long-time part decays exponentially.
Our goal is to prove the stochastic estimates in Lemma~\ref{lm:ProbEstimate}.
We begin with some kernel bounds.

Fix $1\leq a<b\leq3$.
Recall that $\mrd_\e \tilde\Psi^\e$ is a $2$-form, defined on plaquettes.
Consider a plaquette $p=(x,\e_a,\e_b)$,
and write bonds $e_a= (x,x+\e_a)$ and $e_b=(x,x+\e_b)$. Define
\begin{equ}[e:def-X]
    X_\e(t,x)\eqdef
(\mrd_\e\tilde\Psi^\e)(t,p)
    =
    D^\e_a\tilde\Psi_b^\e(t,e_b)
    -
    D^\e_b\tilde\Psi_a^\e(t,e_a).
\end{equ}
Let 
\[
C^\e_{t,s}(x,y) =C^\e_{t,s}(x-y) \eqdef 
\E [X_\e (t,x)X_\e(s,y)],
\qquad
c_t^\e \eqdef C_{t,t}^\e(0).
\]
Clearly $C^\e_{t,s}(x-y) $ does not depend on $a,b$.
Since the discrete white noise and the discrete heat semigroup are
spatially translation invariant, $X_\e$ is stationary in space.
Consequently, $c_t^\e=\E[X_\e(t,x)^2]$ is independent of $x$.

\begin{lemma}\label{lem:kernel-C}
One has $C^\e_{t,s}(x,y) =C^\e_{t,s}(y,x)=C^\e_{s,t}(x,y)$.
Moreover, uniformly in $\eps$ and all the space and time variables,
\begin{equ}[e:C-bound]
\left|C^\e_{t,s}(x,y)\right|\lesssim 
(|x-y|+\sqrt {|t-s|}+\e)^{-3},
\end{equ}
and, uniformly in $\rho\in [0,1]$,
\begin{equ}[e:C-diff-bound]
| C_{t,t}^\e(x)
   -   C_{t,s}^\e(x)|
 \lesssim
    |t-s|^\rho ( |x| +\e)^{-3-2\rho}.
\end{equ}
\end{lemma}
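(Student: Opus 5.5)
We write $X_\e$ via \eqref{e:Psi-formula} as a stochastic integral against the discrete white noise. With $K^{(b)}_{t}(x-y)\eqdef D^\e_a P^\e_t(x-y)$ for the $\xi_b$ contribution and $-D^\e_b P^\e_t(x-y)$ for the $\xi_a$ contribution (up to the half-lattice shifts between $\obonds_{\e,a}$ and $\obonds_{\e,b}$, which only translate kernels by $O(\e)$), we have
\[
X_\e(t,x)=\sum_{i\in\{a,b\}}\int_0^t\!\!\int K^{(i)}_{t-r}(x-y)\,\xi^\e_i(r,y)\,dy\,dr .
\]
By independence of $\xi_a,\xi_b$ and the Itô isometry for $\xi^\e$, we get
\[
C^\e_{t,s}(x-y)=\sum_{i}\int_0^{t\wedge s}\!\!\int K^{(i)}_{t-r}(x-z)K^{(i)}_{s-r}(y-z)\,dz\,dr .
\]
The symmetry $C^\e_{t,s}=C^\e_{s,t}$ is read off directly. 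The symmetry in $(x,y)$ follows from evenness of the discrete heat kernel, which makes each $K^{(i)}$ odd; the product of two odd kernels is invariant under $(x,y,z)\mapsto(-x,-y,-z)$, so $C(x-y)=C(y-x)$. Independence of $a,b$ follows from lattice rotation symmetry.

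For \eqref{e:C-bound}, each $K^{(i)}$ is a first difference of $P^\e$. By \eqref{e:P-bound} with $|k|=1$, it satisfies $|K_t(x)|\lesssim(|x|+\sqrt t+\e)^{-4}$, i.e. it is a kernel of parabolic order $-4$ in dimension $5$ (parabolic scaling $3+2$), regularized at scale $\e$. We then use the standard convolution estimate: the space–time convolution of two kernels of order $-4$ gives order $-4-4+5=-3$. Explicitly, we would bound
\[
\int_0^{t\wedge s}\!\!\int (|x-z|+\sqrt{t-r}+\e)^{-4}(|y-z|+\sqrt{s-r}+\e)^{-4}dz\,dr .
\]
We split the $z$-integral according to which of $|x-z|,|y-z|$ is larger than $|x-y|/2$, and extend the $r$-integral to $(-\infty,t\wedge s]$. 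Since both integrands are integrable in $(r,z)$ near the diagonal with order $-4>-5$, this yields $(|x-y|+\sqrt{|t-s|}+\e)^{-3}$; the $|t-s|$ dependence comes from the shift between $\sqrt{t-r}$ and $\sqrt{s-r}$. The long-time part is harmless because the constant mode is killed by differences and the rest decays exponentially. Alternatively, one can work in Fourier: $\hat C(k)\sim |\hat\nabla_\e k|^2 e^{-|t-s|\lambda_\e(k)}/\lambda_\e(k)$ for $t=s\to\infty$, and summing over $|k|\lesssim\e^{-1}$ gives the same bound.

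For \eqref{e:C-diff-bound}, with $s\le t$ say, we write
\[
C^\e_{t,t}-C^\e_{t,s}=\int_0^t K_{t-r}(x-z)\bigl[K_{t-r}(-z)-K_{s-r}(-z)\bigr]\,dz\,dr+(\text{the part of }r\in[s,t]) .
\]
We interpolate between the trivial bound, $\rho=0$, which is \eqref{e:C-bound} applied twice, and a $\rho=1$ bound. For the latter, we use $\partial_tK=\Delta_\e K$, so $|K_{t}-K_{s}|\lesssim|t-s|\,(|z|+\sqrt{s}+\e)^{-6}$, and the part over $r\in[s,t]$ is bounded by $|t-s|$ times the supremum of the integrand. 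This produces a kernel of order $-5$ and hence a bound $|t-s|(|x|+\e)^{-5}$. Interpolating as $\min(A,B)\le A^{1-\rho}B^\rho$ gives the claim for all $\rho\in[0,1]$.

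The main obstacle is the $\rho=1$ endpoint. Near $r\approx s$, the factor $(\sqrt{s-r}+|z|+\e)^{-6}$ is not integrable against the other kernel when $|z|$ is small and only $\e$ regularizes. We will handle this by keeping $\e$ in all denominators, and by using that only the regime $|z|\gtrsim|x|$ or $|x-z|\gtrsim|x|$ matters after splitting, so that the singular factor is evaluated away from its singularity and the order count $-4-6+5=-5$ goes through uniformly in $\e$. If this fails, we would fall back on the Fourier representation, where $1-e^{-|t-s|\lambda}\le(|t-s|\lambda)^\rho$ directly gives the $\rho$-interpolated bound.
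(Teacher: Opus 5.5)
\paragraph{The symmetry and \eqref{e:C-bound} are fine; the gap is the difference bound.}
Your symmetry argument and your proof of \eqref{e:C-bound} via the space--time convolution of two order $-4$ kernels are essentially fine. The latter is a heavier but valid alternative to the paper's route. The paper uses summation by parts and the semigroup property to write $C^\e_{t,s}(x)=\frac12\sum_i\int_{|t-s|}^{t+s}(D^\e_i)^*D^\e_iP^\e_u(x)\,du$, and then only integrates a single order $-5$ kernel in $u$.

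The gap is in \eqref{e:C-diff-bound}. Your interpolation rests on the endpoint bound $|t-s|(|x|+\e)^{-5}$, and your argument does not produce it. Take $h=t-s$ with $\e\ll\sqrt h\ll |x|$.
\begin{itemize}
\item Applying the mean value bound for all $r<s$ gives the kernel $(|z|+\sqrt{s-r}+\e)^{-6}$. Its integral near $(r,z)=(s,0)$ is of order $\e^{-1}$, so you get $h\e^{-1}(|x|+\e)^{-4}$. Your spatial splitting cannot avoid this point, because the region $\{|x-z|\ge |x|/2\}$ contains $z=0$. Keeping $\e$ in the denominators is exactly what produces the bad factor $\e^{-1}$.
\item Even the best split does not help: use the triangle inequality for $s-r\le h$ and the mean value bound only for $s-r>h$. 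The same issue arises in the piece $r\in[s,t]$, where $\int |K_u(x-z)||K_u(z)|\,dz$ is only bounded by $(|x|+\e)^{-4}(\sqrt u+\e)^{-1}$.
\item With the polynomial majorants of \eqref{e:P-bound} you therefore get $\sqrt h\,(|x|+\e)^{-4}$. This is the true size of the integral of those majorants, coming from $|z|\lesssim\sqrt{s-r}\le\sqrt h$. It exceeds $h|x|^{-5}$ by the factor $|x|/\sqrt h$.
\item Interpolation then gives \eqref{e:C-diff-bound} only for $\rho\in[0,1/2]$. That would happen to suffice for the later uses with $\rho=\kappa,\kappa/2$, but it is not the lemma as stated.
\item Your Fourier fallback does not repair this. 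Bounding $1-e^{-h\lambda}\le (h\lambda)^\rho$ term by term gives $h^\rho\sum_{|k|\lesssim\e^{-1}}|k|^{2\rho}\approx h^\rho\e^{-3-2\rho}$. This has no decay in $|x|$, and that decay is precisely what the $J_1$, $J_3$ estimates in Lemma~\ref{lm:ProbEstimate} need.
\end{itemize}

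\paragraph{The missing ingredient.}
You need the cancellation coming from the fact that $K$ is a finite difference; it is invisible once you take absolute values against polynomial majorants. The paper captures it via summation by parts, evenness of $P^\e$ and the semigroup property:
\[
\int D^\e_iP^\e_{t-r}(x-z)\,D^\e_iP^\e_{s-r}(y-z)\,dz=\bigl((D^\e_i)^*D^\e_iP^\e_{t+s-2r}\bigr)(x-y).
\]
This yields
\[
C^\e_{t,t}(x)-C^\e_{t,s}(x)=\tfrac12\sum_i\Bigl(\int_0^h+\int_{2t-h}^{2t}\Bigr)(D^\e_i)^*D^\e_iP^\e_u(x)\,du .
\]
By the $|k|=2$ case of \eqref{e:P-bound}, each piece is at most $\min\{h(|x|+\e)^{-5},(|x|+\e)^{-3}\}$. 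Then $\min\{A,B\}\le A^\rho B^{1-\rho}$ gives every $\rho\in[0,1]$.

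Alternatively, your absolute-value route would work with Gaussian-type off-diagonal decay for the discrete heat kernel and its differences, but \eqref{e:P-bound} does not provide this.

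\paragraph{Minor points.}
$C_{t,s}=C_{s,t}$ is not read off directly from your integral formula; it follows only after the $(x,y)$-reflection symmetry. Also, a forward difference of an even kernel is odd about $\e_i/2$, not about $0$. Neither affects the conclusion.
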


\begin{proof}
Since $\tilde\Psi^\e_a$ and $\tilde\Psi^\e_b$ are independent,
following \eqref{e:Psi-formula},
one has
\[
C^\e_{t,s}(x-y)=\sum_{i\in \{a,b\}}   
 \int_0^{t\wedge s}
 \int_{\T_\e^3}
        D^\e_i P_{t-r}^\e(x-z)
D^\e_i P_{s-r}^\e(y-z)
\,dz\,dr .
\]
By summation by parts and  the semigroup property of the heat kernel, the above expression is equal to 
\[
 \sum_{i\in \{a,b\}}      \int_0^{t\wedge s}
        (D^\e_i)^*D^\e_i P_{t+s-2r}^\e(x-y)
\,dr 
\]
which is symmetric in $s,t$.
Without loss of generality, we assume $t\ge s$, then 
following a change of variable, this is equivalent to
\begin{equ}[e:tsts]
  \sum_{i\in \{a,b\}}     \frac12 \int_{t-s}^{t+ s}
        (D^\e_i)^*D^\e_i P_{u}^\e(x-y) \,du
\end{equ}
whose absolute value, by \eqref{e:P-bound}, is bounded by
\begin{equ}[e:integrate-u]
 \lesssim  \int_{t-s}^{\infty}
     (|x-y|+\sqrt {u}+\e)^{-5}    \,du
 \lesssim    
  (|x-y|+\sqrt {t-s}+\e)^{-3} .
\end{equ}
The last inequality follows directly from the change of variable $\sqrt u =v$.
This proves \eqref{e:C-bound}.

Regarding \eqref{e:C-diff-bound}, without loss of generality, we assume $t\ge s$. By \eqref{e:tsts}, 
\[
| C_{t,t}^\e(x)
   -   C_{t,s}^\e(x)|
=
 \Big| \sum_{i\in \{a,b\}}   
  \frac12
  \Big(\int_0^{2t} -  \int_{t-s}^{t+ s}\Big)
        (D^\e_i)^*D^\e_i P_{u}^\e(x) \,du
\Big|.
\]
Note that $[0,2t]\backslash [t-s,t+s] = [0,h]\cup [2t-h,2t]$ with $h=t-s$.
Using  \eqref{e:P-bound} as above, we obtain
\[
\int_0^h  (|x|+\sqrt {u}+\e)^{-5}    \,du
\le
\min \{h (|x|+\e)^{-5}  ,(|x|+\e)^{-3} \}\,,
\]
where the first term follows from setting $u=0$ which makes the integrand larger, and the second one
follows the same way as in \eqref{e:integrate-u}.
Since $\min\{A,B\}\le A^\rho B^{1-\rho}$,
\[
\int_0^h  (|x|+\sqrt {u}+\e)^{-5}    \,du
\le
    |t-s|^\rho ( |x| +\e)^{-3-2\rho}.
\]
For the integral over $[2t-h,2t]$, the same argument leads to the same bound
 (it is indeed more straightforward since $[2t-h,2t]$ does not contain the singularity of $P^\e$).
Therefore, 
$| C_{t,t}^\e(x)  -   C_{t,s}^\e(x)|$ is bounded by the right-hand side of \eqref{e:C-diff-bound}.
\end{proof}

We next study the behavior of 
$ c^\e_t  =\E[(\mrd_\e \tilde\Psi(t,p))^2] $, whose value does not depend on the specific plaquette $p$.

\begin{lemma} \label{lm:renom_cst}
One has
\begin{equ} \label{eq:cst_dPsisquare}
\eps^3 c^\e_t 
\lesssim 1.
\end{equ}
Moreover, uniformly in $\rho\in [0,1]$,
\begin{equ} \label{eq:ct-increment}
   \eps^3  | c_t^\e-c_s^\e |
    \lesssim
    |t-s|^{\rho}
    \e^{-2\rho}.
\end{equ}
\end{lemma}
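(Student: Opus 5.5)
Both bounds will be read off directly from Lemma~\ref{lem:kernel-C}, since $c^\e_t=C^\e_{t,t}(0)$.

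\textbf{Uniform bound.} Apply \eqref{e:C-bound} with $t=s$ and $x=y$. This gives
\[
|c^\e_t| \lesssim (0+0+\e)^{-3}=\e^{-3},
\]
uniformly in $t$ and $\e$, which is \eqref{eq:cst_dPsisquare}. If one prefers a more transparent check, the formula \eqref{e:tsts} at $s=t$, $x=y$ reads
\[
c^\e_t=\tfrac12\sum_{i\in\{a,b\}}\int_0^{2t}(D^\e_i)^*D^\e_iP^\e_u(0)\,du .
\]
The integrand is bounded by $(\sqrt u+\e)^{-5}$ by \eqref{e:P-bound}, and $\int_0^\infty(\sqrt u+\e)^{-5}du\lesssim\e^{-3}$ after substituting $u=\e^2 v$.

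\textbf{Increment bound.} Assume without loss of generality $t\ge s$. From \eqref{e:tsts},
\[
c^\e_t-c^\e_s=\tfrac12\sum_{i}\int_{2s}^{2t}(D^\e_i)^*D^\e_iP^\e_u(0)\,du .
\]
With $h=2(t-s)$ and again using \eqref{e:P-bound}, the absolute value is bounded by $\int_{2s}^{2s+h}(\sqrt u+\e)^{-5}du$. This is at most $\min\{h\e^{-5},\e^{-3}\}$: the first bound comes from replacing the integrand by its value at $u=0$, the second from extending the integral to $[0,\infty)$.

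Interpolating with $\min\{A,B\}\le A^\rho B^{1-\rho}$ gives
\[
|c^\e_t-c^\e_s|\lesssim |t-s|^\rho\,\e^{-3-2\rho}.
\]
Multiplying by $\e^3$ yields \eqref{eq:ct-increment}. Alternatively, one can use the decomposition $c^\e_t-c^\e_s=(C^\e_{t,t}(0)-C^\e_{t,s}(0))+(C^\e_{t,s}(0)-C^\e_{s,s}(0))$ together with \eqref{e:C-diff-bound} at $x=0$ and the symmetry $C^\e_{t,s}=C^\e_{s,t}$.

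There is no real obstacle. The only point needing care is that the bound in \eqref{e:P-bound} for the differentiated kernel $(D^\e_i)^*D^\e_iP^\e$ has no constant-mode term. This holds because the differences annihilate the zero Fourier mode, so the $u$-integral converges at infinity uniformly in $t$.
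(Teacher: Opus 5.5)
Your proposal is correct and matches the paper's proof. The first bound is exactly \eqref{e:C-bound} at $x=y$, $t=s$. For the increment, your direct computation of $c^\e_t-c^\e_s$ from \eqref{e:tsts} over the single interval $[2s,2t]$ is the $x=0$ case of the argument proving \eqref{e:C-diff-bound}; the paper instead writes $|c^\e_t-c^\e_s|\le|c^\e_t-C^\e_{s,t}(0)|+|C^\e_{s,t}(0)-c^\e_s|$ and cites \eqref{e:C-diff-bound} at $x=0$, which is the alternative you already mention.
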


\begin{proof}
Taking $x=y$ and $t=s$ in \eqref{e:C-bound}, 
we immediately obtain \eqref{eq:cst_dPsisquare}.
The second bound \eqref{eq:ct-increment} follows by 
writing 
\[
 | c_t^\e-c_s^\e |
\le
| c_t^\e - C^\e_{s,t}(0)|
+| C^\e_{s,t}(0)-c_s^\e |
\]
and applying \eqref{e:C-diff-bound} with $x=0$.
\end{proof}

The following lemma will justify Assumption \ref{as:models_abstract}.
\begin{lemma}
  \label{lm:ProbEstimate}
Let $T>0$, $p \geq 2$, and $0<\kappa<1$ sufficiently small. Then, for any $\tilde\kappa\in (0,\kappa)$,
\begin{align}
  \label{eq:SHE_moment}
 \E\sup_{t\in [0,T]}\|\tilde \Psi^\e(t)\|^p_{\CC^{-1/2-\kappa}_\e} & \le C_{p,T}  \;,
 \\
 \label{eq:third_mom}
  \E \sup_{t\in [0,T]} \norm{  \e^{4}\mrd_\e^* (\mrd_\e\tilde\Psi^\e )^3 (t)}^p_{\CC^{-3/2-2\kappa}_\e}
  & \le C_{p,T} \e^{p \tilde\kappa}.
\end{align}
Here, the constant $C_{p,T}>0$ may depend on $p,T,\kappa,\tilde\kappa$, but is independent of $\eps>0$.
\end{lemma}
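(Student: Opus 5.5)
The proof rests on Gaussian hypercontractivity combined with a discrete Kolmogorov argument. For \eqref{eq:SHE_moment} it is the standard estimate for the discrete stochastic heat equation in $d=3$. By \eqref{e:Psi-formula} and \eqref{e:P-bound}, the covariance of $\tilde\Psi^\e_i$ satisfies $|\E[\tilde\Psi^\e_i(t,x)\tilde\Psi^\e_i(s,y)]|\lesssim (|x-y|+\sqrt{|t-s|}+\e)^{-1}$, proved exactly as in Lemma~\ref{lem:kernel-C} but without the two derivatives. Integrating this against $\varphi^\lambda_x\otimes\varphi^\lambda_x$ gives $\E|\langle \tilde\Psi^\e(t),\varphi^\lambda_x\rangle_\e|^2\lesssim\lambda^{-1}$. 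The same kernel, together with an increment bound analogous to \eqref{e:C-diff-bound}, gives $\E|\langle \tilde\Psi^\e(t)-\tilde\Psi^\e(s),\varphi^\lambda_x\rangle_\e|^2\lesssim |t-s|^{\rho}\lambda^{-1-2\rho}$ for small $\rho>0$. Since the field is Gaussian, these upgrade to $p$-th moments. The supremum over $x\in\T^3_\e$ and dyadic $\lambda\in[\e,1]$ then costs only a factor $\lambda^{-3/p}$ by a union bound, which is absorbed by the $\kappa$-room. The supremum over $t\in[0,T]$ follows from Kolmogorov's criterion in time, using the $|t-s|^{\rho}$ gain.

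For \eqref{eq:third_mom}, Wick-decompose plaquette-wise with $X_\e$ from \eqref{e:def-X}:
\[
(\mrd_\e\tilde\Psi^\e)^3(t)=\;:\!(\mrd_\e\tilde\Psi^\e)^3\!:\!(t)+3c^\e_t\,\mrd_\e\tilde\Psi^\e(t).
\]
\textbf{Counterterm.} By Lemma~\ref{lm:renom_cst}, $\e^3c^\e_t\lesssim1$ uniformly in $t$, so
\[
\e^4c^\e_t\,\mrd^*_\e\mrd_\e\tilde\Psi^\e=(\e^3c^\e_t)\,\e\,\mrd_\e^*\mrd_\e\tilde\Psi^\e .
\]
Apply Lemma~\ref{lem:eps-improve-reg}\ref{pt:w_Deps} (via Remark~\ref{rem:scalar-vector-convention}) with $\alpha=-3/2-\kappa$ and $\bar\alpha=-3/2-2\kappa$, and then \ref{pt:w_D}. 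This gives
\[
\|\e\,\mrd_\e^*\mrd_\e\tilde\Psi^\e\|_{\CC^{-3/2-2\kappa}_\e}\lesssim\e^{\kappa}\|\tilde\Psi^\e\|_{\CC^{-1/2-\kappa}_\e},
\]
and the supremum in $t$ of this term is controlled by \eqref{eq:SHE_moment}.

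\textbf{Wick cube.} For the third-chaos term, summation by parts moves $\mrd_\e^*$ onto the test function, producing a factor $\lambda^{-1}$ and a new admissible test function, as in \eqref{eq:Inner_w_Dtest}. The second moment is then
\[
\E|\langle\e^4\mrd^*_\e:\!X^3\!:,\varphi^\lambda_x\rangle_\e|^2\lesssim \lambda^{-2}\e^{8}\iint|\Psi^\lambda_x(y)\Psi^\lambda_x(z)|\,|C^\e_{t,t}(y-z)|^3\,dy\,dz .
\]
By \eqref{e:C-bound}, $\e^8|C^\e_{t,t}(w)|^3\lesssim\e^8(|w|+\e)^{-9}\le\e^{2\tilde\kappa}(|w|+\e)^{-1-2\tilde\kappa}$, which is integrable in $d=3$. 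Integrating against the test functions gives a bound $\e^{2\tilde\kappa}\lambda^{-3-2\tilde\kappa}$. This matches regularity $-3/2-\tilde\kappa$ with a gain $\e^{\tilde\kappa}$, leaving room $\kappa$ versus $2\kappa$. Hypercontractivity in the third chaos gives $p$-th moments. The same union bound over $x$ and dyadic $\lambda$ converts the room between $-3/2-\tilde\kappa'$ and $-3/2-2\kappa$ into absorbing $\lambda^{-3/p}$ for $p$ large, with $\tilde\kappa<\tilde\kappa'<\kappa$. Lower $p$ then follows by Jensen.

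The main obstacle is the supremum over $t$. For time increments, write
\[
:\!X(t)^3\!:-:\!X(s)^3\!:
\]
as a telescoping sum whose second moment involves products of $C_{t,t}^\e$, $C^\e_{t,s}$ and $C^\e_{s,s}$, and use \eqref{e:C-diff-bound} to extract $|t-s|^\rho(|w|+\e)^{-3-2\rho}$ in one factor. This costs $\e^{-2\rho}$ at worst, but since the lattice is discrete in space only, a small $\rho$ gain suffices. The extra power of $\lambda$ or $\e$ is absorbed by shrinking $\tilde\kappa'$ slightly, and Kolmogorov in time on $[0,T]$ then concludes. The counterterm needs no separate time argument, because its time dependence is only through $\tilde\Psi^\e(t)$ and the bounded scalar $\e^3c^\e_t$; the increment bound \eqref{eq:ct-increment} is available if it is needed.
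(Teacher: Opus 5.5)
Your proposal is correct and follows essentially the paper's proof. The steps are: Wick decomposition of $X_\e^3$; second-moment bounds for the third chaos from \eqref{e:C-bound} and \eqref{e:C-diff-bound}; then hypercontractivity, the wavelet/union bound over scales, and Kolmogorov in time. The only differences are that you move $\mrd_\e^*$ onto the test function where the paper strips it off via Lemma~\ref{lem:eps-improve-reg}\ref{pt:w_Deps}, and that you handle the counterterm pathwise. Writing $\e^4c^\e_t\,\mrd_\e^*\mrd_\e\tilde\Psi^\e=(\e^3c^\e_t)\,\e\,\mrd_\e^*\mrd_\e\tilde\Psi^\e$, bounding it by $\e^{\kappa}\sup_{t}\|\tilde\Psi^\e(t)\|_{\CC^{-1/2-\kappa}_\e}$ and using \eqref{eq:SHE_moment} is a small streamlining that removes the need for the paper's time-increment analysis of the first chaos via \eqref{eq:ct-increment}.
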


\begin{proof} 
The first bound is rather classical, and we focus on the second bound.

By Lemma~\ref{lem:eps-improve-reg} and the convention on norms of
differential forms from Remark~\ref{rem:scalar-vector-convention},
\begin{equation}\label{eq:X_bound}
\norm{  \e^{4}\mrd_\e^* (\mrd_\e\tilde\Psi^\e)^3 (t)}_{\CC^{-3/2-2\kappa}_\e} 
\lesssim 
 \norm{  \e^{3+\kappa} (\mrd_\e\tilde\Psi^\e)^3 (t)}_{\CC^{-3/2-\kappa}_\e}. 
\end{equation}
As in \eqref{e:def-X}, we write $ X_\e = \mrd_\e\tilde\Psi^\e$ for brevity.

We first show that for  $t,s\in [0,T], z\in \T^3_\e$, $\lambda\in [\e,1]$ and any
rescaled test function $\varphi^\lambda_z$  as in \eqref{eq:f_e_f_alpha_neg}, one has uniformly in $\e>0$
\begin{align}
  \label{eq:3momentbd1}
\E \big|  \langle \e^{3+\kappa}X_\e^3(t),\; \phi^\lambda_z\rangle_\e \big|^2
& \lesssim \e^{2\kappa}\lambda^{-3-2\kappa},
 \\
  \label{eq:3momentbd2}
\E \big| \langle \e^{3+\kappa} (X_\e^3 (t)-X_\e^3(s)),\; \phi^\lambda_z\rangle_\e \big|^2
&\lesssim \lambda^{-3-2\kappa} |t-s|^{\kappa}.
\end{align}
Wiener decomposition gives
\[
X_\e(t,x)^3 \; =\; \Wick{X_\e(t,x)^3} + 3c_t^\e X_\e(t,x).
\]
By Wick's formula 
\[
\E [\Wick{X_\e(t,x)^3} \Wick{X_\e(t,y)^3}] = 6 C^\e_{t,t}(x-y)^3.
\]
By \eqref{e:C-bound} of Lemma~\ref{lem:kernel-C},  this kernel has singularity of order $-9$,
thus
\begin{equs}[e:3rd-chaos-mom]
\E \big| \langle \e^{3+\kappa}\Wick{X_\e^3}(t),\; \phi^\lambda_z\rangle_\e  \big|^2
&\lesssim 
 \e^{6+2\kappa} 
\int_{\T_\e^3}\int_{\T_\e^3}
 (|x-y|+\e)^{-9} |\phi^\lambda_z(x)\phi^\lambda_z(y) | \,dx\,dy
\\
&\lesssim 
 \e^{6+2\kappa} \e^{-6}\lambda^{-3}
 \leq 
 \e^{2\kappa}\lambda^{-3}.
\end{equs}
Here, the second inequality follows from the fact that the support of $\phi^\lambda$ has diameter $\lambda$
and the $L^\infty$ norm of $\phi^\lambda$ is bounded by $\lambda^{-3}$.

For the first chaos, 
\[
\E [X_\e(t,x) X_\e(t,y)] = C^\e_{t,t}(x-y).
\]
By \eqref{e:C-bound} of Lemma~\ref{lem:kernel-C},  this kernel has singularity of order $-3$.
By \eqref{eq:cst_dPsisquare} in Lemma~\ref{lm:renom_cst}, $c^\e_t \lesssim \e^{-3}$. 
Now, a subtlety is that in \eqref{e:3rd-chaos-mom}, 
although integrating the kernel of degree $-9$ yields
a divergent factor $\e^{-6}$, this divergence is absorbed by the outside factor $\e^{6+2\kappa}$.
However, for the first chaos, the similar calculation as in \eqref{e:3rd-chaos-mom} yields
 $$\e^{6+2\kappa} (c^\e_t )^2 \lesssim \e^{2\kappa},$$
which already accounts for the $\e^{2\kappa}$ 
required on the right-hand side of \eqref{eq:3momentbd1}. Hence no additional positive power of $\e$ remains to absorb  the logarithmic divergence arising from integration of the degree $-3$ kernel. We instead, exploit the fact that $X_\e$ is a derivative: 
  \begin{equation}\label{e:X-back-to-Psi}
    \begin{aligned}
\e^{2\kappa}
 \E \big| \langle  
 X_\e(t),\; \phi^\lambda_z  \rangle_\e 
\big|^2 
&\leq 
 \e^{2\kappa}\lambda^{-3-2\kappa}\E \norm{ X_\e(t)}_{\CC^{-3/2-\kappa}_\e}^2 \\
& \overset{\text{\ref{pt:w_D}}}{\lesssim} \e^{2\kappa} \lambda^{-3-2\kappa}\E \| \tilde{\Psi}(t)\|_{\CC^{-1/2-\kappa}_\e}^2  \overset{\eqref{eq:SHE_moment}}{\lesssim} \e^{2\kappa}\lambda^{-3-2\kappa}.
\end{aligned}
 \end{equation}
Together, we obtain \eqref{eq:3momentbd1}.

We turn to proving \eqref{eq:3momentbd2}. For the third chaos terms, following Wick's formula,
\begin{equ}
   \E[(\Wick{ X_\e^3}(t,x)\,\Wick{X_\e^3}(s,y))]
   = 6 C^\e_{t,s}(x-y)^3.
\end{equ}
Thus,
\begin{equs}
  \label{bd:thirdChaosTimeD}
\E \big| \langle 
\e^{3+\kappa} &
(\Wick{X_\e^3}(t)
 -
\Wick{X_\e^3}(s)), 
\; \phi^\lambda_z\rangle_\e \big|^2 
\\
& \lesssim
 \e^{6+2\kappa}  \int_{\T^3_\e}\int_{\T^3_\e}
 |J_3(x,y,t,s)|\,
 |\phi^\lambda_z(x)\,\phi^\lambda_z(y)|\, dx dy, 
\end{equs}
where
\[
J_3(x,y,t,s) 
 = C^\e_{t,t}(x,y)^3 -   C^\e_{t,s}(x,y)^3 -   C^\e_{s,t}(x,y)^3 + C^\e_{s,s}(x,y)^3 .
 \]
By \eqref{e:C-bound}  and  \eqref{e:C-diff-bound}  of Lemma~\ref{lem:kernel-C}, 
\begin{equs}
  |J_3(x,y,t,s)|
  &\lesssim
   |C_{t,t}(x,y)-C_{t,s}(x,y)| 
   \Big( C_{t,t}(x,y)^2 + C_{t,s}(x,y)^2  \Big) 
   \\
  & + |C_{s,s}(x,y)-C_{s,t}(x,y)|  \Big(C_{s,t}(x,y)^2 + C_{s,s}(x,y)^2\Big)
  \\
  &\lesssim
  \frac{|t-s|^{\kappa} }{(|x-y|+\e)^{9+2\kappa}}.
\end{equs}
Since $\e^{6+2\kappa} (|x-y|+\e)^{-9-2\kappa} \le \e^{2\kappa} (|x-y|+\e)^{-3-2\kappa}$, 
\begin{equs}
\eqref{bd:thirdChaosTimeD}
&\lesssim
\e^{2\kappa}|t-s|^{\kappa}
  \int_{\T^3_\e}\int_{\T^3_\e}
(|x-y|+\e)^{-3-2\kappa}
|\phi^\lambda_z(x)\,\phi^\lambda_z(y)|\, dx dy
\\
&\lesssim
\e^{2\kappa}|t-s|^{\kappa}
\e^{-2\kappa}
\lambda^{-3}
=
|t-s|^{\kappa} \lambda^{-3}\,.
\end{equs}
Here, the second inequality follows from the fact that the support of  $\phi^\lambda$ is of diameter $\lambda$
and $\phi^\lambda$ is bounded by $\lambda^{-3}$ in $L^\infty$.

For the first chaos terms,  note that 
\[
c^\e_t X_\e(t) - c^\e_s X_\e(s) 
=
c^\e_t (X_\e(t) - X_\e(s))
+
(c^\e_t - c^\e_s ) X_\e(s),
\]
thus, 
\begin{equs}
\E \Big| \Big\langle \e^{3+\kappa}  
\Big(c^\e_t X_\e(t) &- c^\e_s X_\e(s) \Big),\; \phi^\lambda_z  
\Big\rangle_\e 
\Big|^2   
\lesssim
\e^{6+2\kappa} 
    (c_t^\e)^2
    \E
    \left|
        \left\langle
            X_\e(t)-X_\e(s),
            \varphi_z^\lambda
        \right\rangle_\e
    \right|^2
 \\
&+\e^{6+2\kappa} |c_t^\e-c_s^\e|^2
    \mathbf E
    \left|
        \left\langle
            X_\e(s),
            \varphi_z^\lambda
        \right\rangle_\e
    \right|^2
= I_1 + I_2.
\end{equs}
By \eqref{eq:cst_dPsisquare} in Lemma~\ref{lm:renom_cst},
\begin{equ}
  \label{bd:firstChaosTimeD}
I_1\lesssim
\e^{2\kappa} \int_{\T^3_\e}\int_{\T^3_\e} |J_1(x,y,t,s)\,\phi^\lambda_z(x)\,\phi^\lambda_z(y)|\, dx dy,
\end{equ}
where
\begin{equ}
    J_1(x,y,t,s)
    = C^\e_{t,t}(x,y) - C^\e_{t,s}(x,y) - C^\e_{s,t}(x,y) + C^\e_{s,s}(x,y).
\end{equ}
Following \eqref{e:C-diff-bound}  of Lemma~\ref{lem:kernel-C}, 
\[
| J_1(x,y,t,s) |
 \lesssim
    |t-s|^{\kappa} ( |x-y| +\e)^{-3-2\kappa}.
\]
Therefore, 
\[
I_1 \lesssim 
|t-s|^{\kappa}  \lambda^{-3}.
\]
By \eqref{eq:ct-increment} in Lemma~\ref{lm:renom_cst} with $\rho=\kappa/2$,
and \eqref{e:X-back-to-Psi},
\[
I_2 \lesssim
|t-s|^{\kappa} \lambda^{-3-2\kappa}.
\]
Combining these bounds, we obtain  \eqref{eq:3momentbd2}.

Interpolating \eqref{eq:3momentbd1} and \eqref{eq:3momentbd2} and using hypercontractivity (i.e., equivalence of  moments for variables in finite chaos), we obtain for any $p\geq 1$, uniformly in 
$\tilde\kappa \in [0,\kappa]$,
\begin{equ}
  \label{eq:3momentbd3}
\E \big| \langle \e^{3+\kappa} (X_\e^3 (t)-X_\e^3(s)),\; \phi^\lambda_z\rangle_\e \big|^p
\lesssim (\e^{\tilde\kappa} |t-s|^{(\kappa-\tilde\kappa)/2}\lambda^{-\frac{3}{2}-\kappa})^p
\;.
\end{equ}
By wavelet characterization of $\CC_\e^{-3/2-\kappa}$ 
(\cite[Proposition~3.20, Theorem~10.7]{Hairer14} and \cite[Proposition~4.9]{HM18})
and a Kolmogorov type argument, the bound \eqref{eq:3momentbd3} implies that for any $\hat\kappa>\kappa$,
\begin{equation*}
\E \|\e^{3+\kappa} X^3_\e(t)-\e^{3+\kappa} X^3_\e(s)\|^p_{\CC^{-3/2-\hat\kappa}_\e}
\lesssim \e^{p\tilde\kappa} |t-s|^{p(\kappa-\tilde\kappa)/2}
\end{equation*}
The classical Kolmogorov continuity criterion  
(in the form of \cite[Theorem A.10]{Friz_Victoir_2010}) now
implies that $\E \sup_{t\in [0,T]} \norm{  \e^{3+\hat\kappa} X_\e^3 (t)}^p_{\CC^{-3/2-\hat\kappa}_\e}
\lesssim \e^{p(\tilde\kappa + \hat\kappa-\kappa)}\lesssim \e^{p\tilde\kappa}$ for any $\tilde\kappa<\kappa<\hat\kappa$.
Since there are only three components of a $2$-form, the same estimate holds with $X^3_\e$ replaced by $(\mrd_\e\tilde\Psi)^3$.
Combined with \eqref{eq:X_bound}, applied with $\hat\kappa$ in place of $\kappa$,
we obtain \eqref{eq:third_mom} after relabelling $\hat\kappa$ as $\kappa$.
\end{proof}

For every $r>0$, $\e>0$, we define 
\begin{equ}
  \label{eq:GoodEvent}
\Omega_{r,\eps}=\Omega_{r,\eps}(T) \eqdef
 \{ \omega \in \Omega \; |\; \text{Assumption \ref{as:models_abstract} holds with } r \}
\end{equ}
Following Lemma \ref{lm:ProbEstimate} with $\tilde\kappa =\frac{\kappa}{2}$  and Markov's inequality, 
there exists a constant $C$ independent of $\eps$ such that  
\begin{align}
   \P(\Omega_{r,\eps}^c) 
   & \leq  
    \P( 
\sup_{t\in [0,T]}\|\tilde \Psi^\e(t)\|_{\CC^{-1/2-\kappa}_\e} \geq r) \nonumber \\ 
&\qquad 
+ \P(
  \sup_{t\in [0,T]} \norm{  \e^{4}\mrd_\e^* (\mrd_\e\tilde\Psi^\e )^3 (t)}_{\CC^{-3/2-2\kappa}_\e}
 \geq  r \e^{\frac{\kappa}{2} }) \leq \frac{C_{p,T}}{r^p}
\end{align}
for every $p\ge 2$.
Namely,
\begin{equ}
  \label{eq:prob_Omega}
    \P(\Omega_{r,\eps}) \geq 1-\frac{C_{p,T}}{r^p}
\end{equ}
uniformly in $\eps>0$.

\subsection{A priori bound on the remainder}

Define $\widehat\tau_\e$ similarly to \eqref{e:def-tau}: 
\begin{equ}[e:def-tauh]
    \widehat\tau_\eps
    \eqdef
    \inf\left\{
        t\ge0:
        \|\eps \widehat\theta^\e(t)\|_{L^\infty} \ge \frac{\rho}{4}
    \right\}.
\end{equ}

The goal of this subsection is to show that on $\Omega_{r,\e}$,
the remainder $\widehat v^\e$ is bounded in $\CC_\e^{4\kappa}$ by $\e^{\kappa/2}$ 
on $[0,T]$, and for small enough $\eps$, we have $\widehat\tau_\e>T$,
namely, the Lie algebra valued process does not exit the $\rho$-neighborhood before $T$.

\begin{proposition}
\label{prop:remainder}
Fix $T>0$ and $M,r\ge1$.
Suppose that, almost surely,
\begin{equ}
  \|\theta^\e(0)\|_{\mathcal C^\eta_\e}
  \le M,
 \qquad
  -\frac23+\frac53\kappa < \eta.
\end{equ}
There exist constants $C_{M,T}<\infty$ and
$\e_0=\e_0(M,T, r)>0$ such that for every
$\e\le\e_0$, on the event $\Omega_{r,\e}(T)$ from \eqref{eq:GoodEvent} we have
\begin{equ}[e:no-exit]
  \widehat\tau_\e>T
\end{equ}
and
\begin{equ}[eq:v-apriori-final]
  \|\widehat v^\e\|_
  {\mathcal C^{4\kappa,T}_{0,\e}}
  \le
  C_{M,T}\e^{\kappa/2}r^5.
\end{equ}
Consequently,
$\widehat \theta^\e$ agrees with the original
Langevin process $\theta^\e$ on $[0,T]$ and $\tau_\e>T$ on $\Omega_{r,\e}(T)$.
\end{proposition}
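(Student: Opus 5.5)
The plan is a continuity (bootstrap) argument for the mild formulation of \eqref{eq:v_i}. Set $\alpha=4\kappa$ and a target radius $R_v\eqdef 1$. Define the stopping time
\[
\sigma_\e\eqdef\inf\bigl\{t\le T:\ \|\widehat v^\e\|_{\CC^{4\kappa,t}_{0,\e}}\ge 1\bigr\}\wedge T.
\]
For fixed $\e>0$, $\widehat v^\e$ solves a finite-dimensional ODE with continuous paths and $\widehat v^\e(0)=0$, so $\sigma_\e>0$ and the map $t\mapsto\|\widehat v^\e\|_{\CC^{4\kappa,t}_{0,\e}}$ is continuous. On $[0,\sigma_\e]$, Assumption~\ref{asmpt:v} holds with $R_v=1$ and $\alpha=4\kappa\in(3\kappa,1]$. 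On $\Omega_{r,\e}$, Assumption~\ref{as:models_abstract} also holds. The aim is to show that on $[0,\sigma_\e]$ the improved bound \eqref{eq:v-apriori-final} holds; once $\e\le\e_0$ makes $C_{M,T}\e^{\kappa/2}r^5<1$, this forces $\sigma_\e=T$.

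The main step is the Schauder estimate. Write
\[
\widehat v^\e(t)=\int_0^t P^\e_{t-s}\bigl(a_3\,\e^4\mrd_\e^*(\mrd_\e\widehat\theta^\e)^3+\e^{-2}\mrd_\e^*\widehat{\mathfrak R}(\e^2\mrd_\e\widehat\theta^\e)\bigr)(s)\,ds,
\]
and expand the cubic term via \eqref{eq:cubic_expansion}. The terms fall into three groups:
\begin{itemize}
\item For $\mathfrak C^{3,0}$, use \eqref{e:assump-Psi-2}, which gives a bound $r\e^{\kappa/2}$ in $\CC^{-3/2-2\kappa}_\e$. The discrete heat semigroup gains regularity $2-\delta$ at cost $(t-s)^{-(4\kappa+3/2+2\kappa)/2}$, which is integrable since that exponent is below $1$ for small $\kappa$. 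This yields a contribution $\lesssim r\e^{\kappa/2}$ in $\CC^{4\kappa}_\e$.
\item For the other $\mathfrak C^{p,q}$, Lemma~\ref{lm:cubic_bd} gives a $\CC^{-1}_\e$ bound $\e^\kappa r^pM^q$ times a time weight. Smoothing from $\CC^{-1}$ to $\CC^{4\kappa}$ costs $(t-s)^{-(1+4\kappa)/2}$.
\item For the remainder, Lemma~\ref{lm:fifth_bd} likewise gives a bound $\e^\kappa(1+r)^5$ times a weight.
\end{itemize}

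The obstacle I expect is checking that the time integrals $\int_0^t (t-s)^{-(1+4\kappa)/2}s^{\gamma'}ds$ converge, where $\gamma'$ are the blow-up exponents from Lemmas~\ref{lm:cubic_bd} and \ref{lm:fifth_bd}. This needs $\gamma'>-1$.
\begin{itemize}
\item The worst cubic exponent is $(1+3\eta-\kappa)/2$. The condition $>-1$ is $\eta>-1+\kappa/3$, which holds.
\item For the quintic exponent $(5\eta+3-\kappa)/2$, the condition $>-1$ is $\eta>-1+\kappa/5$, also fine.
\item The factor $(1+4\kappa)/2<1$ for small $\kappa$.
\end{itemize}
So the integrals are finite, bounded by $C_T$. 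I suspect the stated threshold $\eta>-2/3+5\kappa/3$ is precisely what makes these exponents work once $\alpha=4\kappa$ is inserted into Lemma~\ref{lm:cubic_bd}. If my count differs, I will recheck the exponent $q(\eta+\nu)/2$ with $q=3$, $p=0$. Summing the pieces, and using $r\ge1$ and $\e^\kappa\le\e^{\kappa/2}$, gives $\|\widehat v^\e\|_{\CC^{4\kappa,\sigma_\e}_{0,\e}}\le C_{M,T}\e^{\kappa/2}r^5$, which closes the bootstrap and proves \eqref{eq:v-apriori-final}.

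Finally, for \eqref{e:no-exit}, bound $\e\|\widehat\theta^\e(t)\|_{L^\infty}$ by splitting $\widehat\theta^\e=\tilde\Psi^\e+P^\e_t\theta^\e(0)+\widehat v^\e$.
\begin{itemize}
\item By Lemma~\ref{lem:eps-improve-reg}\ref{pt:wo_D}, $\e\|\tilde\Psi^\e\|_{L^\infty}\lesssim\e^{1/2-\kappa}r$.
\item Using the same lemma and the contraction of $P^\e_t$, $\e\|P^\e_t\theta^\e(0)\|_{L^\infty}\lesssim\e^{1+\eta}M$, with $1+\eta>1/3$.
\item The remainder satisfies $\e\|\widehat v^\e\|_{L^\infty}\lesssim\e$.
\end{itemize}
Shrinking $\e_0$ makes the total below $\rho/4$ uniformly on $[0,T]$. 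Then $H=h'$ along the trajectory, so $\widehat\theta^\e$ solves \eqref{eq:NParaDisc2}. By uniqueness of the SDE solution up to $\tau_\e$, $\widehat\theta^\e=\theta^\e$ on $[0,\tau_\e\wedge T]$, and the bound then gives $\tau_\e>T$.
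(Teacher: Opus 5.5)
Your architecture is the same as the paper's: the stopping time $\sigma_\e$ at which $\|\widehat v^\e\|_{\mathcal C^{4\kappa,t}_{0,\e}}$ reaches $1$; the Duhamel split into $\mathfrak C^{3,0}$, the mixed $\mathfrak C^{p,q}$ and the quintic remainder; Schauder from $\mathcal C^{-3/2-2\kappa}_\e$ resp.\ $\mathcal C^{-1}_\e$ into $\mathcal C^{4\kappa}_\e$; then the $L^\infty$ bound and uniqueness. However, the step you identified as the main obstacle is checked with the wrong criterion, so as written the bootstrap does not close.

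The condition $\gamma'>-1$ only makes $\int_0^t (t-s)^{-(1+4\kappa)/2}s^{\gamma'}\,ds$ finite for each fixed $t$. For $t\le 1$ the integral equals $B\bigl(\tfrac{1-4\kappa}{2},1+\gamma'\bigr)\,t^{\frac{1-4\kappa}{2}+\gamma'}$. The target norm $\mathcal C^{4\kappa,T}_{0,\e}$ has weight exponent $0$: it is a plain supremum over $t\in(0,T]$ and allows no blow-up at $t=0$. You need the integral bounded by $C_T$ uniformly in $t$ to show $\|\widehat v^\e(t)\|_{\mathcal C^{4\kappa}_\e}$ stays below $1$ near $t=0$, and hence $\sigma_\e=T$. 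That requires $\gamma'>-\frac{1-4\kappa}{2}$, which is the paper's \eqref{e:above-1}. The conditions you verified, $\eta>-1+\kappa/3$ and $\eta>-1+\kappa/5$, do not imply this. For $\eta$ between $-1+\kappa/3$ and $-\frac23+\frac53\kappa$ your bound grows like a negative power of $t$, and the continuity argument fails.

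Here is the check done correctly, with $\gamma'>-\frac{1-4\kappa}{2}$ for each branch:
\begin{itemize}
\item $\gamma'=\frac{1+3\eta-\kappa}{2}$ requires $\eta>-\frac23+\frac53\kappa$, which is exactly the hypothesis of the proposition.
\item $\frac{\eta-3\kappa}{2}$ requires $\eta>-1+7\kappa$.
\item $\frac{5\eta+3-\kappa}{2}$ requires $\eta>-\frac45+\kappa$.
\item $\frac{1+\eta-5\kappa}{2}$, the branch of Lemma~\ref{lm:fifth_bd} you omitted, requires $\eta>-2+9\kappa$.
\end{itemize}
All of these follow from the hypothesis for small $\kappa$.

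Your guess about where the threshold comes from is also off. The critical exponents come from $(p,q)=(0,3)$ and $(2,1)$, where $3-p-q=0$, so $\alpha=4\kappa$ does not enter them at all. The threshold is simply the requirement that the Duhamel integral stay bounded as $t\downarrow0$.

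With this correction the rest of your argument goes through as in the paper. That includes the $\mathfrak C^{3,0}$ term, where there is no time weight and $\frac34+3\kappa<1$ suffices, and the no-exit and uniqueness step.
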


\begin{proof}
For $t\in[0,T]$, write
\[
    \|\widehat v^\e\|_{4\kappa,t}
    \eqdef
    \bigl\|
        \widehat v^\e
    \bigr\|_{\mathcal C^{4\kappa,t}_{0,\e}}
    =
    \sup_{s\in[0,t]}
    \|\widehat v^\e(s)\|_{\mathcal C^{4\kappa}_\e}.
\]
Define the stopping time
\[
    \sigma_\e
    \eqdef
    T\wedge
    \inf\left\{
        t\in[0,T]:
        \|\widehat v^\e\|_{4\kappa,t}\geq1
    \right\},
\]
with the convention that the infimum of the empty set is $+\infty$.
Since $\widehat v^\e(0)=0$
 and 
$\widehat v^\e$ is continuous in time for fixed $\e$, one has
$\sigma_\e>0$.

For any  $ t\in [0,\sigma_\e]$,
\[
    \|\widehat v^\e\|_{4\kappa,t}
    \leq1,
\]
assumption~\ref{asmpt:v} is thus satisfied on the interval  $[0,\sigma_\e]$ with $\alpha = 4\kappa$ and $R_v = 1 \leq r$.

We write the equation \eqref{eq:v_i} in the mild form: 
\begin{equs} [eq:v-mild-three-terms]
    \widehat v^\e(t)
    &=
    \int_0^t
    P_{t-s}^\e
    \Bigl[
        a_{3} \e^4 \mrd_\e^*
        \bigl(
            \mrd_\e\widetilde\Psi^\e
        \bigr)^3(s)
    \Bigr]\,ds
    \\
    &\quad+
    \sum_{(p,q)\neq (3,0)} a_3   \binom{3}{p} \binom{3-p}{q}
     \int_0^t
    P_{t-s}^\e
    \mathfrak C^{p,q}(s)\,ds
    +
    \int_0^t
    P_{t-s}^\e
    \mathcal R^\e(s)\,ds
\end{equs}
where $\mathfrak C^{p,q}$ is defined in \eqref{eq:Cpq} for $p,q\ge 0$ with $p+q\leq 3$, and
\[
\mathcal R^\e(s)
=
\e^{-2}\mrd_\e^*
\widehat{\mathfrak R}\big(\e^2\mrd_\e\widehat\theta^\e(s)\big).
\]

On $\Omega_{r,\e}(T)$, using the uniform bound on $a_3$ in \eqref{e:coeff-bounds}, the first term satisfies \eqref{e:assump-Psi-2}, namely
\begin{equ}[eq:pure-cubic-bound-proof]
    \left\|
        a_{3}\e^4 \mrd_\e^*
        \bigl(
            \mrd_\e\widetilde\Psi^\e
        \bigr)^3(s)
    \right\|_{\mathcal C^{-3/2-2\kappa}_\e}
    \leq C_M
    r\e^{\kappa/2}.
\end{equ}
By Lemma~\ref{lm:cubic_bd},
\begin{equ}
\label{eq:mixed-bound-proof}
   \sum_{(p,q)\neq (3,0)}   \|\mathfrak C^{p,q}(s)\|_{\CC^{-1}_\e}
    \leq
    C_M\e^\kappa r^3
    (1\wedge s)^{b_3},
\end{equ}
where
\[
    b_3
    \eqdef
    \frac{1+3\eta-\kappa}{2}
    \wedge
    \frac{\eta-3\kappa}{2}\wedge 0.
\]
Similarly, by Lemma~\ref{lm:fifth_bd},
\begin{equation}
\label{eq:remainder-bound-proof}
    \|\mathcal R^\e(s)\|_{\mathcal C^{-1}_\e}
    \leq
    C_M\e^\kappa r^5
    (1\wedge s)^{b_5},
\end{equation}
where
\[
    b_5
    \eqdef
    \frac{1+\eta-5\kappa}{2}
    \wedge
    \frac{5\eta+3-\kappa}{2}
    \wedge
    0.
\]
The constant $C_M$ does not depend on
$\e$, $r$, $s$, or $\sigma_\e$.

The discrete Schauder estimate gives, uniformly in $\e$,
\begin{equ}
\label{eq:schauder-proof}
    \|P_s^\e f\|_{\CC^\alpha_\e}
    \lesssim
    (s\wedge1)^{-(\alpha-\beta)/2}
    \|f\|_{\CC^\beta_\e},
    \qquad s>0.
\end{equ}
Applying \eqref{eq:schauder-proof} with $\alpha=4\kappa$, then 
$\beta=-3/2-2\kappa$ to the first term in
\eqref{eq:v-mild-three-terms}, and $\beta=-1$ to the other two, we have for $t\leq\sigma_\e$,
\begin{equs}[eq:v-integrals]
    \|\widehat v^\e(t)\|_{\CC^{4\kappa}_\e}
    &\leq C_M
    \e^{\kappa/2}r
    \int_0^t
    ((t-s)\wedge1)^{-\frac34-3\kappa}
    \,ds
    \notag\\
    &\quad+
    C_M\e^\kappa r^3
    \int_0^t
    ((t-s)\wedge1)^{-\frac{1+4\kappa}{2}}
    (1\wedge s)^{b_3}\,ds
    \notag\\
    &\quad+
    C_M\e^\kappa r^5
    \int_0^t
    ((t-s)\wedge1)^{-\frac{1+4\kappa}{2}}
    (1\wedge s)^{b_5}\,ds.
\end{equs}
We now bound these three integrals, uniformly for
$t\in[0,T]$. Since $\kappa>0$ is sufficiently small, the first integral is bounded by a constant that depends only on $T$.
For the second and the third integrals, one can check using $\eta > -\frac23+\frac53\kappa $ that
\begin{equ}[e:above-1]
 -\frac{1+4\kappa}{2}+b_3 > -1 ,\qquad  - \frac{1+4\kappa}{2} +b_5 > -1.
\end{equ}
Recall,for instance using the Beta function, that whenever  $a<1$ and $b> -1$,
\[
    \int_0^t
    (t-s)^{-a}s^b\,ds
    \lesssim 
    C_{a,b} t^{1-a+b} \mathbf 1_{t\le 1}+ C_T \mathbf 1_{t\in [1,T]}.
\]
Substituting this bound in \eqref{eq:v-integrals}  under the condition \eqref{e:above-1}
yields positive powers of $t$. Thus,  \eqref{eq:v-integrals} implies
\begin{equ}[eq:v-bootstrap-bound]
    \|\widehat v^\e\|_
    {4\kappa,\sigma_\e}
    \leq
    C_{M,T}
    \Big(
        \e^{\kappa/2}r
        +
        \e^\kappa r^3
        +
        \e^\kappa r^5
    \Big)
    \leq
    C_{M,T}\e^{\kappa/2}r^5,
\end{equ}
where in the last inequality we used $r\geq1$ and
$\e\leq1$.

Choose $\e_0=\e_0(M,T, r)$ sufficiently small such that
\begin{equ}[e:choose-e0]
    C_{M,T}\e^{\kappa/2}r^5<1/2\;, \qquad \forall  \e\leq\e_0 \;.
\end{equ}
If $\sigma_\e<T$, then continuity and the definition of
$\sigma_\e$ would imply that
$\|\widehat v^\e\|_{4\kappa,\sigma_\e}=1$;
but this contradicts \eqref{eq:v-bootstrap-bound} and \eqref{e:choose-e0}. Hence
$
    \sigma_\e=T
$
and \eqref{eq:v-apriori-final} follows.

It remains to prove \eqref{e:no-exit}, i.e.,  $\widehat\theta^\e$ does not exit the
 $\rho$-neighborhood. Recall that
\[
    \widehat\theta^\e(t)
    =
    \widetilde\Psi^\e(t)
    +
    P_t^\e\theta^\e(0)
    +
    \widehat v^\e(t).
\]
By Lemma \ref{lem:eps-improve-reg}\ref{pt:wo_D} and \eqref{eq:v-apriori-final}, on $\Omega_{r,\e}(T)$,
\begin{equs}
 & {}  \sup_{t\in[0,T]}
    \e
    \|\widetilde\Psi^\e(t)\|_{L^\infty_\e}
     \lesssim
    \e^{1/2-\kappa}
    \sup_{t\in[0,T]}
    \|\widetilde\Psi^\e(t)\|_
    {\CC^{-1/2-\kappa}_\e}
  \lesssim
    \e^{1/2-\kappa}r,
  \\
 &   \sup_{t\in[0,T]}
    \e
    \|P_t^\e\theta^\e(0)\|_
    {L^\infty_\e}
    \lesssim
    \e^{1+\eta}
    \sup_{t\in[0,T]}
    \|P_t^\e\theta^\e(0)\|_
    {\CC^\eta_\e}
   \lesssim
    \e^{1+\eta}M,
\\
   & \sup_{t\in[0,T]}
    \e
    \|\widehat v^\e(t)\|_{L^\infty_\e}
    \lesssim
    \e
    \|\widehat v^\e\|_{4\kappa,T}
   \leq
    C_{M,T}
    \e^{1+\kappa/2}r^5.
\end{equs}
Combining these bounds, we find
\begin{equ}
\label{eq:theta-Linfty-final}
    \sup_{t\in[0,T]}
    \e
    \|\widehat\theta^\e(t)\|_{L^\infty_\e}
    \leq
    C\e^{1/2-\kappa}r
    +
    C\e^{1+\eta}M
    +
    C_{M,T}\e^{1+\kappa/2}r^5.
\end{equ}
Since $\eta>-1$, the right-hand side tends to zero as
$\e\to0$, with $M$, $T$, and $r$ fixed. Decreasing
$\e_0$ if necessary, we may therefore assume that
\[
    \sup_{t\in[0,T]}
    \e
    \|\widehat\theta^\e(t)\|_{L^\infty_\e}
    <
    \frac{\rho}{4}.
\]
By the definition of the exit time $\widehat\tau_\e$, this proves   \eqref{e:no-exit}, i.e. $ \widehat\tau_\e>T$.

We conclude that $\widehat\theta^\e$ and the original $\theta^\e$ agree on $[0,T]$ and $\tau_\e>T$.
Indeed, on $[0,\tau_\e \wedge \widehat\tau_\e]$,  $\theta^\e$ and $\widehat\theta^\e$ satisfy exactly the same equation
with the same initial condition and noise, so by uniqueness they are equal;
since $\widehat\tau_\e>T$, $\theta^\e$ cannot exit the same $\rho$-neighborhood before $T$.
Thus, $\tau_\e>T$ and $\widehat\theta^\e=\theta^\e$ on $[0,T]$.
\end{proof}

\begin{corollary}
\label{cor:remainder-and-exit}
Assume  that  $\sup_{\e\in(0,1]}  \|\theta^\e(0)\|_{\mathcal C^\eta_\e}\le M$ almost surely for some deterministic $M<\infty$.
Then, for every fixed  $T>0$ and every $\delta>0$,
\[
    \lim_{\e\to0}
    \mathbf P\left(
        \tau_\e\leq T
        \ \text{or}\
        \|\widehat v^\e\|_
        {\mathcal C^{4\kappa,T}_{0,\e}}
        >\delta
    \right)
    =
    0.
\]
In particular, as $\e\to 0$,
\[
    \mathbf P(\tau_\e\leq T) \to 0
\]
and
\[
    \|\widehat v^\e\|_{\CC^{4\kappa,T}_{0,\e}}
    \to 0
    \quad\text{in probability}.
\]
\end{corollary}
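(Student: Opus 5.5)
The corollary follows by combining Proposition~\ref{prop:remainder} with the lower bound \eqref{eq:prob_Omega} on the probability of the good event. Fix $T>0$, $\delta>0$, and an arbitrary target $\zeta>0$. The idea is to first choose $r$ large, depending only on $\zeta$, so that the good event has probability close to one uniformly in $\e$. Then we take $\e$ small, depending on $r$, so that the deterministic conclusions of Proposition~\ref{prop:remainder} hold on that event.

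\emph{Choice of $r$.} Using \eqref{eq:prob_Omega} with, say, $p=2$, choose $r\ge1$ so large that $C_{2,T}r^{-2}<\zeta$. Since the constant $C_{p,T}$ from Lemma~\ref{lm:ProbEstimate} does not depend on $\e$, this gives
\[
\P(\Omega_{r,\e}(T)^c)<\zeta
\]
for every $\e\in(0,1]$. One small point: the proposition assumes $M\ge1$, which we may arrange by enlarging $M$. It also assumes $-\frac23+\frac53\kappa<\eta$, which is in force from the setting of Theorem~\ref{thm:main} and Assumption~\ref{asmpt:v}.

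\emph{Choice of $\e$.} With $r$ now fixed, Proposition~\ref{prop:remainder} provides $\e_0=\e_0(M,T,r)$ such that, for every $\e\le\e_0$, on $\Omega_{r,\e}(T)$ we have both $\tau_\e>T$ and
\[
\|\widehat v^\e\|_{\CC^{4\kappa,T}_{0,\e}}\le C_{M,T}\,\e^{\kappa/2}r^5 .
\]
Shrinking $\e_0$ further if needed, we can ensure $C_{M,T}\e_0^{\kappa/2}r^5\le\delta$. Then for all $\e\le\e_0$ the event $\{\tau_\e\le T\}\cup\{\|\widehat v^\e\|>\delta\}$ is contained in $\Omega_{r,\e}(T)^c$, so its probability is less than $\zeta$. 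Because $\zeta$ was arbitrary, the limit is $0$.

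The two ``in particular'' statements follow by monotonicity of probability, since each event is contained in the union above. The second of them is exactly convergence in probability for every $\delta>0$.

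There is no genuine obstacle here. The only step needing care is the order of the choices: $r$ must be fixed before $\e_0$, because $\e_0$ depends on $r$. This is legitimate precisely because the bound \eqref{eq:prob_Omega} is uniform in $\e$.
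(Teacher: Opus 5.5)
Your proposal is correct and follows essentially the same argument as the paper. Both fix $r$, invoke Proposition~\ref{prop:remainder} on $\Omega_{r,\e}(T)$ for $\e$ small depending on $r$, bound the bad event by $\mathbf P(\Omega_{r,\e}(T)^c)\le C_{p,T}r^{-p}$ uniformly in $\e$, and then send $r\to\infty$; the only difference is cosmetic, in that you use an explicit tolerance $\zeta$ where the paper takes a $\limsup$ in $\e$ first.
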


\begin{proof}
Fix an arbitrary $r\geq1$. By
Proposition~\ref{prop:remainder}, on
$\Omega_{r,\e}(T)$ and for all 
$\e<\e_0(M,T,r)$,
\[
    \tau_\e>T
\qquad
\mbox{and}
\qquad
    \|\widehat v^\e\|_
    {\mathcal C^{4\kappa,T}_{0,\e}}
    \leq
    C_{M,T}\e^{\kappa/2}r^5.
\]
For fixed $r$, the right-hand side is smaller than $\delta$ for all
sufficiently small $\e$. Therefore,
\[
\begin{aligned}
    &\limsup_{\e\to0}
    \mathbf P\left(
        \tau_\e\leq T
        \ \text{or}\
        \|\widehat v^\e\|_
        {\mathcal C^{4\kappa,T}_{0,\e}}
        >\delta
    \right)
    \\
    &\qquad\leq
    \limsup_{\e\to0}
    \mathbf P\bigl(
        \Omega_{r,\e}(T)^c
    \bigr)
    \leq
    C_{p,T}r^{-p}.
\end{aligned}
\]
Since $r$ is arbitrary, letting $r\to\infty$ proves the result.
\end{proof}

\subsection{Convergence of the linear equation}

Recall that in Theorem \ref{thm:main},  we choose $\eta<0<\kappa$.
Denote $P_t=e^{t\Delta}$ and, as earlier, $P_t^\e=e^{t\Delta_\e}$.
As in the discussion following Assumption~\ref{asmpt:v},
for $i=1,2,3$, write
\begin{equation*}
\Psi_i(t)=P_t\theta_i(0)+\widetilde\Psi_i(t)\;,
\qquad
\Psi_i^\e(t)=P_t^\e\theta_i^\e(0)+\widetilde\Psi_i^\e(t)\;,
\end{equation*}
where
\begin{equation}\label{eq:tilde_Psi_def}
\widetilde\Psi_i(t)=\int_0^t P_{t-s}\xi_i(s)\,ds\;,
\qquad
\widetilde\Psi_i^\e(t)=\int_0^t P_{t-s}^\e\xi_i^\e(s)\,ds\;.
\end{equation}

All statements below are understood componentwise, with the norm of a $1$-form given by the maximum over its three components as in Remark~\ref{rem:scalar-vector-convention}.

\begin{lemma}
\label{lem:initial-semigroup-convergence}
Denote
\[
    \alpha=-\frac12-\kappa\;,
    \qquad
    \gamma=\Big(\eta+\frac12\Big)\wedge 0\;.
\]
Suppose that, as $\eps\downarrow0$,
\[
    \sup_{\e\in(0,1]}
    \|f^\e\|_{\CC^\eta_\e}
    +
    \|f\|_{\CC^\eta}
    <\infty,
    \qquad
    \|f^\e;f\|_{\CC^\eta_\e}\to0.
\]
Then, for every $T>0$, as $\eps\downarrow0$,
\[
    \|P^\e_\cdot f^\e;P_\cdot f\|
    _{\CC^{\alpha,T}_{\gamma,\e}}
    \to0 .
\]
\end{lemma}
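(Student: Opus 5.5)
Fix $t\in(0,T]$. By the definition of the norm, we need to show
\[
\sup_{t\in(0,T]}(t^{1/2}\wedge1)^{-\gamma}\,\|P^\e_t f^\e;P_t f\|_{\CC^\alpha_\e}\to0 .
\]
We split the difference as
\[
P^\e_t f^\e - P_t f = P^\e_t(f^\e - \tilde f) + (P^\e_t\tilde f-P_t f),
\]
where $\tilde f$ is a suitable comparison with $f$. It is cleaner to work directly with the two-sided pairing instead. For a test function $\varphi^\lambda_z$ with $\lambda\in[\e,1]$, write
\[
\langle P^\e_t f^\e,\varphi^\lambda_z\rangle_\e-\langle P_t f,\varphi^\lambda_z\rangle
=\langle f^\e,P^\e_t\varphi^\lambda_z\rangle_\e-\langle f,P_t\varphi^\lambda_z\rangle,
\]
using self-adjointness of the heat semigroups. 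The function $P_t\varphi^\lambda_z$ is a smooth bump at scale $\mu=\lambda\vee\sqrt t$ with amplitude $\lambda^{-3}\wedge t^{-3/2}$. It can be written as a superposition of rescaled test functions at scale $\mu$, or, for $t$ large, bounded by constant times smooth test functions at scale $1$. This should yield
\[
|\langle f^\e,P^\e_t\varphi^\lambda_z\rangle_\e-\langle f,P_t\varphi^\lambda_z\rangle|
\lesssim \mu^{\eta}\,\|f^\e;f\|_{\CC^\eta_\e}+E_\e(t,\lambda),
\]
where $E_\e$ accounts for $P^\e_t\varphi^\lambda_z\ne P_t\varphi^\lambda_z$ restricted to the lattice. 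The main ingredient here is the standard discrete–continuum heat kernel comparison: $P^\e_t-P_t$ acting on smooth functions at scale $\mu$ costs a factor $(\e/\mu)^{\theta}$ for some $\theta\in(0,1]$ (cf. \cite{HM18}). This gives $|E_\e|\lesssim (\e/\mu)^{\theta}\mu^{\eta}\sup_\e(\|f^\e\|_{\CC^\eta_\e}+\|f\|_{\CC^\eta})$.

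It remains to check the weights. We need
\[
\lambda^{1/2+\kappa}(t^{1/2}\wedge1)^{-\gamma}\mu^{\eta}\lesssim 1
\]
uniformly, with a little room to spare in order to interpolate. If $\eta\ge-1/2$, then $\gamma=0$. In this case $\lambda^{1/2+\kappa}\mu^\eta\le\mu^{1/2+\kappa+\eta}\le1$, with slack $\mu^{\kappa}$. If $\eta<-1/2$, then $\gamma=\eta+1/2$. Using $\mu\ge\lambda$ and $\mu\ge\sqrt t$,
\[
\lambda^{1/2+\kappa}\mu^{\eta}\le\mu^{1/2+\kappa+\eta}\le (t^{1/2}\wedge1)^{\eta+1/2}\mu^{\kappa},
\]
which matches the weight exactly, again with a spare $\mu^\kappa$. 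The large-time regime $t\ge1$ is handled by the exponential decay of nonconstant modes together with the trivial control of the constant mode.

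Consequently, the first term is $\lesssim\|f^\e;f\|_{\CC^\eta_\e}\to0$. For the lattice error, the spare factor $(\e/\mu)^\theta\le(\e/\lambda)^{\theta}$ is not small when $\lambda\sim\e$, and this is the main obstacle. I would handle that regime differently. When $\lambda\le\e^{1/2}$, I estimate the continuum and discrete pieces separately, each bounded by $\mu^{\eta}$ times the uniform bound. I then use the slack $\mu^{\kappa}\le(\lambda\vee\sqrt t)^\kappa$. This is small unless $t$ is of order one, and in that case $(\e/\mu)^\theta\le\e^\theta$ is small. Combining the two regimes gives a bound $\lesssim \e^{c}$ for some $c>0$, times the uniform norms. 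If the slack $\mu^\kappa$ does not suffice when $\sqrt t\gtrsim\lambda\sim\e$, I would instead trade part of $\kappa$: compare in the weaker norm $\CC^{\alpha-\kappa/2}$ and interpolate with the uniform $\CC^{\alpha+\kappa/2}$-type bound given by Lemma~\ref{lem:eps-improve-reg} and Schauder. Taking the supremum over $\varphi,z,\lambda,t$ concludes the proof.
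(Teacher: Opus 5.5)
Your route is viable in outline but different from the paper's: you move $P^\varepsilon_t$, $P_t$ onto the test function and compare $P^\varepsilon_t\varphi^\lambda_z$ with $P_t\varphi^\lambda_z$. Your handling of the main term $\langle f^\varepsilon,P_t\varphi^\lambda_z\rangle_\varepsilon-\langle f,P_t\varphi^\lambda_z\rangle$ is fine. The gap is that the lattice-error bound $|E_\varepsilon|\lesssim(\varepsilon/\mu)^\theta\mu^\eta$ is false, and your argument depends on it.

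\textbf{Why the bound fails.} The function being propagated is $\varphi^\lambda_z$, which lives at scale $\lambda$, not $\mu$. The two semigroups also conserve different masses: the Riemann sum $\varepsilon^3\sum_x\varphi^\lambda_z(x)$ versus $\int\varphi^\lambda_z$. These differ by $O(\varepsilon/\lambda)$, which is of order one when $\lambda\sim\varepsilon$. For $\lambda=\varepsilon$ the support of $\varphi^\lambda_z$ meets the lattice only at $z$, so the discrete mass is $\varphi(0)$ while the continuum mass is $\int\varphi$. Hence for $t\gg\lambda^2$ one has $(P^\varepsilon_t-P_t)\varphi^\lambda_z\approx(\varphi(0)-\int\varphi)\,p_t(\cdot-z)$. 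This is an order-one multiple of a bump at scale $\mu=\sqrt t$, so $E_\varepsilon$ is genuinely of size $\mu^\eta$ (e.g.\ for $f$ with an $|x-z|^\eta$ singularity). The honest bound is $|E_\varepsilon|\lesssim(\varepsilon/\lambda)^\theta\mu^\eta$.

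\textbf{Where the proof breaks.} With the correct bound, in the regime $\lambda\sim\varepsilon$, $t\sim1$ neither of your small factors is small: $\mu^\kappa\sim1$ and $(\varepsilon/\lambda)^\theta\sim1$. Your treatment of $\lambda\le\varepsilon^{1/2}$ disposes of exactly this case by appealing to ``$(\varepsilon/\mu)^\theta\le\varepsilon^\theta$'', i.e.\ to the false bound. With your bound there would in fact be no obstacle at all, since $(\varepsilon/\mu)^\theta\mu^\kappa\le\varepsilon^{\theta\wedge\kappa}$ on $[\varepsilon,1]$. The difficulty you sensed is real, but it comes from $(\varepsilon/\lambda)^\theta$.

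\textbf{The fix.} What saves the lemma is the factor you discard when you write $\lambda^{1/2+\kappa}\mu^\eta\le\mu^{1/2+\kappa+\eta}$. Keeping it, for $t\le1$,
\[
\lambda^{\frac12+\kappa}(t^{1/2}\wedge1)^{-\gamma}\mu^\eta\le(\lambda/\mu)^{\frac12+\kappa}\mu^\kappa .
\]
So the weighted lattice error is at most $(\varepsilon/\lambda)^\theta(\lambda/\mu)^{\frac12+\kappa}\mu^\kappa\le(\varepsilon/\mu)^{\theta'}\mu^\kappa\le\varepsilon^{\theta'\wedge\kappa}$, where $\theta'=\theta\wedge(\frac12+\kappa)$. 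For $t\ge1$ it is simply $\lambda^{\frac12+\kappa}(\varepsilon/\lambda)^\theta\le\varepsilon^{\theta'}$.

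Equivalently, within your split:
\begin{itemize}
\item For $\lambda\le\varepsilon^{1/2}$, estimating the two pieces separately already gives $(\lambda/\mu)^{\frac12+\kappa}\mu^\kappa\le\lambda^\kappa\le\varepsilon^{\kappa/2}$ uniformly in $t$. This is what your ``uniform $\mathcal{C}^{\alpha+\kappa/2}$ bound via Schauder'' fallback really provides, so it should be the argument rather than a contingency.
\item For $\lambda\ge\varepsilon^{1/2}$, the correct factor satisfies $(\varepsilon/\lambda)^\theta\le\varepsilon^{\theta/2}$.
\end{itemize}

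\textbf{What remains to be proved.} You still need the comparison estimate itself. For instance, via Duhamel, $\int_0^tP^\varepsilon_{t-s}(\Delta_\varepsilon-\Delta)P_s\varphi^\lambda_z\,ds$; this needs test functions smoother than the $\mathcal{C}^1$ class in the definition, hence equivalence of the norms for larger $r$. You also need to justify pairing the lattice function $(P^\varepsilon_t-P_t)\varphi^\lambda_z$ against $f^\varepsilon\in\mathcal{C}^\eta_\varepsilon$.

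\textbf{Comparison with the paper.} The paper avoids all of this. It mollifies $f^\varepsilon$ and $f$ at a fixed scale $\bar\varepsilon$ and controls the mollification error uniformly in $\varepsilon$ by Schauder, using $\eta-\alpha-\gamma\ge\kappa$. It then compares discrete and continuum heat flows only for smooth data. Your repaired route would in exchange be quantitative.
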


\begin{proof}
Recall the discrete and continuous Schauder estimates, uniformly in
$\e$,
\begin{equation}\label{eq:Schauder}
    \sup_{t\in(0,T]}
    (t^{1/2}\wedge1)^{-\gamma}
    \|P_t^\e f^\e\|_{\CC^\alpha_\e}
    \lesssim_T
    \|f^\e\|_{\CC^{\alpha+\gamma}_\e},
\end{equation}
and analogously for $P_t f$.

Note that
\[
\eta - \alpha - \gamma \geq \kappa>0\;.
\]
Mollifying $f^\e$ and $f$ at scale $\bar\e\in(\e,1]$ yields
$f_{\bar\e}^\e \in \CC^{\eta}_\e$ and smooth $f_{\bar\e}\in\CC^\eta$ satisfying, uniformly in $\e\in (0,\bar\e)$,
\begin{equation}\label{eq:mollification_error}
    \|f^\e-f_{\bar\e}^\e\|_{\CC^{\alpha+\gamma}_\e}
    \lesssim
    \bar\e^{\eta-\alpha-\gamma} \norm{f^\e}_{\CC^\eta_\e},
    \quad
    \|f-f_{\bar\e}\|_{\CC^{\alpha+\gamma}}
    \lesssim
    \bar\e^{\eta-\alpha-\gamma} \norm{f}_{\CC^\eta}
    \;.
\end{equation}
These are standard mollification estimates; for the second bound, writing $\rho_{\bar\e}$ for the mollifier, we have  $\langle f-f_{\bar\e},\phi_z^\lambda\rangle=\int \rho_{\bar\e} (y) \langle f,\phi_z^\lambda-\phi_{z-y}^\lambda\rangle dy$. If $\lambda\le \bar\e$ we bound the two terms separately by $\lesssim \lambda^\eta$. If $\lambda\ge \bar\e$ , the difference of test functions is of size $O(\bar\e/\lambda)$.  In either case, the resulting expression is bounded by $ \bar\e^{\eta-\alpha-\gamma} \lambda^{\alpha+\gamma}\norm{f}_{\CC^\eta}$.
The first (discrete) estimate is proved identically, with the integral replaced  by the normalized sum.

For fixed $\bar\e>0$,
the convergence $\|f^\e;f\|_{\CC^\eta_\e}\to0$ implies
\[
\lim_{\e\downarrow0}\sup_{x\in\T^3_\e}|f^\e_{\bar\e}(x) - f_{\bar\e}(x)| = 0\;.
\]
Therefore, standard convergence of the
centered finite-difference approximation of the heat equation implies
\[
  \lim_{\eps\downarrow0}  \|P_\cdot^\e f_{\bar\e}^\e;P_\cdot f_{\bar\e}\|
    _{\CC^{\alpha,T}_{\gamma,\e}} = 0.
\]
(The precise choices of $\alpha$ and $\gamma$ are not important for this step as long as $\alpha<0$ and $\gamma\leq0$.
Indeed, for a smooth function $u$, the Riemann-sum error against a test function $\varphi_z^\lambda$ is bounded by $C_u\e\lambda^{-1}$, so after multiplication by $\lambda^{-\alpha}$ with $\alpha\in (-1,0)$ and using $\lambda\in[\e,1]$, it is bounded by $C_u \e^{-\alpha}$, which tends to zero since $\alpha<0$.)

On the other hand, \eqref{eq:Schauder} and \eqref{eq:mollification_error} imply
that, uniformly in $\e<\bar\e$,
\[
 \|P_\cdot^\e f_{\bar\e}^\e - P_\cdot^\e f^\e\|
    _{\CC^{\alpha,T}_{\gamma,\e}} \lesssim \|f^\eps-f^\eps_{\bar\e}\|_{\CC^{\alpha+\gamma}_\e}
    \lesssim \bar\e^{\eta-\alpha-\gamma}\;,
\]
and similarly for $f$ and $f_{\bar\e}$. Since $\eta-\alpha-\gamma>0$, the right-hand side can be made arbitrarily small by choosing $\bar\e$ sufficiently small.

By the definition of the $\CC^{\alpha,T}_{\gamma,\e}$ norm, we have
\[
\|P^\e_\cdot f^\e;P_\cdot f\|_{\CC^{\alpha,T}_{\gamma,\e}}
\leq
 \|P_\cdot^\e f_{\bar\e}^\e - P_\cdot^\e f^\e\|_{\CC^{\alpha,T}_{\gamma,\e}}
 + \|P_\cdot^\e f_{\bar\e}^\e;P_\cdot f_{\bar\e}\|_{\CC^{\alpha,T}_{\gamma,\e}}
 + \|P_\cdot f_{\bar\e} - P_\cdot f\|_{\CC^{\alpha,T}_{\gamma}}.
\]
Choose $\bar\e>0$ small so that the first and third terms are small uniformly in $\e$. Then let $\eps\downarrow0$ so that the middle term is small. This completes the proof.
\end{proof}

\begin{lemma}
\label{lem:stochastic-convolution-convergence}
Consider $\kappa>0$, $T>0$, and $\gamma\le0$.
Then
\[
\|\widetilde\Psi^\e;\widetilde\Psi\|_{\CC^{-1/2-\kappa,T}_{\gamma,\e}}
\to 0
\qquad\text{in probability.}
\]
\end{lemma}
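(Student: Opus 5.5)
Since $\gamma\le 0$ and $(t^{1/2}\wedge 1)^{-\gamma}\le 1$ on $(0,T]$, the weighted distance is dominated by the unweighted one. It therefore suffices to prove
\[
\sup_{t\in[0,T]}\|\widetilde\Psi^\e(t);\widetilde\Psi(t)\|_{\CC^{-1/2-\kappa}_\e}\to0
\]
in probability. The plan is the standard Gaussian one: a second-moment bound on a single test function that carries a positive power of $\e$, upgraded to a uniform statement by hypercontractivity, the wavelet characterisation and Kolmogorov, exactly as in the proof of Lemma~\ref{lm:ProbEstimate}.

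\textbf{Coupling and second moment.} Both processes are driven by the same white noise $\xi_i$, since $\xi^\e_i$ is built from $\xi_i$ through \eqref{eq:Disc_white_noise}. For a test function $\varphi^\lambda_z$ with $\lambda\in[\e,1]$, the difference
\[
\langle\widetilde\Psi^\e_i(t),\varphi^\lambda_z\rangle_\e-\langle\widetilde\Psi_i(t),\varphi^\lambda_z\rangle
\]
is therefore a first-chaos variable $\int_0^t\int_{\T^3}K^{\e}_{t-s}(y)\,\xi_i(s,y)\,dy\,ds$. The kernel is
\[
K^\e_{u}(y)=\sum_{e}\e^3\,\bigl(P^\e_u\varphi^\lambda_z\bigr)(e)\,\mathbf 1_{B(e,\e)}(y)-\bigl(P_u\varphi^\lambda_z\bigr)(y),
\]
using symmetry of the heat kernels. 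Its variance is $\int_0^t\|K^\e_u\|_{L^2}^2\,du$. I will aim to prove
\[
\int_0^t\|K^\e_u\|_{L^2}^2du\lesssim \e^{2\delta}\lambda^{-1-2\kappa+2\delta}
\]
for some small $\delta\in(0,\kappa)$. For this I would interpolate between two estimates. The first is the trivial bound, where each term separately is $\lesssim\lambda^{-1}$. This follows from the heat kernel bound \eqref{e:P-bound} and $\int_0^\infty\|P_u\varphi^\lambda\|_{L^2}^2du\lesssim\lambda^{-1}$ in $d=3$. The second is a consistency bound of order $(\e/\lambda)^{2}\lambda^{-1}$ up to logs. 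It comes from splitting $K^\e$ into a Riemann-sum/piecewise-constant error and the difference $P^\e_u-P_u$ acting on a smooth function at scale $\lambda$. In Fourier variables, $|e^{-u|k|_\e^2}-e^{-u|k|^2}|\lesssim u\e^2|k|^4e^{-cu|k|^2}$ for $|k|\le\pi/\e$, and $\hat\varphi^\lambda$ is concentrated at $|k|\lesssim\lambda^{-1}$.

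\textbf{Time increments and supremum.} Similarly, I will bound the time increments by
\[
\E|\langle \widetilde\Psi^\e(t)-\widetilde\Psi^\e(s),\varphi^\lambda_z\rangle_\e|^2\lesssim|t-s|^{\kappa}\lambda^{-1-2\kappa},
\]
and likewise for the continuum process, using the standard estimate $\|(P_h-1)\varphi^\lambda\|\lesssim (h/\lambda^2)^{\kappa/2}$. Interpolating with the consistency bound gives $\e^{\tilde\delta}|t-s|^{\tilde\delta}\lambda^{-1/2-\kappa'}$ for some $\kappa'<\kappa$. Gaussianity gives all $p$-th moments. The wavelet characterisation of $\CC^{-1/2-\kappa}_\e$ as in \cite{HM18} and the Kolmogorov criterion \cite[Theorem A.10]{Friz_Victoir_2010} then yield
\[
\E\sup_{t\le T}\|\widetilde\Psi^\e(t);\widetilde\Psi(t)\|^p_{\CC^{-1/2-\kappa}_\e}\lesssim\e^{p\tilde\delta}.
\]
Markov's inequality concludes, after relabelling $\kappa$ as in Lemma~\ref{lm:ProbEstimate}. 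The spatially constant Fourier mode needs no separate treatment: it is identical in both processes up to the $\e$-averaging of the noise, which is exact on the zero mode.

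\textbf{Main obstacle.} The hard step is the consistency estimate, specifically obtaining a gain $\e^{\delta}$ uniformly down to $\lambda=\e$ while handling the mixed discrete/continuum pairing. At $\lambda\sim\e$ there is no gain, which is why it must be interpolated against the trivial bound and paid for with a little regularity. I would first try to do everything in Fourier space on $\T^3$, extending discrete fields as piecewise constant on the cubes $B(e,\e)$. Alternatively, one can directly cite the standard convergence of discrete stochastic heat equations, e.g.\ \cite[Section~4--5]{HM18} or \cite{chevyrev2023invariant}, of which this lemma is a linear special case.
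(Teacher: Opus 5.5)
Your proposal is correct in outline, but it follows a different route from the paper.

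\textbf{The paper's route.} The paper never estimates the discrete--continuum difference driven by the rough noise directly. It mollifies $\xi$ at an intermediate scale $\bar\e$, using cell averages $\psi^{\bar\e,\e}$ on the lattice, and splits the distance into three terms:
\begin{itemize}
\item $\widetilde\Psi^\e-\widetilde\Psi^{\bar\e,\e}$, a purely discrete Gaussian estimate, bounded uniformly in $\e<\bar\e$ via It\^o isometry, \cite[Lemma~7.6]{HM18} and Kolmogorov;
\item $\widetilde\Psi-\widetilde\Psi^{\bar\e,0}$, the purely continuum analogue, bounded the same way;
\item for fixed $\bar\e$, the comparison with smooth forcings, handled deterministically by uniform convergence of the forcings, finite-difference consistency and the elementary Riemann-sum bound $C_u\e\lambda^{-1}$.
\end{itemize}
It then sends $\e\to0$ before $\bar\e\to0$. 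This avoids any consistency estimate for the rough kernel, and it applies Kolmogorov/wavelet arguments only to each process separately, never to the mixed distance. The price is that it gives no rate.

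\textbf{Your route.} Your direct coupling computation yields a quantitative bound $\E\sup_{t\le T}\|\widetilde\Psi^\e(t);\widetilde\Psi(t)\|^p_{\CC^{-1/2-\kappa}_\e}\lesssim\e^{p\tilde\delta}$. In exchange you must carry out two things.
\begin{itemize}
\item The consistency estimate. Your Fourier sketch works: the symbol error inside the Brillouin zone contributes $O(\e^4\lambda^{-5})$, and the piecewise-constant extension/aliasing contributes $O(\e^2\|\varphi^\lambda\|_{L^2}^2)=O((\e/\lambda)^2\lambda^{-1})$.
\item A Kolmogorov/wavelet step for the difference distribution $\e^3\sum_e\widetilde\Psi^\e(t,e)\delta_{m(e)}-\widetilde\Psi(t)$, which is only controlled at scales $\lambda\ge\e$. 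This is the kind of criterion used for convergence of discrete Gaussian models in \cite{HM18}, and it needs slightly more care than the purely discrete version invoked in Lemma~\ref{lm:ProbEstimate}. For example, the scales $\lambda\le\e^{1-\theta}$ can be absorbed using the uniform bounds on each process in a slightly better space.
\end{itemize}

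\textbf{Minor slips.}
\begin{itemize}
\item The kernel is $\sum_e(P^\e_u\varphi^\lambda_z)(e)\mathbf 1_{B(e,\e)}$, without the factor $\e^3$.
\item $\int_0^\infty\|P_u\varphi^\lambda\|_{L^2}^2\,du$ diverges on $\T^3$ because of the zero mode; integrate over $[0,T]$ to get $\lesssim_T\lambda^{-1}$.
\item Your target $\e^{2\delta}\lambda^{-1-2\kappa+2\delta}$ is compatible with the trivial bound at $\lambda=\e$ only if $\delta\le\kappa/2$.
\end{itemize}
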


\begin{proof}
Denote by $\star$, $\star_\e$ the convolutions on $\R \times \T^3$ and $\R \times \T^3_\e$ respectively.
Recall the coupling in \eqref{eq:Disc_white_noise}. Let $\psi\in\CC_c^\infty(\R\times\R^3)$ be a parabolic mollifier of integral $1$, and set
\[
\psi^{\bar\e}(t,x)
=\bar\e^{-5}\psi(\bar\e^{-2}t,\bar\e^{-1}x),
\qquad
\xi^{\bar\e,0}=\xi\star\psi^{\bar\e}.
\]
For $x\in\T_\e^3$, define the cell $Q_x^\e=x+[-\e/2,\e/2)^3$ in $\T^3$ and the averages
\begin{equ}
\label{eq:DiscreteMollifier}
\psi^{\bar\e,\e}(t,x)
=\e^{-3}\int_{Q_x^\e}\psi^{\bar\e}(t,y)\,dy,
\qquad
\xi^{\bar\e,\e}=\xi^\e\star_\e\psi^{\bar\e,\e}.
\end{equ}
The cells $\{Q_x^\e\}_{x\in \T^3_\e}$ form a partition, so
\[
\int_{\R\times\T_\e^3}\psi^{\bar\e,\e}(t,x)\,dt\,dx=1\;.
\]
Let $\widetilde\Psi^{\bar\e,0}$ and $\widetilde\Psi^{\bar\e,\e}$ denote the stochastic convolutions as in \eqref{eq:tilde_Psi_def} but driven by $\xi^{\bar\e,0}$ and $\xi^{\bar\e,\e}$ respectively, both with zero initial data.

A standard argument using It\^o isometry, the mollification estimate
\cite[Lemma~7.6]{HM18},
and a Kolmogorov bound,
implies that there exist $p\ge2$ and $q>0$ such that, uniformly in $0<\e<\bar\e\leq 1$,
\begin{align}
\label{eq:linear-mollification-errors}
\E\|\widetilde\Psi^\e-\widetilde\Psi^{\bar\e,\e}\|_{L^\infty([0,T],\CC^{-1/2-\kappa}_\e)}^p
&\lesssim_T \bar\e^{qp}\;,
\\
\E\|\widetilde\Psi-\widetilde\Psi^{\bar\e,0}\|_{L^\infty([0,T],\CC^{-1/2-\kappa})}^p
&\lesssim_T \bar\e^{qp}\;.
\end{align}
For every fixed $\bar\e>0$, the noises are smooth and
\[
\lim_{\eps\downarrow0}\sup_{|t|<R}\sup_{x\in\T^3_\e}|\xi^{\bar\e,\e}(t,x) - \xi^{\bar\e,0}(t,x)| =0
\]
in probability for any $R>0$ (see e.g. the proof of \cite[Lemma 7.4]{chevyrev2023invariant} for more details).
Thus, standard results in numerical analysis, together with the
Riemann-sum estimate used in the proof of
Lemma~\ref{lem:initial-semigroup-convergence}, imply that
\begin{equ}
\label{eq:smooth-linear-consistency}
\sup_{t \in [0,T]}\|\widetilde\Psi^{\bar\e,\e}(t);\widetilde\Psi^{\bar\e,0}(t)\|_{\CC^{-1/2-\kappa}_\e}
\to 0
\qquad\text{in probability as }\e\to0.
\end{equ}
Since $\gamma\leq 0$, we obtain
\begin{align*}
\|\widetilde\Psi^\e;\widetilde\Psi\|_{\CC^{-1/2-\kappa,T}_{\gamma,\e}}
&\le
\|\widetilde\Psi^\e-\widetilde\Psi^{\bar\e,\e}\|_{L^\infty([0,T],\CC^{-1/2-\kappa}_\e)}
\\
&\quad+
\sup_{t \in [0,T]}\|\widetilde\Psi^{\bar\e,\e}(t);\widetilde\Psi^{\bar\e,0}(t)\|_{\CC^{-1/2-\kappa}_\e}
+
\|\widetilde\Psi^{\bar\e,0}-\widetilde\Psi\|_{L^\infty([0,T],\CC^{-1/2-\kappa})}.
\end{align*}
Now choose $\bar\e$ small so that the first and third terms are small in probability uniformly in $\e<\bar\e$ by \eqref{eq:linear-mollification-errors}, and then let $\e\to0$ so that the middle term goes to zero by \eqref{eq:smooth-linear-consistency}. This proves the claim.
\end{proof}

\begin{theorem}
\label{theo:Conv_SHE}
Let $-\frac23<\eta<0$, fix $\kappa>0$ sufficiently small, and set
\[
\gamma=\bigl(\eta+\tfrac12\bigr)\wedge 0\;.
\]
Suppose that there is a deterministic $M<\infty$ such that, almost surely,
\[
\sup_{0<\e\le1}\|\theta^\e(0)\|_{\CC^\eta_\e}
+\|\theta(0)\|_{\CC^\eta}
\le M,
\]
and suppose that
\[
\|\theta^\e(0);\theta(0)\|_{\CC^\eta_\e}\to 0
\qquad\text{in probability}.
\]
Then, for every $T>0$ and $\delta>0$,
\[
\lim_{\e\to0}
\P\left(
\|\Psi^\e;\Psi\|_{\CC^{-1/2-\kappa,T}_{\gamma,\e}}>\delta
\right)=0.
\]
\end{theorem}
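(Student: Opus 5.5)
The plan is to split $\Psi^\e$ and $\Psi$ into their initial-data and noise parts and treat each separately. By the triangle inequality for the distance $\|\cdot;\cdot\|_{\CC^{-1/2-\kappa,T}_{\gamma,\e}}$ (which is a supremum over test functions of differences of pairings, hence subadditive in the pair), one has
\[
\|\Psi^\e;\Psi\|_{\CC^{-1/2-\kappa,T}_{\gamma,\e}}
\le
\|P^\e_\cdot\theta^\e(0);P_\cdot\theta(0)\|_{\CC^{-1/2-\kappa,T}_{\gamma,\e}}
+\|\widetilde\Psi^\e;\widetilde\Psi\|_{\CC^{-1/2-\kappa,T}_{\gamma,\e}}\;,
\]
componentwise for $i=1,2,3$, with the maximum over components as in Remark~\ref{rem:scalar-vector-convention}. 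It then suffices to show that each term tends to zero in probability, since the event $\{\|\Psi^\e;\Psi\|>\delta\}$ is contained in the union of the events where one of the two terms exceeds $\delta/2$.

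The stochastic term is handled directly by Lemma~\ref{lem:stochastic-convolution-convergence} with $\gamma=(\eta+\frac12)\wedge0\le 0$. For the initial-data term, Lemma~\ref{lem:initial-semigroup-convergence} is deterministic and assumes $\|f^\e;f\|_{\CC^\eta_\e}\to0$, while here this convergence holds only in probability. I would therefore pass through subsequences. Take any sequence $\e_n\to0$. Extract a further subsequence along which $\|\theta^{\e_n}(0);\theta(0)\|_{\CC^\eta_{\e_n}}\to0$ almost surely. On the full-measure event where this holds, the deterministic bound $\le M$ lets us apply Lemma~\ref{lem:initial-semigroup-convergence} pathwise with $f^\e=\theta^\e(0)$ and $f=\theta(0)$. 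This gives almost-sure, hence in-probability, convergence of the initial-data term along the sub-subsequence. Since every sequence has such a sub-subsequence, convergence in probability holds along the full family $\e\to0$.

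One point needs checking: the proof of Lemma~\ref{lem:initial-semigroup-convergence} uses $\eta-\alpha-\gamma\ge\kappa>0$ with $\alpha=-\frac12-\kappa$. If $\eta\le-\frac12$, then $\gamma=\eta+\frac12$ and $\eta-\alpha-\gamma=\kappa$. Otherwise $\gamma=0$ and $\eta-\alpha=\eta+\frac12+\kappa>\kappa$. So the lemma applies on the whole range $-\frac23<\eta<0$, and the lower bound on $\eta$ plays no role in this step.

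The only genuinely delicate step is this measurability and subsequence argument for random initial data. The constants in Lemma~\ref{lem:initial-semigroup-convergence} depend only on $M$, $T$ and the mollification scale $\bar\e$, not on $\omega$, so the pathwise application is legitimate. Combining the two terms then yields the claim.
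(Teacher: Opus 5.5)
Your proposal is correct and follows the paper's proof: the same triangle-inequality split into the initial-data term, handled by Lemma~\ref{lem:initial-semigroup-convergence}, and the stochastic-convolution term, handled by Lemma~\ref{lem:stochastic-convolution-convergence}. Your sub-subsequence argument makes explicit the passage from the deterministic lemma to convergence in probability, a step the paper leaves implicit.
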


\begin{proof}
Using the decomposition into initial condition terms and stochastic convolutions,
\begin{equation*}
\|\Psi^\e;\Psi\|_{\CC^{-1/2-\kappa,T}_{\gamma,\e}}
\le
\|P_\cdot^\e\theta^\e(0);P_\cdot\theta(0)\|_{\CC^{-1/2-\kappa,T}_{\gamma,\e}}
+
\|\widetilde\Psi^\e;\widetilde\Psi\|_{\CC^{-1/2-\kappa,T}_{\gamma,\e}}.
\end{equation*}
Lemma~\ref{lem:initial-semigroup-convergence}, together with the uniform bound by $M$ and the assumed convergence of the initial conditions yields that the first term converges to zero in probability. The second term converges to zero in probability by Lemma~\ref{lem:stochastic-convolution-convergence}.
\end{proof}

\begin{proof}[of Theorem~\ref{thm:main}]
On the event $\{\tau_\e>T\}$, the original solution agrees on $[0,T]$ with the globally defined process $\widehat \theta^\e$ and thus
\[
\theta^\e=\Psi^\e+\widehat v^\e.
\]
Since $4\kappa>-\frac12-\kappa$ and $\gamma\le0$, the spatial embedding and the definition of the weighted norm give a constant $C_T$, independent of $\e$, such that
\[
\|\widehat v^\e\|_{\CC^{-1/2-\kappa,T}_{\gamma,\e}}
\le C_T\|\widehat v^\e\|_{\CC^{4\kappa,T}_{0,\e}}.
\]
Consequently, for every $\delta>0$,
\begin{align*}
&\left\{\tau_\e\le T\right\}
\cup
\left\{\tau_\e>T,
\ \|\theta^\e;\theta\|_{\CC^{-1/2-\kappa,T}_{\gamma,\e}}>\delta\right\}
\\
&\quad\subset
\left\{\tau_\e\le T\right\}
\cup
\left\{\|\widehat v^\e\|_{\CC^{4\kappa,T}_{0,\e}}>\frac\delta{2C_T}\right\}
\cup
\left\{\|\Psi^\e;\Psi\|_{\CC^{-1/2-\kappa,T}_{\gamma,\e}}>\frac\delta2\right\}.
\end{align*}
The probability of the first two events tends to zero by Corollary~\ref{cor:remainder-and-exit}, and the probability of the third tends to zero by Theorem~\ref{theo:Conv_SHE}.
\end{proof}

\subsection*{Acknowledgements}
I.C. acknowledges support from the European Research Council (ERC) via the Starting Grant SQGT 101116964. 
H.S. acknowledges support from the National Science Foundation via CAREER DMS-2044415.
This work began in the Program ``Probabilistic methods in quantum field theory'' during summer 2025 at the Hausdorff Research Institute for Mathematics, funded by the Deutsche Forschungsgemeinschaft (DFG, German Research Foundation) under Germany's Excellence Strategy -- EXC-2047/1 -- 390685813.
\medskip

\noindent
For the purpose of open access, the authors have applied a CC BY public copyright licence to any author accepted manuscript arising from this submission.

\bibliographystyle{./Martin}
\bibliography{./refs}
\end{document}